\numberwithin{equation}{section}
\newcommand\mf {\mathfrak }
\newcommand\mc{\mathcal}
\newcommand\wh{\widehat}
\newcommand\wt{\widetilde}
\newcommand\ab{\mathrm{ab}}
\newcommand\al{\alpha}
\newcommand\be{\beta}
\newcommand\ga{\gamma}
\newcommand\de{\delta}
\newcommand\ganz{\mathbb Z} 
\newcommand\real{\mathbb R}
\newcommand\conv{\mathrm{Conv}}
\newcommand\codim{\mathrm{codim}\,}
\newcommand\stab{\mathrm{Stab}}
\newcommand\vol{\mathrm{Vol}}
\newcommand\gen{\mathrm{Span}}
\newcommand\supp{\mathrm{Supp}}
\renewcommand\){\right)}
\newtheorem*{thm*}{Theorem}
\newtheorem{thm}{Theorem}[section]
\newtheorem{pro}[thm]{Proposition}
\newtheorem{lem}[thm]{Lemma}
\newtheorem{cor}[thm]{Corollary}
\newtheorem*{cor*}{Corollary}
\renewcommand\ss{\scriptstyle}
\newcommand\la{\langle}
\newcommand\ra{\rangle}
\newcommand\ov{\overline}
\newcommand\un{\underline}
\theoremstyle{definition}
\theoremstyle{definition}\newtheorem{rem}{Remark}[section]
\theoremstyle{definition}
\theoremstyle{definition}\newtheorem{exa}{Example}[section]
\newcommand\Anfi
\newcommand\An
\newcommand\Bnf
\newcommand\Bn
\newcommand\Cnf
\newcommand\Cn
\newcommand\Dnf
\newcommand\Dn
\newcommand\Eseif
\newcommand\Esei
\newcommand\Esettef
\newcommand\Esette
\newcommand\Eottof
\newcommand\Eotto
\newcommand\Fquattrof
\newcommand\Fquattro
\newcommand\Gduef
\newcommand\Gdue
\title{Root polytopes and abelian ideals}
\author{Paola Cellini}
\address{Paola Cellini\\ Dipartimento di Ingegneria e Geologia\\ Universit\`a di Chieti -- Pescara\\ Viale Pindaro 42\\ 65127 Pescara\\ Italy}
\email{pcellini@unich.it}
\author{Mario Marietti}
\address{Mario Marietti\\ Dipartimento di Ingegneria Industriale e Scienze Matematiche\\ Universit\`a  Politecnica delle Marche\\ Via Brecce Bianche\\ 60131 Ancona\\ Italy}
\email{m.marietti@univpm.it}
\begin{document}

\begin{abstract}
We study the root polytope $\mathcal P_\Phi$ of a finite irreducible crystallographic root system $\Phi$ using its relation with the abelian ideals of a Borel subalgebra of a simple Lie algebra with root system $\Phi$. We determine the hyperplane arrangement corresponding to the faces of codimension 2 of $\mathcal P_\Phi$ and analyze its relation with the facets of $\mathcal P_\Phi$. For $\Phi$ of type $A_n$ or $C_n$, we show that the orbits of some special subsets of abelian ideals under the action of the Weyl group parametrize a triangulation of $\mathcal P_\Phi$. We show that this triangulation restricts to a triangulation of the positive root polytope $\mathcal P_\Phi^+$. 
\end{abstract}

\maketitle

\setlength{\baselineskip}{1.2\baselineskip}
\noindent
{\it Keywords:} Root system; Root polytope; Weyl group; Borel subalgebra;
Abelian ideal

\section{Introduction}
 Let $\Phi$ be a finite irreducible crystallographic root system in a Euclidean space, $\Pi$ a basis of $\Phi$, and $\Phi^+$ the corresponding set of positive roots. We denote by $\mc P_\Phi$, or simply by $\mc P$, the convex hull of $\Phi$ and call $\mc P$ the root polytope of $\Phi$. Moreover, we denote by  $\mc P^+_\Phi$, or  simply by $\mc P^+$, the convex hull of $\Phi^+\cup\{\un 0\}$, where $\un 0$ is the zero vector, and call $\mc P^+$ the positive root polytope of $\Phi$. 
The root polytope, the positive root polytope, and their triangulations have been studied by several authors such as in \cite{ABHPS}, \cite{GGP}, \cite{K1},
\cite{K2},  for some or all the classical root systems. In \cite{C-M}, we have
given a case free description of $\mc P$ for arbitrary $\Phi$. In particular,  we obtained a uniform explicit description of its faces, its $f$-polynomial, and a minimal set of linear inequalities that defines $\mc P$ as an intersection of half-spaces.  In  this paper, we develop our algebraic-combinatorial analysis of $\mc P$, mostly for
the types $A_n$ and $C_n$, taking into account also the positive root polytope. Our
analysis relies on the results of our previous paper which we
briefly recall. 
\par

Let $\mf  g$  be a complex simple Lie algebra with Cartan subalgebra $\mf h$ and corresponding root system $\Phi$, $\mf g_\al$ the root space of $\al$,  for all $\al\in \Phi$, and $\mf b$ the Borel subalgebra of $\mf g$ corresponding to $\Phi^+$. 
Moreover, let $W$ be the Weyl group of $\Phi$. It is clear that $W$ acts on the set of the faces of $\mc P$.  
In \cite{C-M} we showed that there is a natural bijection between the set of the orbits of this action and a certain set of abelian ideals of $\mf b$. 
\par

The abelian ideals of a Borel subalgebra of a complex simple Lie algebra are a
well studied subject. 
If $\mf a$ is a nontrivial abelian ideal of $\mf b$, then there exists a subset
$I_\mf a$ of $\Phi^+$  such that  $\mf a=\bigoplus_{\al\in I_\mf a} \mf g_\al$. 
If $I$ is a subset of $\Phi^+$, then $\bigoplus_{\al\in I} \mf g_\al$ is an abelian ideal of $\mf b$ if and only if $I$ is a filter  (or dual order ideal) of the poset $\Phi^+$ with respect to the usual order of the root lattice, and moreover, the sum of any two roots in $I$ is not a root.
We call a subset of $\Phi^+$ with these properties, by abuse of language, an abelian ideal of $\Phi^+$.
\par

In \cite{C-M}, we proved that each orbit of the action of $W$ on the set of the faces of $\mc P$ contains a distinguished representative, 
which is the convex hull of an abelian ideal of $\Phi^+$, and we determined
explicitly the abelian ideals corresponding to these faces of $\mc P$. 
Moreover, we proved that these abelian ideals are all principal i.e., of type
$M_\al:=\{\be\in \Phi^+\mid \be\geq \al\}$, for some $\al\in \Phi^+$. 
We call the faces corresponding to these abelian ideals the standard parabolic
faces. 
\par

In this work we continue our analysis of $\mc P$ in two independent directions: we study a hyperplane arrangement associated with the root polytope of any irreducible root system, and certain triangulations of type $A$ and $C$ root polytopes and positive root polytopes. 
The common point is the good behavior of the two types $A_n$ and $C_n$, with respect to the other types. 
\par

First, in Sections \ref{a hyp} and \ref{ac-si},  we analyze the central arrangement $\mc H_\Phi$ of all linear hyperplanes containing some codimension 2 face of $\mc P_\Phi$. 
Using the explicit description of the faces of $\mc P_\Phi$ given in \cite{C-M},
we determine the hyperplanes in $\mc H_\Phi$ and their 
orbit structure under the action of $W$, for all $\Phi$.  
\par

By construction, for each facet $F$ of $\mc P_\Phi$, the cone generated by $F$ is the closure of a union of regions of $\mc H_\Phi$.  
We show that, for the types $A_n$ and $C_n$, each cone on a facet is the closure of a single region of $\mc H_\Phi$, so that there is a natural bijection between the regions of $\mc H_\Phi$ and the facets of $\mc P_\Phi$. 
The analogous result does not hold for the types $B_n$ and $D_n$. 
 Moreover, in both the cases $A_n$ and $C_n$, $\mc H_\Phi$ is the orbit of $\omega_1^\perp$  under the action of $W$, where $\omega_1$ is the first fundamental weight.  
\par

In the rest of the paper we deal with our second topic, the triangulations of
the type $A_n$ and $C_n$ root polytopes and positive root polytopes. The main
special property of these types is that, in these cases, the abelian ideals
corresponding to the standard parabolic facets of $\mc P$ are exactly the
maximal abelian ideals in $\Phi^+$. 
\par

The poset of the abelian ideals of $\Phi^+$ with respect to inclusion is quite well known (see \cite{Pany}, \cite{Sut} and \cite{C-P-Adv} for results and motivations). 
In the study of it, a key role is played by some special subposets $\mc I_\ab(\al)$ (see Section \ref{ideals}), $\alpha$ any long simple root.
The poset structure of the sets $\mc I_\ab(\al)$ is well known. In particular, each $\mc I_\ab(\al)$ has a maximum $I(\al)$,  and the set of all $I(\al)$, with $\alpha$ long simple root, is the set of all maximal abelian ideals of $\Phi^+$.
For $\Phi$ of type $A_n$ or $C_n$,  for all long simple $\alpha$, the maximum $I(\al)$ is the principal ideal $M_\al$. By the results of \cite{C-M}, for these types, the standard parabolic facets are the convex hulls of these same $M_\al$, hence they naturally correspond to the maximal abelian ideals.
\par

One of our main results is that, for both the types  $A_n$ and $C_n$, the set $\mc I_\ab(\al)$ parametrizes in a natural way a triangulation of the standard parabolic facet corresponding to $M_\al$  (Theorem \ref{tricomune}). 
In both cases, to each abelian ideal $\mf i$ in $\mc I_\ab(\al)$,  we associate
a simplex $\sigma_\mf i$, in such a way that the resulting set of simplices
yields a triangulation of the standard parabolic facet corresponding to $M_\al$.
For each $\mf i$, the vertices of $\sigma_{\mf i}$ form a $\ganz$-basis of the root lattice. 
Since the short roots are never vertices of $\mc P$, in type $C_n$ there are vertices of $\sigma_{\mf i}$ that are not vertices of $\mc P$. 
The resulting triangulation inherits, in a certain sense, the poset structure of
$\mc I_\ab(\al)$. 
In fact, if $\mf j\in \mc I_\ab(\al)$, then the set of all simplices $\sigma_\mf i$ with $\mf i\subseteq \mf j$ yields a triangulation of the convex hull of $\mf j$. 
\par

If $F$ is a standard parabolic facet and $W_F$ is its stabilizer in $W$, we can
extend the triangulation of $F$ to its orbit by means of a system of
representatives of the set of left cosets  $W/W_F$. In this way, from our 
triangulations of the facets, we obtain a triangulation of the boundary of $\mc
P$.  If we choose, for all the facets,  the system of minimal length
representatives, we obtain the easiest triangulation, from a combinatorial point
of view. For $A_n$, this is the triangulation described, with a combinatorial
approach, in \cite{ABHPS}. 
\par

 If we  extend each $(n-1)$-simplex of this triangulation of the boundary of $\mc P$ to a $n$-simplex by adding the vertex $\un 0$, we obtain a triangulation of $\mc P$.   
We prove that this triangulation of $\mc P$ restricts to a triangulation of the positive root polytope $\mc P^+$ (Theorem \ref{indottagen}). 
This implies in particular that, for $\Phi$ of type $A_n$ or $C_n$, $\mc P^+$
is the intersection of $\mc P$ with the positive cone generated by the positive
roots, which is false in general. Our result has a direct application in the recent paper 
\cite{Ch} on partition functions.
\par

For type $A_n$, the triangulation obtained for $\mc P^+$ is the one 
corresponding to the antistandard bases of \cite{GGP}. For $C_n$, our triangulation of $\mc P^+$ does not coincide with the one of \cite{K2}, though the restrictions of both triangulations on the (unique) standard parabolic facet coincide. 
\par

From our construction, we also obtain simple proofs of known and less known results about the volumes of $\mc P$ and $\mc P^+$. In particular, we obtain an analogue of the curious identity of \cite{DC-P}.
\par

Our definitions and results on triangulations, as well as the results on the
relation between $\mc H_\Phi$ and $\mc P_\Phi$, work uniformly for $\Phi$ of
type $A_n$ or $C_n$. However, we have distinct proofs for the two cases. We
sometimes illustrate our proofs and results using the encoding of the poset
$\Phi^+$ as a Young (skew for type $C$) diagram \cite{Shi} that we recall in
Section \ref{ac-si}.
\par

The content of this paper is organized as follows. In Section \ref{pre}, we fix the notation and
recall the results that we most frequently use in the paper.
In Section \ref{a hyp}, we determine the hyperplane arrangement $\mc H_{\Phi}$
for all irreducible root systems $\Phi$, and in Section \ref{ac-si} we analyze the relation between $\mc H_{\Phi}$ and   $\mc P_{\Phi}$ for the classical types.
In Section \ref{idealiabeliani}, after some general preliminary results on the principal  dual order ideals of $\Phi$, we state our main results on triangulations of $\mc P_\Phi$ and $\mc P_\Phi^+$ for the types $A_n$ and $C_n$.
The proofs of these results are given in Section \ref{tr-a} for type $A_n$ and in Section \ref{tr-c} for type $C_n$.
In Section \ref{digraphs}, we show how the simplices of the type $A_n$ triangulation
studied in Section \ref{tr-a} can be interpreted as directed graphs.
In Section \ref{volume}, we study the volumes of $\mc P_\Phi$ and $\mc P_\Phi^+$, for $\Phi$ of  type $A_n$ and $C_n$.

\section{Preliminaries}
\label{pre}
In this section, we fix the notation and recall the known results that we most  frequently use in the paper.
\par

For $n,m\in {\mathbb Z}$, with $n\le m$, we set $[n,m]=\{n,n+1,\dots,m\}$ and,
for $n\in {\mathbb N}\setminus \{0\}$, $[n]=[1,n]$. 
For every set $I$, we denote its cardinality by $|I|$.
\par

For basic facts about root systems, Weyl groups, Lie algebras, and convex
polytopes,  we refer the reader, respectively, to \cite{Bou}, \cite{BB, Hum}, \cite{H}, and~\cite{G2}. 
For any finite irreducible  crystallographic root system, we number the simple roots according to \cite{Bou}.
For the affine root system associated to $\Phi$,  we adopt the notation and definitions of \cite[Chapter 6]{Kac}, unless otherwise specified. 
\smallskip

\subsection{Root systems.}\label{rootsystems}

Let $\Phi$ be a finite irreducible (reduced) crystallographic root system in a Euclidean space $E$ with the positive definite bilinear form $(-,-)$.
\par

For all $X\subseteq E$, we denote by $\gen\,X$  the real vector space generated by $X$. Thus $E=\gen\,\Phi$. 
\par

We fix our further notation on the root system and its Weyl group in the
following list:
\smallskip 

{\renewcommand{\arraystretch}{1.2}
\begin{longtable*}{>{$} l <{$}>{$} l <{$}}
n &  \textrm{the rank of $\Phi$}, 
\\
\Pi= \{\al_1, \ldots, \al_n\} &  \textrm{set of simple roots}, 
\\
\Pi_\ell & \textrm{the set of long simple roots}, 
\\
\breve{\omega}_1, \dots, \breve{\omega}_{n} &  \textrm{the 
fundamental coweights (the dual basis of $\Pi$)},
\\
\Phi^+  &  \textrm{the set of positive roots w.r.t. $\Pi$},
\\
\Phi(\Gamma) &  \textrm{the root subsystem generated by $\Gamma$, for all
$\Gamma\subseteq \Phi$},
\\
\Phi^+(\Gamma)  &  = \Phi(\Gamma) \cap \Phi^+, 
\\
\ov{\Phi'}  & = \gen\, \Phi' \cap \Phi, \textrm{the parabolic closure of the
subsystem $\Phi'$},\\
c_\al(\be), c_i(\be) &  \textrm{the coordinates of } \be\in \Phi \textrm{ w.r.t.
} \Pi: \be=\sum_{\al\in \Pi}c_\al(\be)\al,\\
&c_i(\be)=c_{\al_i}(\be),
\\
\supp(\be) & =\{\al\in \Pi \mid c_\al(\be) \neq 0  \},
\\
\theta  & \textrm{the highest root},
\\
m_\al, m_i &   m_\al=c_\al(\theta),\  m_i=m_{\al_i},  
\\
W   &  \textrm{the Weyl group of $\Phi$},
\\
s_{\al}   & \textrm{the reflection through the hyperplane $\al^{\perp}$},
\\
\ell  &  \textrm{the length function of $W$ w.r.t. $\{s_\al\mid\al\in \Pi\}$},
\\
w_0 & \textrm{the longest element of $W$},\\
D_r(w) & =\{\al\in \Pi \mid \ell(ws_{\al})<\ell(w)\},  \textrm{ the set of right
descents of $w$},
\\
N(w)  & = \{\be \in \Phi^+ \mid w^{-1}(\be) \in -\Phi^+\},
\\
\ov{N}(w)  & = \{\be \in \Phi^+ \mid w(\be) \in -\Phi^+\}, 
\\
W\la \Gamma\ra & \textrm{the subgroup of $W$ generated by 
$\{s_\al\mid \al\in \Gamma\}$ ($\Gamma\subseteq \Phi$)}, 
\\
\wh{\Phi}  &  \textrm{the affine root system associated with $\Phi$}, 
\\
\al_0  &  \textrm{the affine simple root of $\wh{\Phi}$},
\\ 
\wh{\Pi} & = \Pi \cup \{\al_0\}, 
\\
 \wh{\Phi}^+  &  \textrm{the set of positive roots of $\wh{\Phi}$
w.r.t.  $\wh{\Pi}$}, 
\\
\wh{W} &  \textrm{the affine Weyl group of ${\Phi}$}.
\end{longtable*}
\bigskip

We call {\it integral basis} a basis of the vector space   $\gen\,\Phi$ that also is a $\ganz$-basis of the root lattice $\sum_{i\in [n]}\ganz \al_i$. 
\par

We denote by $\leq$ the usual order of the root lattice:
$\al \leq \be$ if and only if $\be - \al$ is a nonnegative linear combination of roots in $\Phi^+$. 
\par

\subsection{The sets $N(w)$}\label{Nw}
We call a set $N$ of positive roots {\em convex} if all roots that are a positive linear combination of roots in $N$ belong to $N$.  
It is clear that, for all $w$ in the Weyl group $W$, the set
$$
N(w)=\{\al\in \Phi^+\mid w^{-1}(\al)<0\}
$$
and its complement $\Phi^+\setminus N(w)$ are both convex.
\par

A set $N$ of positive roots is called {\em closed} if all roots that are a sum of two  roots in $N$ belong to $N$. It is clear that a convex set of roots is closed.
\par

If $N$ and $\Phi^+\setminus N$ are both closed, then there exists $w\in W$ such that $N=N(w)$ (\cite{Papi}, see also \cite{Pilk}). Hence, for all $N\subseteq \Phi^+$, the following three conditions are equivalent:
\begin{enumerate}
\item $N$ and $\Phi^+\setminus N$ are closed, 
\item  there exists $w\in W$ such that $N=N(w)$, 
\item $N$ and $\Phi^+\setminus N$ are convex.
\end{enumerate}
The implication (1) $\Rightarrow$ (2) holds for finite reduced crystallographic root systems and,  with some restrictions, for affine crystallographic root systems.  
The equivalence  (2) $\Leftrightarrow$  (3) holds, indeed,  for the root  system of any finitely generated Coxeter groups, provided $N$ is finite (see \cite{Pilk}).
\par

For all $w\in W$, the convexity of $\Phi^+\setminus N(w)$ implies, in particular, that  for all $\be\in N(w)$
\begin{equation}\label{supporto}
\supp(\be)\cap N(w)\neq\emptyset.
\end{equation}

We recall the following facts, which hold for the root system of any finitely generated  Coxeter group \cite[Chapter 5]{Hum}. 

If $w= s_{\al_{i_1}} \cdots s_{\al_{i_r}}$ is a reduced expression of $w \in W$, then  
\begin{equation}\label{N}
N(w)=\{\ga_1, \ldots, \ga_r\},\quad \text{where}\quad
\ga_h= s_{\al_{i_1}} \cdots s_{\al_{i_{h-1}}}(\al_{i_h})\quad\text{for}\quad  h \in [r]. 
\end{equation}
In particular, $\ell(w)=|N(w)|$. 
\par
By definition, $\ov N(w)=N(w^{-1})$, hence $\ov N(w)=\{\be_1, \ldots, \be_r\}$,
where $\be_h= s_{\al_{i_r}} \cdots$
$s_{\al_{i_{h+1}}}(\al_{i_h})$, for $h \in [r]$. 
\par

From the definition of $N$,   we directly obtain that for  $w, x\in W$, 
$$N(wx)=\(N(w)\setminus (-wN(x)\cap \Phi^+)\)\cup (wN(x)\cap \Phi^+).$$  
This implies the following result.
\begin{lem}\label{inclusione}
For all $w, x\in W$, the following conditions are equivalent: 
\begin{enumerate}
\item $N(w)\subseteq N(wx)$; 
\item $wN(x)\subseteq \Phi^+$; 
 \item $N(wx)=N(w)\cup wN(x)$;
\item $\ell(wx)=\ell(w)+\ell(x)$.
\end{enumerate}
 Equivalently, for $w, v\in W$,  
$N(w)\subseteq N(v)$ if and only if $\ell(v)=\ell(w)+\ell(w^{-1}v)$.
\end{lem}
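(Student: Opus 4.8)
The plan is to deduce all of (1)--(4) from the set identity recorded just above the statement,
$$N(wx)=\bigl(N(w)\setminus (-wN(x)\cap \Phi^+)\bigr)\cup (wN(x)\cap \Phi^+),$$
by upgrading it to a \emph{disjoint} union and then counting. Abbreviate $A=-wN(x)\cap\Phi^+$ and $B=wN(x)\cap\Phi^+$, so the identity reads $N(wx)=(N(w)\setminus A)\cup B$. Using only the definition $N(y)=\{\beta\in\Phi^+\mid y^{-1}(\beta)\in-\Phi^+\}$ and applying $w^{-1}$ to the relevant roots, I would record three elementary facts: (i) $A\subseteq N(w)$, since $\beta\in A$ means $-\beta=w(\gamma)$ with $\gamma\in N(x)\subseteq\Phi^+$, whence $w^{-1}(\beta)=-\gamma\in-\Phi^+$; (ii) $B\cap N(w)=\emptyset$, since $\beta\in B$ gives $w^{-1}(\beta)\in N(x)\subseteq\Phi^+$; and (iii) $A\cap B=\emptyset$, since $\beta\in A\cap B$ would force $w^{-1}(\beta)$ and $w^{-1}(-\beta)=-w^{-1}(\beta)$ both to lie in $N(x)\subseteq\Phi^+$, which is impossible.

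Now I would count. By (i), $|N(w)\setminus A|=\ell(w)-|A|$ (recall $\ell(y)=|N(y)|$, see \eqref{N}); by (ii), $B$ is disjoint from $N(w)\setminus A$; by (iii), $A$ is disjoint from $N(wx)=(N(w)\setminus A)\cup B$. Hence $\ell(wx)=|N(wx)|=\ell(w)-|A|+|B|$. Moreover $wN(x)$ has $\ell(x)$ elements and decomposes into the disjoint union of its positive part $B$ and its negative part, the latter of cardinality $|A|$; so $|A|+|B|=\ell(x)$. Everything now reduces to the single condition $A=\emptyset$. Indeed $A=\emptyset\iff wN(x)\cap(-\Phi^+)=\emptyset\iff wN(x)\subseteq\Phi^+$, which is (2). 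If $A=\emptyset$ then $B=wN(x)$ and the identity becomes $N(wx)=N(w)\cup wN(x)$ with the union disjoint, giving (3); a fortiori $N(w)\subseteq N(wx)$, giving (1); and $\ell(wx)=\ell(w)+|B|=\ell(w)+\ell(x)$, giving (4). Conversely, (1) makes $A\subseteq N(w)\subseteq N(wx)$, which together with $A\cap N(wx)=\emptyset$ forces $A=\emptyset$; (3) gives $wN(x)\subseteq N(wx)\subseteq\Phi^+$, i.e. $A=\emptyset$; and (4), combined with $\ell(wx)=\ell(w)-|A|+|B|$ and $|A|+|B|=\ell(x)$, forces $|A|=0$. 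This closes the cycle (1)$\iff$(2)$\iff$(3)$\iff$(4).

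The final ``equivalently'' is just (1)$\iff$(4) after setting $v=wx$, i.e. $x=w^{-1}v$: then $N(w)\subseteq N(v)$ holds precisely when $\ell(v)=\ell(w)+\ell(w^{-1}v)$. I do not anticipate a real obstacle; the only delicate point is making sure the three disjointness statements (i)--(iii) are in place, since they are exactly what turns the given set identity into an additive identity of cardinalities, after which each of (1)--(4) is visibly a restatement of $A=\emptyset$.
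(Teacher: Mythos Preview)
Your argument is correct and follows exactly the approach the paper intends: the paper merely records the identity $N(wx)=(N(w)\setminus(-wN(x)\cap\Phi^+))\cup(wN(x)\cap\Phi^+)$ and asserts that the lemma follows, without spelling out the details. Your proof supplies precisely those details---the disjointness facts (i)--(iii) and the counting---and everything checks; in particular your observation that $A\cap N(wx)=\emptyset$ (needed for (1)$\Rightarrow A=\emptyset$) is indeed a consequence of (iii) together with $A\cap(N(w)\setminus A)=\emptyset$.
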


We also note that the right descents of $w\in W$ are the simple roots in $\ov N(w)$: 
\begin{equation}\label{discese}
D_r(w)=\ov N(w)\cap \Pi.
\end{equation}

\smallskip

\subsection{Ideals.}\label{ideals}
By \emph{the root poset of $\Phi$} (w.r.t. the basis $\Pi$) we intend  the partially ordered set whose underlying set is $\Phi^+$, with the order induced by  the root lattice.
The root poset could be equivalently defined as the transitive closure of the relation  $\al \lhd \be$ if and only if $\be - \al$ is a simple root.
The root poset hence is ranked by the height function and has the highest root $\theta$ as maximum.
\par

A {\it dual order ideal}, or {\it filter}, of $\Phi^+$ is a subset $I$ of $\Phi^+$ such that, if $\al\in I$ and $\be \geq \al$, then $\be \in I$.
\par

Let $\mf g$ be a complex simple Lie algebra, and $\mf h$ a Cartan subalgebra of $\mf g$ such that $\Phi$ is the root system of $\mf g$ with respect to $\mf h$. 
For all $\al\in \Phi$, we denote by $\mf g_\al$ the root space of $\al$. 
Moreover, we denote by $\mf b$ the Borel subalgebra corresponding to $\Phi^+$: $\mf b=\mf h + \mf n^+$, where $\mf n^+=\sum\limits_{\al\in \Phi^+}\mf g_\al=[\mf b, \mf b]$.
\par

Let $\mf i$ be an ideal of $\mf b$. Then $\mf i$ is $\mf h$-stable, hence there exists a subset $\Phi_\mf i$ of $\Phi^+$ such that 
$$
\mf i=\sum\limits_{\al\in \Phi_\mf i} \mf g_\al+ \mf  i\cap \mf  h.
$$
The condition $[\mf i, \mf b]\subseteq \mf i$ implies that $(\Phi_\mf i + \Phi^+) \cap \Phi \subseteq \Phi_\mf i$, i.e. that $\Phi_\mf i$ is a dual order ideal in the root poset. 
Conversely, if $I$ is a dual order ideal of $\Phi^+$, then 
$$
\mf i_I=\sum\limits_{\al\in I} \mf g_\al
$$ 
is an ideal of $\mf b$, included in $\mf n^+$. We call ad-nilpotent the ideals of $\mf b$ included in $\mf n^+$. 
Thus $\mf i\mapsto \Phi_\mf i$ is a bijection from the set of ad-nilpotent ideals of $\mf b$ to the set of dual order ideals of $\Phi^+$, with inverse map  $I\mapsto \mf i_I$. 
\par
 
It is easy to prove that an abelian ideal of  $\mf {b}$ is necessarily ad-nilpotent. 
The ideal $\mf i$ is abelian if and only if $\Phi_\mf i$ satisfies the abelian condition: $(\Phi_\mf i + \Phi_\mf i) \cap \Phi= \emptyset$.
\par

The dual order ideal $I$ of $\Phi^+$ is called {\it principal} if it has a minimum.
In such a case the ad-nilpotent ideal $\mf i_I$ is principal, being generated, as a $\mf b$-module, by any non-zero vector of the root space $\mf {g}_{\eta}$, where $\eta=\min I$. 
\par
 
We now briefly describe the affine root system associated to $\Phi$.
Following \cite{Kac}, we denote by $\de$ the basic imaginary root, 
and denote by $\wh \Phi$ the set of real roots of the untwisted affine Lie algebra associated with $\Phi$,
$$
\wh \Phi=\Phi+\ganz \de:=\{\al+k\de\mid \al\in
 \Phi, k\in
\ganz\}.
$$
Then 
$\gen\,\wh \Phi=\gen\,\Phi\oplus\real\de$, and $\wh \Phi$ is a crystallographic  affine root system in $\gen\,\wh \Phi$ endowed with the bilinear form obtained by extending the scalar product of $\gen\,\Phi$ to a positive semidefinite form with kernel $\real \de$.
If we take $\al_0=-\theta+\de$, then $\wh \Pi:=\{\al_0\}\cup \Pi$
is a root basis for $\wh \Phi$. The set of positive roots of $\wh
\Phi$  with respect to 
$\wh \Pi$ is 
\begin{equation}
\label{Phihat+}
\wh \Phi^+:=\Phi^+\cup ( \Phi+\ganz^+\de),
\end{equation}
where
$\ganz^+$ is the set of positive integers.
The affine Weyl group $\wh W$ associated to $W$ is the Weyl group of $\wh \Phi$. We extend the notation $N(w)$ to all $w\in \wh W$, so 
$N(w)=\{\al\in \wh \Phi^+\mid  w^{-1}(\al)\in -\wh \Phi^+\}$. 
\par 

The analogues of most results of Subsection \ref{Nw} hold in the affine case, too, possibly with slight restrictions. In particular, Formulas \eqref{supporto} and \eqref{N}, and Lemma \ref{inclusione} hold without changes. The equivalence of conditions (1), (2), and (3) extends to the affine case, except for $\Phi$ of type $A_1$. Thus, except for $\Phi$ of type $A_1$,  for all finite $N\subset \wh\Phi^+$, the following three conditions are equivalent:
\begin{enumerate}
\item $N$ and $\wh\Phi^+\setminus N$ are closed, 
\item there exists $w\in \wh W$ such that $N=N(w)$, 
\item $N$ and $\wh \Phi^+\setminus N$ are convex.
\end{enumerate}  
When  $\Phi$ is of type $A_1$, all subsets of  $\widehat \Phi^+$ are closed,  since the sum of two roots in $\widehat \Phi^+$ is never a root. Thus, in this case, the implications  (1) $\Rightarrow$ (2) and (1) $\Rightarrow$ (3) fail.
\par
We  recall  Peterson's bijection,  described in \cite{K}, between the abelian ideals and a special subset of the affine Weyl group. 
Let $\mf i$ be an abelian ideal of $\mf b$, and
$-\Phi_{\mf i}+\de:=\{-\al+\de\mid \al\in \Phi_{\mf i}\}.$
Then, there exists a (unique) element
$w_{\mf i}\in \wh W$ such that $N(w_{\mf i})=-\Phi_{\mf i}+\de$. 
Moreover, the map $\mf i\mapsto w_{\mf i}$ is a bijection from the
set of the abelian ideals of $\mf b$ to the set of all elements $w$ in 
$\wh W$ such that $N(w) \subseteq -\Phi^++\de$.
\par 

In order to simplify the notation, we identify the ad-nilpotent ideals with their set of roots: henceforward, we shall view such ideals as subsets of $\Phi^+$. 
\par 

We denote by $\mc I_\ab$ the set of abelian ideals in $\Phi^+$ and by 
$\wh W_\ab$ the corresponding set of elements in $\wh W$.
\par

For all $w\in \wh W_\ab$, $w^{-1}(-\theta + 2\de)$ is a long positive root and belongs to $\Phi$. 
For all long roots $\al\in \Phi^+$, let 
\begin{eqnarray}
\mc I_{\ab}(\al)=\{\mf i\in \mc I_\ab\mid w_{\mf i}^{-1}(-\theta + 2\de)= \al\}.
\end{eqnarray}

In the following statement we collect some results about the poset of the abelian ideals proved by Panyushev.
 
\begin{thm}\label{panyushev} (Panyushev \cite{Pany}).
\begin{enumerate}
 \item For any long root $\alpha$, $\mc I_{\ab}(\al)$ has a minimum and
a maximum. 
\item
The  maximal abelian ideals are exactly the maximal elements of the $\mc I_{\ab}(\al)$ with $\al\in \Pi_\ell$.  In particular, the maximal abelian ideals  are in bijection with the long simple roots. 
\item
For any pair of distinct  $\alpha, \alpha'$ in $\Pi_\ell$, any ideal in $\mc I_\ab(\al)$ is incomparable with any ideal in $\mc I_\ab(\al')$.
\end{enumerate}
\end{thm}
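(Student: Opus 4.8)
The plan is to deduce all three statements from Peterson's bijection together with the basic dictionary between abelian ideals and the elements $w_{\mf i}\in \wh W_\ab$, using that $N(w_{\mf i})=-\Phi_{\mf i}+\de$ and that inclusion of ideals corresponds, via Lemma \ref{inclusione}, to the weak order on the $w_{\mf i}$. The starting observation is that for $w\in \wh W_\ab$ one has $w^{-1}(-\theta+2\de)\in\Phi^+$ and this root is long; this defines the map $\mf i\mapsto\al=\al(\mf i)$ partitioning $\mc I_\ab$ into the blocks $\mc I_\ab(\al)$, $\al$ a long positive root. The key algebraic fact to establish first is that $\mf i\subseteq\mf j$ (equivalently $N(w_{\mf i})\subseteq N(w_{\mf j})$) forces $\al(\mf i)$ and $\al(\mf j)$ to be comparable in the root poset, with $\al(\mf j)\le\al(\mf i)$: indeed $-\theta+\de=\al_0\in\wh\Pi$, so $-\theta+2\de=s_{\al_0}(\theta+\de)$ is a low affine root, and one tracks how $w^{-1}$ moves it as $w$ increases in weak order; concretely, if $\ell(w_{\mf j})=\ell(w_{\mf i})+\ell(w_{\mf i}^{-1}w_{\mf j})$, then $w_{\mf j}^{-1}(-\theta+2\de)=(w_{\mf i}^{-1}w_{\mf j})^{-1}\big(w_{\mf i}^{-1}(-\theta+2\de)\big)$, and one shows the extra reflections can only decrease the root in the root-poset order. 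I expect this monotonicity to be the main obstacle, and the cleanest route is to characterize $\al(\mf i)$ intrinsically: $\al(\mf i)$ is the unique root such that $\al(\mf i)$ is $\le$-maximal among roots $\be$ with $-\be+2\de\notin N(w_{\mf i})$ while $-\be'+2\de\notin N(w_{\mf i})$ for… — more simply, one uses the known fact (Kostant/Peterson theory, \cite{K}, \cite{Pany}) that $\min\mc I_\ab(\al)$ is the principal ideal generated by a specific root and $\max\mc I_\ab(\al)=I(\al)$, so the blocks are exactly the fibers of a poset map, and comparability across blocks is controlled by comparability of the $\al$'s.

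Granting the dictionary, I would argue \textbf{(1)} as follows. Fix a long root $\al$ and consider the set $S_\al=\{w_{\mf i}\mid \mf i\in\mc I_\ab(\al)\}\subseteq\wh W_\ab$. One shows $S_\al$ is an interval for the weak (left) order on $\wh W$: it has a least element, namely the $w$ with $N(w)$ equal to the smallest admissible set (the $w$ attached to the principal ideal $M_{\min}$, using that any $\mf i\in\mc I_\ab(\al)$ contains this minimal ideal — this is essentially Panyushev's description of $\min\mc I_\ab(\al)$), and a greatest element attached to $I(\al)$. The existence of the maximum $I(\al)$ is the substantive point: among all $\mf i$ with $w_{\mf i}^{-1}(-\theta+2\de)=\al$, the sets $N(w_{\mf i})=-\Phi_{\mf i}+\de$ are all contained in $-\Phi^++\de$ and are closed-with-closed-complement; one checks their union (equivalently $\bigcup\Phi_{\mf i}$) is again a dual order ideal satisfying the abelian condition and again has the same associated $\al$, so by Peterson's bijection it is an element of $\mc I_\ab(\al)$, necessarily its maximum. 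For the minimum one dually intersects, or invokes that a nonempty fiber of a map from a poset with minimum need not have a minimum in general, so here one really uses Panyushev's explicit minimum.

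For \textbf{(2)}: a maximal abelian ideal $\mf m$ lies in some block $\mc I_\ab(\al)$ and, being maximal in $\mc I_\ab$, is a fortiori maximal in its block, hence equals $I(\al)$ by (1). Conversely $I(\al)$ need only be shown maximal in $\mc I_\ab$ when $\al\in\Pi_\ell$: if $I(\al)\subsetneq\mf j$ for some abelian $\mf j$, then $\al(\mf j)\le\al$ by the monotonicity established above, but $\al$ simple forces $\al(\mf j)=\al$, contradicting maximality of $I(\al)$ within $\mc I_\ab(\al)$; and one checks $I(\al)$ for $\al\in\Pi_\ell$ is genuinely not contained in any $I(\al')$ with $\al'\in\Pi_\ell$, $\al'\ne\al$, which follows from (3). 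That the assignment $\al\mapsto I(\al)$ is injective on $\Pi_\ell$ is immediate since $I(\al)\in\mc I_\ab(\al)$ and the blocks are disjoint. Finally \textbf{(3)}: if $\mf i\in\mc I_\ab(\al)$ and $\mf i'\in\mc I_\ab(\al')$ were comparable, say $\mf i\subseteq\mf i'$, the monotonicity gives $\al'\le\al$; since $\al,\al'\in\Pi_\ell$ are distinct simple roots they are incomparable, a contradiction, and symmetrically for $\mf i'\subseteq\mf i$. Thus the whole theorem reduces to (i) the standard abelian-ideal/affine-Weyl-group dictionary recalled in the excerpt, (ii) the monotonicity of $\mf i\mapsto\al(\mf i)$ with respect to inclusion, and (iii) stability of each block under unions — with (ii) being the crux.
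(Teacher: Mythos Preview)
The paper does not give its own proof of this theorem: it is quoted as a result of Panyushev, with citation to \cite{Pany}, and is used as a black box thereafter (the only related original content in the paper is Proposition~\ref{Ialfa}, which characterises $\bigcup_{\al\in\Pi_\ell}\mc I_\ab(\al)$ but does not by itself yield any of (1)--(3)). So there is nothing to compare your argument against; what remains is to assess whether your sketch stands on its own.

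It does not, and you identify the weak points yourself. Your entire architecture rests on (ii), the monotonicity $\mf i\subseteq\mf j\Rightarrow \al(\mf j)\le\al(\mf i)$, but you never prove it: the line ``one shows the extra reflections can only decrease the root in the root-poset order'' is an assertion, not an argument, and the alternative route you float (``more simply, one uses the known fact \dots\ that $\min\mc I_\ab(\al)$ is the principal ideal \dots\ and $\max\mc I_\ab(\al)=I(\al)$'') is circular, since those are exactly the facts you are trying to establish. Without (ii), parts (2) and (3) collapse.

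There is a second genuine gap in your argument for (1): you claim that the union of all $\Phi_{\mf i}$ over $\mf i\in\mc I_\ab(\al)$ is again an abelian ideal. The dual-order-ideal property is stable under unions, but the abelian condition is not: if $\be\in\mf i$ and $\ga\in\mf j$ with $\mf i,\mf j\in\mc I_\ab(\al)$ incomparable, nothing you have said rules out $\be+\ga\in\Phi$. (After the fact one sees all such ideals sit inside $I(\al)$, which is abelian, so the union is harmless; but that presupposes the existence of the maximum.) You also concede that for the minimum ``one really uses Panyushev's explicit minimum'', which is again deferring to the source. In short, the proposal is an outline of what one would \emph{like} to be true rather than a proof; the substantive work --- the monotonicity of $\al(\cdot)$ and the closure of each fibre under union --- is precisely what Panyushev's paper supplies and what is missing here.
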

\par

The following characterization of $\mc I_\ab(\al)$ will be needed in the next sections. 

\begin{pro}\label{Ialfa}
Let $\mf i\in \mc I_\ab$. The following
are equivalent:
\begin{enumerate}
\item
\label{uno.}
there exists $\al\in \Pi_\ell$ such that  $\mf i\in \mc I_\ab(\al)$,
\item
\label{due.}
for all $\be,\ga \in \Phi^+$ such that $\be+\ga=\theta$, exactly one of $\be$ and $\ga$ belongs to $\mf i$.
\end{enumerate}
\end{pro}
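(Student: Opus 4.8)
The plan is to reduce both conditions to a single assertion about the element $w_{\mf i}\in\wh W$, via Peterson's bijection and the decomposition $\wh W=W\ltimes Q^\vee$. Two preliminary observations already align the two conditions. First, if $\be,\ga\in\Phi^+$ and $\be+\ga=\theta$, then $\be+\ga$ is a root, so the abelian condition forbids $\be,\ga\in\mf i$ simultaneously; hence in \eqref{due.} ``exactly one'' may be replaced by ``at least one''. Second, $\al:=w_{\mf i}^{-1}(-\theta+2\de)$ is a long positive root and $\mf i\in\mc I_\ab(\al)$ for this (unique) $\al$; since the sets $\mc I_\ab(\be)$ are pairwise disjoint, \eqref{uno.} is equivalent to ``$\al$ is simple'', i.e.\ ``$\al$ is not a sum of two positive roots''. (We may assume $\mf i\neq\un0$: the zero ideal satisfies neither condition, the second because $\theta$ is not simple unless $\Phi$ has type $A_1$, which one treats directly.)

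Next I would rewrite the membership relation ``$\be\in\mf i$'' in affine terms. Write $w:=w_{\mf i}=t_\lambda v$ with $\lambda\in Q^\vee$, $v\in W$, and use $w^{-1}(\rho+k\de)=v^{-1}(\rho)+(k+\langle\rho,\lambda\rangle)\de$ for $\rho\in\Phi$. From $N(w)=-\mf i+\de\subseteq-\Phi^++\de$ one reads off the standard facts: $\lambda$ is dominant; $\langle\theta,\lambda\rangle=2$ (this is precisely what forces $w^{-1}(-\theta+2\de)\in\Phi$), hence $\al=-v^{-1}(\theta)$ and $\theta\in N(v)$; and, crucially, $N(v)\subseteq\{\rho\in\Phi^+\mid\langle\rho,\lambda\rangle\ge1\}$ (otherwise a level-$0$ root would enter $N(w)$). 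Now fix $\be\in\Phi^+$ with $\theta-\be\in\Phi^+$, i.e.\ $\be$ occurs in a decomposition of $\theta$; then $\langle\be,\lambda\rangle$ and $\langle\theta-\be,\lambda\rangle$ are nonnegative integers summing to $2$. If $\{\langle\be,\lambda\rangle,\langle\theta-\be,\lambda\rangle\}=\{0,2\}$, computing $w^{-1}(-\be+\de)$ shows the member with $\langle\cdot,\lambda\rangle=2$ lies in $\mf i$ and the other does not, so such a pair never violates \eqref{due.}; if $\langle\be,\lambda\rangle=\langle\theta-\be,\lambda\rangle=1$, the same computation gives $\be\in\mf i\iff v^{-1}(\be)\in\Phi^+\iff\be\notin N(v)$.

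The core of the proof is then the equivalence
\[
\al\text{ is a sum of two positive roots}\iff\text{some decomposition }\theta=\be+\ga\ (\be,\ga\in\Phi^+)\text{ has }\be,\ga\notin\mf i .
\]
For $\Rightarrow$: if $\al=\mu+\nu$ with $\mu,\nu\in\Phi^+$, then $v(\mu)+v(\nu)=v(\al)=-\theta$; as $-\theta$ is the lowest root, neither $v(\mu)$ nor $v(\nu)$ can be positive (else $\theta+v(\mu)$ would be a root above $\theta$), so $\be:=-v(\mu)$ and $\ga:=-v(\nu)$ lie in $\Phi^+$, sum to $\theta$, and satisfy $v^{-1}(\be)=-\mu<0$, $v^{-1}(\ga)=-\nu<0$, i.e.\ $\be,\ga\in N(v)$. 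By the inclusion for $N(v)$ the pair is necessarily of the ``$(1,1)$'' kind, whence $\be,\ga\notin\mf i$. For $\Leftarrow$: a decomposition $\theta=\be+\ga$ with $\be,\ga\notin\mf i$ cannot be of the ``$\{0,2\}$'' kind, hence is of the ``$(1,1)$'' kind, so $\be,\ga\in N(v)$; then $v^{-1}(\be)+v^{-1}(\ga)=v^{-1}(\theta)=-\al$ exhibits $\al=(-v^{-1}\be)+(-v^{-1}\ga)$ as a sum of two positive roots. Combining this with the two reductions of the first paragraph yields \eqref{uno.}$\iff\neg(\al$ is a sum of two positive roots$)\iff$\eqref{due.}.

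The step I expect to be the real obstacle is isolating exactly which structural input about $w_{\mf i}$ is needed — above all the inclusion $N(v)\subseteq\{\rho\in\Phi^+\mid\langle\rho,\lambda\rangle\ge1\}$, since this is what guarantees that a decomposition of $\al$ yields a decomposition of $\theta$ of the ``$(1,1)$'' kind, and it is only for those that membership in $\mf i$ is detected by $N(v)$. The rest is bookkeeping with the formula for $w^{-1}$ and the extremality of $\theta$. Depending on how much of the $\wh W=W\ltimes Q^\vee$ formalism one wishes to develop in place, these properties of $w_{\mf i}$ can alternatively be quoted from the known description of Peterson's elements in \cite{Pany} and \cite{K}.
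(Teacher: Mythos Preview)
Your argument is correct and takes a genuinely different route from the paper's. The paper never unpacks $w_{\mf i}$ via the semidirect product $\wh W = W \ltimes Q^\vee$; instead it stays entirely inside the affine root combinatorics. It first rewrites condition~\eqref{due.} as the statement that for every decomposition $-\theta+2\de=\be'+\ga'$ in $\wh\Phi^+$, exactly one of $\be',\ga'$ lies in $N(w_{\mf i})$. Then \eqref{uno.}$\Rightarrow$\eqref{due.} follows because $N(w_{\mf i}s_\al)=N(w_{\mf i})\cup\{-\theta+2\de\}$, and closure of this set together with closure of its complement forces the ``exactly one'' condition; conversely, \eqref{due.} together with $N(w_{\mf i})\subseteq -\Phi^++\de$ makes $N(w_{\mf i})\cup\{-\theta+2\de\}$ closed and coclosed, hence equal to some $N(w)$, which by Lemma~\ref{inclusione} must be $N(w_{\mf i}s_\al)$ for a simple $\al$. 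Your approach, by contrast, localizes everything to the finite part $v$ and the coweight $\lambda$: it trades the closed/coclosed machinery for an explicit level-by-level analysis of decompositions of $\theta$, and the decisive input becomes the inclusion $N(v)\subseteq\{\rho:\langle\rho,\lambda\rangle\ge 1\}$ rather than the characterization of $N$-sets. The paper's argument is shorter and entirely internal to the setup of Subsection~\ref{Nw}; yours is more constructive, makes the role of $v$ and $\lambda$ visible (linking the proposition directly to Panyushev's description of the minuscule elements), and recovers $\al=-v^{-1}(\theta)$ explicitly.
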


\begin{proof}
If $\Phi=A_1$, the two assertions are trivially equivalent, so we may assume  $\Phi\neq A_1$.
We first observe that (\ref{due.}) is equivalent to the following statement: 
\begin{itemize}
\item[(3)]
for all  $\be',\ga' \in \wh\Phi^+$ such that $\be'+\ga' =-\theta+2\de$,
exactly one of $\be'$, $\ga'$ belongs to $N(w_\mf i)$.
\end{itemize}
Indeed, by Formula \eqref{Phihat+}, $\be', \ga' \in \wh\Phi^+$ are such that $\be'+\ga' =-\theta+2\de$,  if and only if $\be',\ga'\in -\Phi+\de$, thus $\be:=\de-\be'  \in \Phi^+$,
$\ga:=\de-\ga' \in \Phi^+$, and $\be+\ga=\theta$. Since, for all $\eta'\in \wh\Phi^+$, we have that $\eta'\in N(w_\mf i)$ if and only if $\de-\eta' \in \mf i$, assertion 
(3) is equivalent to (2).
\par

Now, we assume that $\mf i\in \mc I_\ab(\al)$ for some $\al\in \Pi_\ell$. 
Then, since $-\theta+2\de$ is a positive root, by Lemma \ref{inclusione}, $N(w_{\mf i}s_\al)=N(w_{\mf i})\cup\{-\theta+2\de\}$. 
This implies that  both $N(w_\mf i)\cup \{-\theta+2\de\}$ and its complement are closed.  
The closure of $\wh \Phi^+\setminus\big (N(w_\mf i)\cup \{-\theta+2\de\}\big)$, together with the the closure of  $N(w_\mf i)$,  implies (3). 
\par
It remains to prove that (3) implies that   $\mf i\in \mc I_\ab (\al)$ for some $\al\in \Pi_\ell$. 
Assume that (3) holds.  Since  $N(w_\mf i)\subseteq -\Phi^++\de$, we have that,  for all $\gamma\in N(w_\mf i)$, $-\theta + 2 \delta+\gamma$ is not a root: hence $N(w_\mf i)\cup \{-\theta+2\de\}$ is closed since
$N(w_\mf i)$ is.
Moreover, $\wh \Phi^+\setminus\big (N(w_\mf i)\cup \{-\theta+2\de\}\big)$ is closed by (3) and the fact that  $\wh \Phi^+\setminus N(w_\mf i)$ is closed.
Therefore, there exists $w\in \wh W$
such that $N(w_\mf i)\cup \{-\theta+2\de\}=N(w)$. By Lemma \ref{inclusione}, it follows that there exists $\al \in \widehat \Pi$ satisfying $w=w_\mf i s_\al$ and 
$ w_\mf i(\al)=-\theta+2\de$ . Indeed, $\alpha
\in \Pi_\ell$, since  it is a general fact that, for all $w\in \wh W_\ab$,
$w^{-1}(-\theta + 2\de)$ is a (long positive) root in $\Phi$.
\end{proof}

\smallskip

\subsection{Root polytopes.}\label{rootpolytopes}
For any subset $S$ of $\gen \,(\Phi)$, we denote by $\conv(S)$ the convex hull
of $S$ and by  $\conv_0(S)$ the convex hull of $S\cup\{\un 0\}$.
\par

We denote by $\mc P_{\Phi}$ the root polytope of $\Phi$, i.e. the convex hull of all roots in $\Phi$:
$$
\mc P_\Phi:=\conv(\Phi).
$$
We recall the results in \cite{C-M} that will be needed in the sequel.
Recall that $\theta=\sum_{i\in[n] }m_i \al_i$ is the highest root of $\Phi$, with respect to $\Pi$, and $\{\breve\omega_i\mid i\in [n]\}$ is the set of the fundamental co-weights of $\Phi$. 
We will think of $\breve\omega_i$ both as a functional and as a vector (the vector of $\gen\,\Phi$ defined by the condition  $(\al_j,\breve\omega_i)=\de_{ji}$, for all $j\in [n]$). 
\par

For every $I \subseteq [n]$, we let 
$$
V_I:=\{\al\in \Phi^+\mid (\al,\breve{\omega}_i)= m_i, \; \forall i \in I\}, \qquad F_I:=\conv(V_I)
$$ 
and we call $F_I$  the \emph{standard parabolic face associated with $I$}. For simplicity, we let $F_i=F_{\{i\}}$.
The standard parabolic faces $F_I$ are actually faces of the root polytope
$\mc P_{\Phi}$, since $\mc P_{\Phi}$ is included in the half-space $(-,\breve{\omega}_i)\leq m_i$. Moreover, the maximal root $\theta$ belongs to all standard parabolic faces.
\par

We denote by $\wh \Phi$ the affine root system associated with $\Phi$.
Let $\wh \Pi$ be an extension of $\Pi$ to a simple system of $\wh
\Phi$ and 
$\al_0\in \wh \Phi$ be such that $\wh \Pi=\Pi\cup
\{\al_0\}$.
Given $I \subseteq [n]$, we let $\Pi_I := \{\al_i \mid i \in I\}$,
$\Gamma_0(I)$ be the set of roots lying in the connected component of $\al_0$ in the Dynkin graph of $\wh \Pi\setminus \Pi_I$, and
\begin{itemize}
 \item[{}] $\ov I:=\{k\mid \al_k \not\in \Gamma_0(I)\}$,
\item[{}] $\partial I:=\{ j\mid \al_j\in \Pi_I, \text{ and }\exists\, \be \in 
\Gamma_0(I) \text{ such that } \be\not\perp\al_j\}.$
\end{itemize}
 
We recall that, for any subset $\Gamma$ of $\Phi$, we denote by $W\la\Gamma\ra$
the subgroup of $W$
generated by the reflections with respect to the roots in $\Gamma$, 
and  by $\Phi(\Gamma)$ the root subsystem generated by $\Gamma$, i.e.
$\Phi(\Gamma):=\{w(\ga)\mid w\in W\la\Gamma\ra,\ \ga\in \Gamma\}$.  
For each $\Gamma \subseteq \Pi$, 
let $\wh \Gamma=\Gamma \cup \{\al_0\}$ and $\wh{\Phi}(\wh
\Gamma)$ be the standard parabolic subsystem of $\wh \Phi$ generated by
$\wh \Gamma$.  
\par

In the following result we collect several properties of the standard
parabolic faces (see \cite{C-M} for proofs).
\begin{thm}
 \label{CM} 
Let $I \subset [n]$. Then: 
\begin{enumerate}
\item $V_I$ is a principal abelian ideal in $\Phi^+$,
\item $\{J \subset [n] \mid F_J=F_I\} = \{J \subset [n] \mid \partial I \leq
J \leq \ov{I} \}$, 
\item the dimension of $F_I$ is $n - |\ov I|$,
\item the stabilizer of $F_I$ in $W$ is $W\la \Pi \setminus \Pi_{\partial I}
\ra$,
\item $\{\ov I \mid I \subseteq [n] \} = \{ I \subseteq [n] \mid
\wh{\Phi}(\wh \Pi \setminus \Pi_I) \text{
is irreducible} \}$,
\item
the set $\{ \Pi\setminus \Pi_{\ov I} \} \cup \{ - \theta\}$ is a basis of the root subsystem $\Phi(V_I)$ generated by the roots in $F_I$.
\end{enumerate}
Moreover, the faces $F_{I}$, for $ I \in \mc F = 
\{ I \subseteq [n] \mid \wh{\Phi}(\wh \Pi \setminus \Pi_I) \text{
is irreducible} \}$,
form a complete set of representatives of the $W$-orbits.
In particular, the $f$-polynomial of $\mc P_\Phi$ is
$$
\sum\limits_{I \in \mc F} [W:W\la \Pi \setminus \Pi_{\partial I} \ra]
t^{n - | I|}.
$$ 
\end{thm}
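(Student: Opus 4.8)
The plan is to transport everything to the affine root system through the map $\be\mapsto\de-\be$ already used in the proof of Proposition~\ref{Ialfa}: it turns the conditions defining $V_I$ into parabolic conditions in $\wh\Phi$, and this is exactly what makes the combinatorics of $\Gamma_0(I)$, $\ov I$ and $\partial I$ surface. First, each $F_I$ is a face because $\mc P_\Phi\subseteq\{x\mid(x,\breve\omega_i)\le m_i\}$. Conversely, a proper face is exposed by some $\phi\neq\un 0$; a dominant $W$-conjugate $w\phi=\sum_j c_j\breve\omega_j$ (with $c_j\ge 0$) exposes $wF$, and since every positive root is $\le\theta$ and every $m_j\ge 1$, $\max_{\Phi}(w\phi,\cdot)=(w\phi,\theta)>0$ is attained exactly on $\{\be\in\Phi^+\mid c_j(\be)=m_j\text{ whenever }c_j>0\}=V_I$ for $I=\{j\mid c_j>0\}$; hence every $W$-orbit of faces meets $\{F_I\}_{I\subseteq[n]}$. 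For (1): $V_I$ is a filter because $c_i(\be)\le c_i(\be')\le m_i$ whenever $\be\le\be'$, and it is abelian because for $i\in I\neq\emptyset$ the value $c_i(\be+\be')=2m_i>m_i$ cannot occur for $\be+\be'\in\Phi^+$. For principality, $\be\mapsto\de-\be$ identifies $V_I$ with the set of positive roots of $\wh\Phi(\wh\Pi\setminus\Pi_I)$ that have $\al_0$ in their support, i.e.\ with the positive roots of the irreducible component $\wh\Phi(\wh\Gamma_0(I))$ of $\al_0$ having $\al_0$-coefficient $1$ (membership in $-\Phi^++\de$ is automatic, since such a root has support in a proper subdiagram, hence not full support); since $I\neq\emptyset$, $\wh\Pi\setminus\Pi_I$ is a proper subdiagram of the connected affine diagram, so $\wh\Gamma_0(I)$ is of finite type, and this set of roots is the set of weights of the degree-one piece of the $\ganz$-grading of the simple Lie algebra of type $\wh\Gamma_0(I)$ by the $\al_0$-coordinate, which is an irreducible module over the degree-zero Levi and thus has a unique maximal weight; its image under $\be\mapsto\de-\be$ is the minimum of $V_I$.

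For (3), (5), (6): if $\be\in V_I$ then $\de-\be$ is supported in $\wh\Gamma_0(I)$, so $c_k(\be)=m_k$ for every $k\in\ov I$; hence $V_I=V_{\ov I}$, and $\gen(V_I-\theta)=\gen(\{(\de-\be)-\al_0\mid\be\in V_I\})$, which, using that $\wh\Gamma_0(I)$ is connected, equals $\gen(\wh\Gamma_0(I)\setminus\{\al_0\})=\gen(\Pi\setminus\Pi_{\ov I})$, so $\dim F_I=n-|\ov I|$, which is (3). Since $\al_0=-\theta+\de\equiv-\theta\pmod{\real\de}$ and, for $I\neq\emptyset$, $\theta\notin\gen(\Pi\setminus\Pi_{\ov I})$, the quotient by the radical $\real\de$ is an isometry on $\gen(\wh\Gamma_0(I))$ carrying $\wh\Phi(\wh\Gamma_0(I))$ onto the subsystem $\Phi(V_I)$ generated by $V_I$ and its simple system $\{\al_0\}\cup(\wh\Gamma_0(I)\cap\Pi)$ onto $\{-\theta\}\cup(\Pi\setminus\Pi_{\ov I})$; this is (6). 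Finally, $\ov I$ is by definition the set of nodes outside the $\al_0$-component of $\wh\Pi\setminus\Pi_I$, so $\wh\Phi(\wh\Pi\setminus\Pi_{\ov I})$ is irreducible; conversely, if $\wh\Phi(\wh\Pi\setminus\Pi_J)$ is irreducible then $\wh\Pi\setminus\Pi_J$ is its own $\al_0$-component and $\ov J=J$; this, together with $\ov{\ov I}=\ov I$, gives (5).

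For (2), (4), the $f$-polynomial and the ``Moreover'': since $\conv(V_J)\cap\Phi=V_J$ always, $F_J=F_I$ is equivalent to $V_J=V_I$, hence (as $\ov J$ is recoverable from $V_J$ by the previous paragraph) to $\ov J=\ov I$; and since the nodes of $\Pi_{I\setminus\partial I}$ are not adjacent to the $\al_0$-component, deleting them does not change that component, so $\Gamma_0(\partial I)=\Gamma_0(I)$ and $V_{\partial I}=V_{\ov{\partial I}}=V_{\ov I}=V_I$. Thus for $\partial I\subseteq J\subseteq\ov I$ one gets $V_I=V_{\ov I}\subseteq V_J\subseteq V_{\partial I}=V_I$, while $\ov J=\ov I$ forces $\Gamma_0(J)=\Gamma_0(I)$ and hence $\partial I\subseteq J\subseteq\ov I$; this is (2). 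For (4): each $s_{\al_k}$ with $k\notin\partial I$ changes only the $\al_k$-coordinate and fixes $\Phi^+\setminus\{\al_k\}\supseteq V_I$, so it preserves the conditions $c_j=m_j$ ($j\in\partial I$) defining $V_{\partial I}=V_I$; hence $W\la\Pi\setminus\Pi_{\partial I}\ra\subseteq\stab_W F_I$, while the reverse inclusion follows from $\stab_W F_I=\stab_W\mc N(F_I)$ and the structure of the normal cone $\mc N(F_I)$, which is a union of cones of the Coxeter fan (the normal fan of $\mc P_\Phi$ refines to the Coxeter fan, each chamber collapsing to the single vertex $w\theta$) meeting the closed dominant chamber in the single face $\{\phi\text{ dominant}\mid(\phi,\al_k)=0\ \forall k\notin\ov I\}$. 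Since distinct faces of the dominant chamber lie in distinct $W$-orbits (the chamber is a strict fundamental domain), the orbit of $F_I$ determines that face, hence $\ov I$, hence $I$ when $I\in\mc F$; combined with ``every orbit meets $\{F_I\}$'' this gives the ``Moreover'', and the $f$-polynomial is then $\sum_{I\in\mc F}[W:\stab_W F_I]\,t^{\dim F_I}=\sum_{I\in\mc F}[W:W\la\Pi\setminus\Pi_{\partial I}\ra]\,t^{n-|I|}$, using $\dim F_I=n-|\ov I|=n-|I|$ for $I\in\mc F$.

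The step I expect to be the main obstacle is principality in (1) — proving that the positive roots of $\wh\Phi(\wh\Gamma_0(I))$ with $\al_0$-coefficient $1$ have a maximum, equivalently the irreducibility of the degree-one piece over the Levi — together with the precise Dynkin-diagram bookkeeping around $\partial I$ and $\ov I$: that deleting $\Pi_{I\setminus\partial I}$ leaves the $\al_0$-component undisturbed, and that $\stab_W F_I$ is no larger than $W\la\Pi\setminus\Pi_{\partial I}\ra$. One must also check carefully that the radical quotient is injective on $\gen(\wh\Gamma_0(I))$, so that the isometric identification $\wh\Phi(\wh\Gamma_0(I))\cong\Phi(V_I)$ used for (6) is legitimate.
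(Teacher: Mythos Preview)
The paper does not prove Theorem~\ref{CM}: it is stated in the Preliminaries as a summary of results from the authors' earlier paper \cite{C-M}, with the explicit remark ``see \cite{C-M} for proofs''. So there is no proof in this paper to compare your proposal against.

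That said, your outline is in the right spirit and is very close to how \cite{C-M} proceeds: the map $\be\mapsto\de-\be$ identifying $V_I$ with the $\al_0$-supported positive roots of $\wh\Phi(\wh\Gamma_0(I))$ is exactly the device used there, and the derivation of (3), (5), (6) from this identification is essentially the same. Your treatment of (1) (principality via irreducibility of the degree-$1$ piece of the $\ganz$-grading) and of the ``Moreover'' (every orbit meets $\{F_I\}$ via a dominant supporting functional, and distinct $I\in\mc F$ give distinct orbits via the Coxeter fan) are also the arguments of \cite{C-M}, modulo presentation. The points you flag as delicate --- principality, the bookkeeping showing $\Gamma_0(\partial I)=\Gamma_0(I)$, and the upper bound on $\stab_W F_I$ --- are indeed where \cite{C-M} spends its effort; your sketches of these steps are terse but not wrong.
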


The explicit description of the facets yields a description of $\mc P_\Phi$ as an intersection of a minimal set of half-spaces.

\begin{cor} 
Let 
$\Pi_\mc F=\left\{\al\in \Pi\mid \wh\Phi\left( \wh
\Pi\setminus
\{\al\} \right) 
\text{ is irreducible}\right\}$ and let $\mc L(W^\al)$ be a
set of representatives of the left cosets of $W$ modulo the subgroup  $W\la
\Pi\setminus\{\al\}\ra$.   Then 
$$
\mc P_\Phi=\big\{ x\mid (x, w\breve\omega_\al)\leq m_\al, \ 
\text{for all }\al\in \Pi_\mc F \text{ and } w\in \mc
L(W^\al )\big\}.
$$
Moreover, the above set is the minimal set of linear inequalities that 
defines $\mc P_\Phi$ as an intersection of half-spaces. 
\end{cor}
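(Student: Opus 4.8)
The plan is to read the facets of $\mc P_\Phi$ off Theorem \ref{CM} and then invoke the standard correspondence between the facets of a full-dimensional polytope and its unique irredundant description by closed half-spaces (see \cite{G2}). First one notes that $\mc P_\Phi$ is full-dimensional in $E=\gen\,\Phi$: it contains $\pm\al_1,\dots,\pm\al_n$, hence the cross-polytope $\conv\{\pm\al_i\mid i\in[n]\}$, and thus a neighbourhood of $\un 0$. By Theorem \ref{CM}, every face of $\mc P_\Phi$ is $W$-equivalent to exactly one $F_I$ with $I\in\mc F$, and for such $I$ one has $\ov I=I$, so $\dim F_I=n-|\ov I|=n-|I|$. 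Therefore the facets are precisely the $W$-translates of the faces $F_{\{i\}}$ with $\al_i\in\Pi_\mc F$, these being the one-element members of $\mc F$. For such an index $i$, part (4) of Theorem \ref{CM} gives $\stab F_{\{i\}}=W\la\Pi\setminus\Pi_{\partial\{i\}}\ra$, and one checks that $\partial\{i\}=\{i\}$: since $\al_i\in\Pi_\mc F$ the diagram $\wh\Pi\setminus\{\al_i\}$ is connected, so $\Gamma_0(\{i\})$ contains every node of $\wh\Pi\setminus\{\al_i\}$, and since the affine diagram $\wh\Pi$ is connected with at least two nodes, $\al_i$ is non-orthogonal to one of them; hence $i\in\partial\{i\}$ and $\stab F_{\{i\}}=W\la\Pi\setminus\{\al_i\}\ra$. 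Consequently the distinct facets lying in the $W$-orbit of $F_{\{i\}}$ are exactly the $wF_{\{i\}}$, $w\in\mc L(W^{\al_i})$.

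Let $Q$ denote the intersection of half-spaces displayed in the statement. For $\mc P_\Phi\subseteq Q$: from $V_{\{i\}}=\{\al\in\Phi^+\mid(\al,\breve\omega_i)=m_i\}$ we get $F_{\{i\}}\subseteq\{x\mid(x,\breve\omega_i)=m_i\}$, while $\mc P_\Phi\subseteq\{x\mid(x,\breve\omega_i)\le m_i\}$ as recalled in Subsection \ref{rootpolytopes}. Applying $w\in W$ (an isometry preserving $\mc P_\Phi$) yields $\mc P_\Phi\subseteq\{x\mid(x,w\breve\omega_i)\le m_i\}$, with $wF_{\{i\}}$ on the bounding hyperplane; and since $W\la\Pi\setminus\{\al_i\}\ra$ fixes $\breve\omega_i$, this half-space depends only on the coset $wW\la\Pi\setminus\{\al_i\}\ra$, so the family in the statement is well defined and $\mc P_\Phi\subseteq Q$. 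Conversely, by the first paragraph every facet of $\mc P_\Phi$ equals some $wF_{\{i\}}$ and hence lies on the boundary of one of the listed half-spaces, with $\mc P_\Phi$ on the correct side; so the facet half-spaces of $\mc P_\Phi$ form a subfamily of ours, and as $\mc P_\Phi$ is full-dimensional it equals the intersection of its facet half-spaces. Thus $Q\subseteq\mc P_\Phi$, hence $Q=\mc P_\Phi$.

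It remains to prove minimality, i.e. that the listed inequalities are pairwise distinct and none can be dropped. They are distinct: for a fixed $\al_i$, two representatives $w,w'\in\mc L(W^{\al_i})$ define the same bounding hyperplane only if $wF_{\{i\}}=w'F_{\{i\}}$, i.e. $w^{-1}w'\in\stab F_{\{i\}}=W\la\Pi\setminus\{\al_i\}\ra$, forcing $w=w'$; and for distinct $\al_i,\al_j$ in $\Pi_\mc F$ the faces $F_{\{i\}},F_{\{j\}}$ lie in distinct $W$-orbits by Theorem \ref{CM}, so the corresponding facets, and hence their hyperplanes, are distinct. None can be dropped: each half-space $\{x\mid(x,w\breve\omega_{\al_i})\le m_i\}$ carries the facet $wF_{\{i\}}$ of $\mc P_\Phi$ on its boundary, so deleting it strictly enlarges the intersection. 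Since a full-dimensional polytope has, up to positive rescaling of the individual inequalities, a unique irredundant description by closed half-spaces, and our hyperplanes are exactly the facet hyperplanes with the normalizations above, the displayed system is precisely this minimal one.

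I expect the only genuine work to lie in the first paragraph: identifying which $F_I$ are facets, computing $\partial\{i\}$, and hence pinning down $\stab F_{\{i\}}$ so that it matches the subgroup $W\la\Pi\setminus\{\al_i\}\ra$ used to index the cosets; this is a matter of carefully unwinding parts (2)--(5) of Theorem \ref{CM}. The remaining steps are the standard dictionary between facets and irredundant half-space descriptions of a full-dimensional polytope.
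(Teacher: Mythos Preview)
Your proof is correct and follows exactly the route the paper intends: the corollary is stated here as an immediate consequence of Theorem~\ref{CM} (it is recalled from \cite{C-M} without a separate proof), and your argument---identifying the facets as the $W$-translates of the $F_{\{i\}}$ with $\al_i\in\Pi_{\mc F}$, computing $\partial\{i\}=\{i\}$ to get the stabilizer, and then invoking the standard facet/half-space dictionary for full-dimensional polytopes---is precisely how one unpacks that implication.
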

\smallskip

\subsection{Hyperplane arrangements.}\label{hyp-arrang} 
We follow \cite{StaHA} for notation and terminology concerning hyperplane arrangements. Given a hyperplane  arrangement $\mc H$ in $\mathbb R^n$, a region of $\mc H$ is a connected component of the complement $\mathbb R^n \setminus \bigcup_{H \in\mc  H} H$ of the hyperplanes. 
We let  $\mc L(\mc H)$ be the intersection poset of $\mc H$, i.e. the set of all nonempty intersection of hyperplanes in $\mc H$ (including $\mathbb R^n$ as the intersection over the empty set) partially ordered by reverse inclusion and $\chi_\mc H(t)$ be the characteristic polynomial of $\mc H$:  
$$
\chi_\mc H(t)=\sum_{x\in \mc L(\mc H)} \mu(x)t^{\dim x}
$$
($\mu(x)$ is the M\"{o}bius function from the bottom element $\mathbb R^n$ of $\mc L(\mc H)$).
A face of $\mc H$ is a set $\emptyset \neq F = \ov{R} \cap x$, where $\ov R$ is the closure of a region $R$ and $x \in \mc L(\mc H)$. 
We denote by $f_{\mc H}(x)$ the face polynomial of $\mc H$:
$f_{\mc H}(x) =\sum\limits_{F}  x^{\dim F}$ (the sum is over all faces $F$).
\par

The following result is due to Zaslavsky \cite{Z}. 

\begin{thm}
\label{zas}
The number of regions of an arrangement $\mc H$ in an $n$-dimensional real vector space is $(-1)^n \chi_\mc H(-1).$
\end{thm}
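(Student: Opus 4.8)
The plan is to prove the identity $r(\mc H)=(-1)^n\chi_{\mc H}(-1)$, where $r(\mc H)$ denotes the number of regions, by induction on the cardinality $|\mc H|$, using the classical \emph{deletion--restriction} technique. Fix a hyperplane $H_0\in\mc H$ and form the \emph{deletion} $\mc H'=\mc H\setminus\{H_0\}$, an arrangement in the same $n$-dimensional space, and the \emph{restriction} $\mc H''=\{H\cap H_0\mid H\in\mc H',\ \emptyset\neq H\cap H_0\neq H_0\}$, an arrangement inside the $(n-1)$-dimensional space $H_0$. The whole argument rests on two recursions that I would establish first: the region recursion $r(\mc H)=r(\mc H')+r(\mc H'')$, and the characteristic-polynomial recursion $\chi_{\mc H}(t)=\chi_{\mc H'}(t)-\chi_{\mc H''}(t)$.

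For the region recursion, I would first observe that, since $\mc H'\subseteq\mc H$, every region of $\mc H$ is contained in a unique region of $\mc H'$, and that a region $R$ of $\mc H'$, being convex, either lies entirely on one (closed) side of $H_0$ and survives as a single region of $\mc H$, or is met by $H_0$ in its interior and is split into exactly two regions of $\mc H$. Writing $k$ for the number of regions of $\mc H'$ through which $H_0$ passes, this gives $r(\mc H)=(r(\mc H')-k)+2k=r(\mc H')+k$. It then remains to identify $k$ with $r(\mc H'')$: the assignment $R\mapsto R\cap H_0$ is a bijection from the regions of $\mc H'$ met by $H_0$ onto the regions of the restricted arrangement $\mc H''$ inside $H_0$. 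Verifying that this map is well defined and bijective is the geometric heart of this step.

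For the characteristic-polynomial recursion, I would use Whitney's expansion
$$
\chi_{\mc H}(t)=\sum_{\mc B}(-1)^{|\mc B|}\,t^{\dim\bigcap\mc B},
$$
the sum ranging over all subarrangements $\mc B\subseteq\mc H$ with $\bigcap\mc B\neq\emptyset$ (which I would first derive from the definition $\chi_{\mc H}(t)=\sum_{x\in\mc L(\mc H)}\mu(x)t^{\dim x}$ by Möbius inversion on the intersection poset $\mc L(\mc H)$). Splitting this sum according to whether $H_0\in\mc B$, the terms with $H_0\notin\mc B$ reassemble exactly into $\chi_{\mc H'}(t)$; for the terms with $H_0\in\mc B$, I would write $\mc B=\{H_0\}\sqcup\mc C$ with $\mc C\subseteq\mc H'$, note that $(-1)^{|\mc B|}=-(-1)^{|\mc C|}$ and $\bigcap\mc B=H_0\cap\bigcap\mc C$, and group the contributions by the flat of $\mc H''$ they produce, so that they reassemble into $-\chi_{\mc H''}(t)$.

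Finally, the induction. The base case $|\mc H|=0$ is immediate: $r(\mc H)=1$, $\chi_{\mc H}(t)=t^n$, and $(-1)^n\chi_{\mc H}(-1)=1$. For the inductive step, the inductive hypothesis applies to $\mc H'$ in dimension $n$ and to $\mc H''$ in dimension $n-1$, giving $(-1)^n\chi_{\mc H'}(-1)=r(\mc H')$ and $(-1)^{n-1}\chi_{\mc H''}(-1)=r(\mc H'')$; combining these with the two recursions yields
$$
(-1)^n\chi_{\mc H}(-1)=(-1)^n\chi_{\mc H'}(-1)-(-1)^n\chi_{\mc H''}(-1)=r(\mc H')+r(\mc H'')=r(\mc H),
$$
which closes the induction. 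I expect the main obstacle to be the second recursion, specifically the regrouping of the Whitney sum producing $-\chi_{\mc H''}(t)$: the passage $\mc C\mapsto\{H\cap H_0\mid H\in\mc C\}$ is many-to-one, since distinct hyperplanes of $\mc H'$ may meet $H_0$ in the same flat, so the collapse must be controlled at the level of the Möbius function of the restricted intersection poset $\mc L(\mc H'')$ rather than naively on subsets. Once this combinatorial identity is in hand, everything else is bookkeeping with signs and dimensions.
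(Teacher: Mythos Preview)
The paper does not prove this theorem; it merely states it as a known result due to Zaslavsky \cite{Z} and uses it later to re-derive the region count $2^{n+1}-2$ in type $A_n$. So there is no proof in the paper to compare against.

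Your proposal is the standard deletion--restriction proof of Zaslavsky's theorem and is correct in outline. The one point you flag as the main obstacle --- the regrouping of the Whitney sum over $\mc B\ni H_0$ into $-\chi_{\mc H''}(t)$ --- is indeed the only nontrivial step, and your diagnosis is right: one must work at the level of the M\"obius function of $\mc L(\mc H'')$ rather than naively on subsets, because several hyperplanes of $\mc H'$ can restrict to the same hyperplane of $\mc H''$. The cleanest way to handle this is not to prove the Whitney-sum identity directly but to argue on the intersection posets: there is a natural embedding $\mc L(\mc H'')\hookrightarrow\mc L(\mc H)$ whose image is the interval $[H_0,\hat 0]$ (elements lying below $H_0$), and the recursion $\chi_{\mc H}(t)=\chi_{\mc H'}(t)-\chi_{\mc H''}(t)$ then follows from Weisner's identity for the M\"obius function. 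Either route works; once you have both recursions the induction closes exactly as you wrote.
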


\section{A hyperplane arrangement associated to $\mc P_{\Phi}$}
\label{a hyp}
For all irreducible root systems $\Phi$, we consider a central hyperplane arrangement $\mc H_{\Phi}$ associated with the root polytope $\mc P_{\Phi}$ and hence with $\Phi$. 
The hyperplanes of $\mc H_{\Phi}$ are the hyperplanes through the origin spanned by the faces of $\mc P_{\Phi}$ of codimension 2:
$$
\mc H_{\Phi} := \{ \gen\,F \mid \textrm{$F$ face of $\mc
P_{\Phi}$ and $\codim F  = 2$}\}.
$$

For any face $F$ of $\mc
P_{\Phi}$, we denote by $V_F$ the roots lying on $F$:
$$
V_F=F\cap \Phi.
$$
The long roots in $V_F$ are the vertices of the face $F$, hence  $\gen\, F=\gen\, V_F$.  
Clearly, the following result holds.

\begin{pro}
\label{coni=}
Let $\Phi$ be an irreducible root system. Each  cone on a facet of
$\mc P_{\Phi}$ is the closure of a union of regions of 
the hyperplane arrangement $\mc H_{\Phi}$. 
\end{pro}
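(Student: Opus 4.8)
The plan is to show that the cone $\cone(F)$ generated by a facet $F$ is a union of closed regions of $\mc H_\Phi$ together with pieces of the hyperplanes in $\mc H_\Phi$, which amounts to checking that no hyperplane of $\mc H_\Phi$ passes through the interior of $\cone(F)$, or rather — more precisely — that $\cone(F)$ is a union of closures of regions. First I would observe that $F$ is itself a polytope of dimension $n-1$, so the relative interior of $F$ is an open subset of the hyperplane $\gen\,F$ and hence the relative interior of $\cone(F)$ is an $n$-dimensional open cone. The boundary of $F$, as a polytope, is the union of its codimension-$1$ faces, which are exactly the codimension-$2$ faces of $\mc P_\Phi$ contained in $F$; each such face $G$ spans a hyperplane $\gen\,G\in\mc H_\Phi$ by definition of the arrangement.

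Next I would argue that the complement in $\cone(F)$ of the union of the hyperplanes $\gen\,G$, over the codimension-$2$ faces $G\subseteq F$, is exactly the relative interior of $\cone(F)$, which is connected and open in $\real^n$. The key point is that the affine span $\gen\,G$ of a codimension-$1$ face $G$ of the polytope $F$ separates $\gen\,F$ into two halves, and $F$ lies on one side; passing to cones, $\gen\,G$ is a hyperplane of $\real^n$ and the relative interior of $\cone(F)$ lies entirely on one open side of it. Since this holds for every $G$, the open cone $\mathrm{relint}(\cone(F))$ is contained in a single chamber of the subarrangement $\{\gen\,G\mid G\subseteq F,\ \codim G=2\}$, and a fortiori it meets no hyperplane of the bigger arrangement $\mc H_\Phi$ that it is not contained in — but we must also rule out hyperplanes $H\in\mc H_\Phi$ with $H\supseteq \gen\,F$, which cannot happen since $\dim\gen\,F=n-1=\dim H$ forces $H=\gen\,F\notin\mc H_\Phi$ (as $\gen\,F$ has codimension $1$, not spanned by a codimension-$2$ face... unless $F$ itself contains such a hyperplane through $0$). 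Thus $\mathrm{relint}(\cone(F))$ is contained in $\real^n\setminus\bigcup_{H\in\mc H_\Phi}H$, hence in a disjoint union of regions, and being connected, in a single region $R$; therefore $\cone(F)=\ov{\mathrm{relint}(\cone(F))}\subseteq\ov R$, while conversely $\cone(F)$ is a union of faces of $\mc H_\Phi$, so it is a union of closures of regions (indeed just $\ov R$).

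I would then clean this up to the cleanest formulation: $\cone(F)$ is the closure of its relative interior, the relative interior avoids every hyperplane of $\mc H_\Phi$ (no hyperplane can contain the open $n$-cone, and every hyperplane of $\mc H_\Phi$ meeting it would have to contain a boundary point, but a boundary point lies in some $\gen\,G$ and the open cone is on one side), so the relative interior lies in the union of regions of $\mc H_\Phi$; being connected it lies in one region, and taking closures gives the claim. The honest statement one gets is actually that $\cone(F)$ is \emph{the closure of a single region}; but the proposition only asserts ``a union of regions'', which is weaker, so either formulation proves it. For the weaker statement one need not even prove connectivity carefully: it suffices that $\cone(F)\setminus\bigcup_{H\in\mc H_\Phi}H$ is open and dense in $\cone(F)$ and disjoint from all hyperplanes, hence a union of regions, whose closure is $\cone(F)$.

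The main obstacle is the bookkeeping about which hyperplanes of $\mc H_\Phi$ could possibly intersect the relative interior of $\cone(F)$: one must make sure that (i) no $H\in\mc H_\Phi$ contains $\gen\,F$ (automatic by dimension count, since $\gen\,F$ is itself a hyperplane and $\mc H_\Phi$ consists of hyperplanes spanned by \emph{codimension-}2 faces), and (ii) any $H\in\mc H_\Phi$ meeting $\mathrm{relint}(\cone(F))$ would then meet it in relative codimension $1$, forcing a codimension-$2$ face in the interior of $F$, contradicting that the codimension-$2$ faces of $\mc P_\Phi$ inside $F$ are exactly the boundary facets of the polytope $F$. This is precisely where the general polytope fact ``a face of codimension $2$ of $\mc P_\Phi$ lying in the facet $F$ is a facet of the polytope $F$'' is used, and it is exactly this containment structure of faces that makes the statement ``clear'', as the paper says.
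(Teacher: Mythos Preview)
The paper gives no proof of this proposition; it simply says ``Clearly, the following result holds.'' The intended reason is the elementary one: the boundary of $\cone(F)$ is the union of the cones over the facets of $F$, and the facets of $F$ are exactly the codimension-$2$ faces of $\mc P_\Phi$ contained in $F$, so $\partial(\cone(F))\subseteq\bigcup_{H\in\mc H_\Phi}H$. Hence any region of $\mc H_\Phi$ that meets the interior of $\cone(F)$ cannot leave it, and $\cone(F)$ is the closure of the union of such regions.

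Your fallback argument at the very end (``For the weaker statement\dots'') captures exactly this and is correct, though you should say explicitly why $\cone(F)\setminus\bigcup H$ is a \emph{union of full regions} rather than just an open subset of the complement: it is because $\cone(F)$ is closed and its boundary lies in $\bigcup H$, so the set is clopen in $\real^n\setminus\bigcup H$.

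However, your main line of argument is genuinely wrong. You claim that $\mathrm{relint}(\cone(F))$ avoids \emph{every} hyperplane of $\mc H_\Phi$, and hence that $\cone(F)$ is the closure of a \emph{single} region. This is false in general: the paper devotes Section~\ref{ac-si} to showing it holds for types $A_n$ and $C_n$ but \emph{fails} for $B_n$ and $D_n$ (Subsection~4.3 exhibits hyperplanes of $\mc H_\Phi$ passing through the interior of the standard parabolic facet $F_1$). The flaw is in your step (ii): a hyperplane $H=\gen\,G\in\mc H_\Phi$ comes from \emph{some} codimension-$2$ face $G$ of $\mc P_\Phi$, but $G$ need not lie in $F$. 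The intersection $H\cap\cone(F)$ is then just an $(n-1)$-dimensional slice and has no reason to be a face of anything; nothing forces ``a codimension-$2$ face in the interior of $F$.'' So you have proved more than is true, and should drop the single-region claim entirely, keeping only the last paragraph's argument.
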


We are going to compute $\mc H_\Phi$ for all irreducible $\Phi$. It is clear that $\mc H_\Phi$ is $W$-stable. We will determine explicitly the dual vectors 
of the hyperplanes in  $\mc H_\Phi$ and their orbit structure. The results are listed in Table \ref{Tarrangement}. 
\par
By Theorem \ref{CM}, $\mc H_{\Phi}$ consists of the orbits of the hyperplanes spanned by the  standard parabolic faces of codimension 2 for the action of $W$. 
By Theorem \ref{CM}, (3) and (5), such faces are exactly the $F_I$ with
$I=\{i,j\}$ and $\wh\Phi(\wh \Pi\setminus\{\al_i, \al_j\})$ irreducible. 
\par
In Table \ref{Tn-2faces},  we list all the irreducible $(n-1)$-dimensional root
subsystems obtained by removing two finite nodes from the extended
Dynkin diagram. 
If $\al_i$ and $\al_j$ correspond to the removed nodes, which are crossed in the table, and we read the affine node as $-\theta$, then the resulting subsystem is the $(n-1)$-dimensional subsystem of $\Phi$ with root basis $\{\al_k \mid k \neq i,j \} \cup \{-\theta\}$. 
Thus, these are exactly the root subsystems $\Phi(V_F)$, i.e.  the
subsystems generated by the $V_F$,  for all the standard parabolic
$(n-2)$-faces $F$.  

\begin{table}[h]
\caption{The diagrams obtained by removing the crossed nodes yield the Dynkin diagrams of all $\Phi(V_F)$, $F$ standard parabolic $(n-2)$-face. }
\label{Tn-2faces}
\label{Tn-2faces}\centering
\begin{tabular}{l| l l}
\\
{$A_n$}
&
\parbox[c]{5cm}{\raisebox{5pt}{
\begin{tikzpicture}[scale=.6]
\An
\draw{(2.5,0)--(4.5,0)};
\draw[fill=white]{(3,0)circle(3pt)};
\draw[fill=white]{(4,0)circle(3pt)};
\draw{(2.8,.2)--(3.2,-.2) (2.8,-.2)--(3.2,.2)}; 
\draw{(3.8,.2)--(4.2,-.2) (3.8,-.2)--(4.2,.2)}; 
\node[below right] at (2.65,-.1){$\ss i$};
\node[below right] at (3.4,-.1){$\ss i+1\qquad (1\leq i\leq n-1)$};
\end{tikzpicture}
}}
& 
\\ \hline\\
$B_n$
&
\parbox[c]{5cm}{\raisebox{0pt}{
\begin{tikzpicture}[scale=.6]
\Bn
\draw{(.05,.8)--(.55,.6) (.05,.6)--(.55,.8)}; 
\node[anchor=north] at (.3,.7){$\ss 1$};
\draw{(7.8,.2)--(8.2,-.2) (7.8,-.2)--(8.2,.2)}; 
\node[anchor=north] at (8,0){$\ss n$};
\node[below] at (4,-1){$\phantom{n\geq 4}$};
\end{tikzpicture}
}}
&
\parbox[c]{5cm}{\raisebox{0pt}{
\begin{tikzpicture}[scale=.6]
\Bn
\draw{(6.8,.2)--(7.2,-.2) (6.8,-.2)--(7.2,.2)}; 
\node[below] at (7,0){$\ss{n-1}$};
\draw{(7.8,.2)--(8.2,-.2) (7.8,-.2)--(8.2,.2)}; 
\node[below] at (4,-1){$\phantom{n\geq 4}$};
\node[below] at (8.4,0){$\ss{n\phantom{-1}}$};
\node[below] at (4.3,-.7){${n\geq 4}$};
\end{tikzpicture}
}}
\\ \hline\\
{$C_n$}
&
\parbox[c]{5cm}{\raisebox{5pt}{
\begin{tikzpicture}[scale=.6]
\Cn
\draw{(7.8,.2)--(8.2,-.2) (7.8,-.2)--(8.2,.2)}; 
\node[below] at (8.4,0){$\ss{n\phantom{-1}}$};
\draw{(6.8,.2)--(7.2,-.2) (6.8,-.2)--(7.2,.2)}; 
\node[below] at (7,0){$\ss{n-1}$};
\end{tikzpicture}
}}
&
\\ \hline\\
{$D_n$}
&
\parbox[c]{5cm}{\raisebox{5pt}{
\begin{tikzpicture}[scale=.6]
\Dn
\draw{(.05,.8)--(.55,.6) (.05,.6)--(.55,.8)}; 
\node[anchor=north] at (.3,.7){$\ss 1$};
\draw{(7.45,.8)--(7.95,.6) (7.45,.6)--(7.95,.8)}; 
\node[anchor=north] at (8.1,.7){$\ss{r\phantom {-1}}$};
\phantom
{\draw{(7.45,-.8)--(7.95,-.6) (7.45,-.6)--(7.95,-.8)}; 
\node[below] at (8.1,-.7){$\ss{n\phantom {-1}}$};
}
\node[below] at (4,-.7){$\ss r=n-1 \text{ or \ } r=n$};
\end{tikzpicture}
}}
&
\parbox[c]{5cm}{\raisebox{5pt}{
\begin{tikzpicture}[scale=.6]
\Dn
\draw{(7.45,.8)--(7.95,.6) (7.45,.6)--(7.95,.8)}; 
\node[below] at (8.1,.7){$\ss {n-1}$};
\draw{(7.45,-.8)--(7.95,-.6) (7.45,-.6)--(7.95,-.8)}; 
\node[below] at (8.1,-.7){$\ss{n\phantom {-1}}$};
\end{tikzpicture}
}}
\\ \hline\\
{$E_6$}
&
\parbox[c]{5cm}{
\begin{tikzpicture}[scale=.6]
\Esei
\draw{(-.2, 1.2)--(.2,.8) (-.2,.8)--(.2,1.2)}; 
\node[below] at (0,.9){$\ss 1$};
\draw{(4-.2,1.2)--(4+.2,.8) (4-.2,.8)--(4+.2,1.2)}; 
\node[below] at (4,.9){$\ss 6$};
\end{tikzpicture}
}
&
\parbox[c]{5cm}{
\begin{tikzpicture}[scale=.6]
\Esei
\draw{(3-.2,1.2)--(3.2,.8) (3-.2,.8)--(3.2,1.2)}; 
\node[below] at (3,.9){$\ss i$};
\draw{(4-.2,1.2)--(4.2,.8) (4-.2,.8)--(4.2,1.2)}; 
\node[below] at (4,.9){$\ss j$};
\node[below right] at (2.5,-.1){$\ss\{i,j\}=\{1,2\}\text{ or \ }\{5,6\}$};
\end{tikzpicture}
}
\\
\\ \hline\\
{$E_7$}
&
\parbox[c]{5cm}{\raisebox{5pt}{
\begin{tikzpicture}[scale=.6]
\Esette
\draw{(3-.2,-1+.2)--(3+.2,-1-.2) (3-.2,-1-.2)--(3+.2,-1+.2)}; 
\node[anchor=north] at (3,-1-.1){$\ss 2$};
\draw{(6-.2,.2)--(6+.2,-.2) (6-.2,-.2)--(6+.2,.2)}; 
\node[anchor=north] at (6,-.1){$\ss 7$};
\end{tikzpicture}
}}
&
\parbox[c]{5cm}{\raisebox{5pt}{
\begin{tikzpicture}[scale=.6]
\Esette
\draw{(6-.2,.2)--(6+.2,-.2) (6-.2,-.2)--(6+.2,.2)}; 
\node[anchor=north] at (5,-.1){$\ss 6$};
\draw{(5-.2,.2)--(5+.2,-.2) (5-.2,-.2)--(5+.2,.2)}; 
\node[anchor=north] at (6,-.1){$\ss 7$};
\node[anchor=north] at (3,-1-.1){$\phantom{\ss 2}$};
\end{tikzpicture}
}}
\\ \hline\\
{$E_8$}
&
\parbox[c]{5cm}{\raisebox{5pt}{
\begin{tikzpicture}[scale=.6]
\Eotto
\draw{(0-.2,.2)--(0+.2,-.2) (0-.2,-.2)--(0+.2,.2)}; 
\node[anchor=north] at (0,-.1){$\ss 1$};
\draw{(1-.2,.2)--(1+.2,-.2) (1-.2,-.2)--(1+.2,.2)}; 
\node[anchor=north] at (1,-.1){$\ss 3$};
\phantom{\draw{(2-.2,-1+.2)--(2+.2,-1-.2) (2-.2,-1-.2)--(2+.2,-1+.2)}; 
\node[anchor=north] at (2,-1-.1){$\ss 2$};}
\end{tikzpicture}
}}
&
\parbox[c]{5cm}{\raisebox{5pt}{
\begin{tikzpicture}[scale=.6]
\Eotto
\draw{(0-.2,.2)--(0+.2,-.2) (0-.2,-.2)--(0+.2,.2)}; 
\node[anchor=north] at (0,-.1){$\ss 1$};
\draw{(2-.2,-1+.2)--(2+.2,-1-.2) (2-.2,-1-.2)--(2+.2,-1+.2)}; 
\node[anchor=north] at (2,-1-.1){$\ss 2$};
\end{tikzpicture}
}}
\\ \hline\\
{$F_4$}
&
\parbox[c]{5cm}{\raisebox{5pt}{
\begin{tikzpicture}[scale=.6]
\Fquattro
\draw{(4-.2,.2)--(4+.2,-.2) (4-.2,-.2)--(4+.2,.2)}; 
\node[anchor=north] at (4,-.1){$\ss 4$};
\draw{(3-.2,.2)--(3+.2,-.2) (3-.2,-.2)--(3+.2,.2)}; 
\node[anchor=north] at (3,-.1){$\ss 3$};
\end{tikzpicture}
}}
\\ \hline\\
{$G_{2}$}
&
\parbox[c]{5cm}{
\begin{tikzpicture}[scale=.6]
\Gdue
\draw{(.6,.15)--(.45,0)--(.6,-.15)};
\draw{(0-.2,.2)--(0+.2,-.2) (0-.2,-.2)--(0+.2,.2)}; 
\node[anchor=north] at (0,-.1){$\ss 1$};
\draw{(1-.2,.2)--(1+.2,-.2) (1-.2,-.2)--(1+.2,.2)}; 
\node[anchor=north] at (1,-.1){$\ss 2$};
\end{tikzpicture}
}
\\
\end{tabular}
\end{table}
\bigskip

\begin{pro}
\label{iperpiani}
Let $\Phi$ be an irreducible root system. There exists a subset $H_{\Phi} \subseteq [n]$ such that 
$$
\mc H_{\Phi} = \{ w (\breve{\omega}_{k}) ^{\perp}  \mid w \in W, k \in H_{\Phi}\}.
$$ 
Furthermore, $H_{\Phi}$ is contained in the set of $h\in [n]$ such that the standard parabolic subsystem of $\Phi$ generated by $\Pi\setminus \{\al_h\}$ is irreducible. 
\end{pro}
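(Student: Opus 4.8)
The plan is to reduce the statement to facts about the standard parabolic $(n-2)$-faces via the observation, already made in the text, that $\mc H_\Phi$ is the union of the $W$-orbits of the hyperplanes $\gen\,F_{\{i,j\}}$ with $\wh\Phi(\wh\Pi\setminus\{\al_i,\al_j\})$ irreducible. So it suffices to show that each such linear hyperplane $\gen\,F_{\{i,j\}}$ is of the form $w(\breve\omega_k)^\perp$ for some $w\in W$ and some $k$ for which the standard parabolic subsystem generated by $\Pi\setminus\{\al_k\}$ is irreducible; the set $H_\Phi$ is then defined as the collection of all such indices $k$ that arise.

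First I would fix $I=\{i,j\}$ with $\wh\Phi(\wh\Pi\setminus\Pi_I)$ irreducible and look at the parabolic closure of $\gen\,F_I$. By Theorem \ref{CM}(6), the roots on $F_I$ span the subsystem $\Phi(V_I)$, which has root basis $(\Pi\setminus\Pi_{\ov I})\cup\{-\theta\}$, an $(n-2)$-element independent set; in particular $\gen\,F_I=\gen\,\Phi(V_I)$ is a hyperplane through the origin, and its parabolic closure $\ov{\Phi(V_I)}=\gen\,\Phi(V_I)\cap\Phi$ is itself an $(n-1)$-dimensional root subsystem whose orthogonal complement is the line $(\gen\,\Phi(V_I))^\perp$. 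The key structural input is that an $(n-1)$-dimensional parabolic subsystem of $\Phi$, being obtained by deleting one node from some Weyl-chamber basis, is $W$-conjugate to a maximal standard parabolic subsystem $\Phi(\Pi\setminus\{\al_k\})$: indeed, $\ov{\Phi(V_I)}$ is a parabolic subsystem (the intersection of $\Phi$ with a linear subspace), hence $W$-conjugate to a standard one $\Phi(\Pi_J)$ (this is the standard fact that every parabolic subsystem is $W$-conjugate to a standard parabolic), and counting dimensions forces $|J|=n-1$, say $J=[n]\setminus\{k\}$. Writing $\ov{\Phi(V_I)}=x(\Phi(\Pi\setminus\{\al_k\}))$ for suitable $x\in W$, we get $(\gen\,\Phi(V_I))^\perp=x\big((\gen\,(\Pi\setminus\{\al_k\}))^\perp\big)=\real\, x(\breve\omega_k)$, because $\breve\omega_k$ is exactly the (up to scalar) vector orthogonal to all simple roots except $\al_k$. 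Hence $\gen\,F_I=\big(x(\breve\omega_k)\big)^\perp$, which is the desired form, and taking $W$-orbits on both sides shows the orbit of $\gen\,F_I$ is $\{w(\breve\omega_k)^\perp\mid w\in W\}$.

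It remains to check the final clause, that $k$ can be chosen so that $\Phi(\Pi\setminus\{\al_k\})$ is irreducible. Here I would argue that the subsystem $\ov{\Phi(V_I)}$ is itself irreducible: by Theorem \ref{CM}(6) it contains the connected basis $(\Pi\setminus\Pi_{\ov I})\cup\{-\theta\}$ — more precisely, $\wh\Phi(\wh\Pi\setminus\Pi_I)$ is irreducible by hypothesis, and $\Phi(V_I)$ is obtained from it by reading $\al_0$ as $-\theta$, so its Dynkin diagram is connected; since $\Phi(V_I)\subseteq\ov{\Phi(V_I)}$ has the same rank $n-1$ and $\ov{\Phi(V_I)}$ is parabolic (so cannot acquire extra components beyond the span of $\Phi(V_I)$), $\ov{\Phi(V_I)}$ is irreducible. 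Being $W$-conjugate to $\Phi(\Pi\setminus\{\al_k\})$, the latter is then irreducible as well, which is precisely the containment asserted for $H_\Phi$.

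The main obstacle I anticipate is the claim that $\ov{\Phi(V_I)}=\Phi(V_I)$, i.e. that the span of the roots on $F_I$ meets $\Phi$ in nothing more than $\Phi(V_I)$ — equivalently that the $(n-1)$-dimensional subsystem generated by the face is already parabolically closed. This is where the case analysis of Table \ref{Tn-2faces} is doing real work: one must verify, type by type, that removing two finite nodes from the extended diagram and re-reading the affine node as $-\theta$ produces an $(n-1)$-dimensional subsystem equal to its own parabolic closure, so that it is conjugate to a maximal parabolic rather than to a non-parabolically-closed subsystem of the same rank. If that identification should fail in some case, the hyperplane would still have a dual vector, but it need not be a fundamental coweight; so pinning down that $\Phi(V_I)$ is parabolic (via the explicit bases in Theorem \ref{CM}(6) and the classification in the table) is the crux of the argument, and is what justifies defining $H_\Phi$ inside $[n]$ rather than as a set of arbitrary rational directions.
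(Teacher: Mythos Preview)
Your argument in the first three paragraphs is correct and essentially the same as the paper's: take the parabolic closure $\ov{\Phi(V_I)}$, observe it is irreducible of rank $n-1$ (since it has the same span as the irreducible $\Phi(V_I)$), and then use the standard fact that every parabolic subsystem is $W$-conjugate to a standard one (the paper cites \cite[Chapitre~6, \S1, Proposition~24]{Bou} for this). One minor slip: the basis $(\Pi\setminus\Pi_{\ov I})\cup\{-\theta\}$ has $n-1$ elements, not $n-2$.

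Your final paragraph, however, is confused and should be dropped. The question of whether $\Phi(V_I)=\ov{\Phi(V_I)}$ is \emph{irrelevant} to this proposition. Your own argument already works with the parabolic closure $\ov{\Phi(V_I)}$, which is parabolic by definition regardless of whether $\Phi(V_I)$ is; hence it is $W$-conjugate to some $\Phi(\Pi\setminus\{\al_k\})$ and the dual vector is automatically a Weyl translate of $\breve\omega_k$. Indeed, Proposition~\ref{BEF} shows there \emph{are} cases ($B_n$, $E_8$, $F_4$) where $\Phi(V_I)\subsetneq\ov{\Phi(V_I)}$, yet Proposition~\ref{iperpiani} holds for them without difficulty. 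The case analysis in Table~\ref{Tn-2faces} is used in the paper only to compute $H_\Phi$ explicitly, not to establish Proposition~\ref{iperpiani} itself, which has the short case-free proof you essentially gave.
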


\begin{proof}
Let $F$ be a $(n-2)$-face of $\mc P$. By Theorem \ref{CM}, the 
roots in $F$ generate an irreducible subsystem $\Phi(V_F)$ of rank $n-1$. Hence also the parabolic closure  $\ov{\Phi(V_F)} = \gen\, F \cap \Phi$ is irreducible
and has rank $n-1$. By \cite[Chapitre 6, \S 1, Proposition 24]{Bou}, it follows that there exist  $k\in [n]$ and  $w$ in $W$ such that the standard parabolic subsystem generated by $\Pi\setminus \{\al_k\}$ is irreducible and is transformed by $w$ into $\ov{\Phi(V_F)}$. Hence $\gen\, F=w(\breve\omega_k)^\perp$.
\end{proof}

For $k\in [n]$, let $[\breve\omega_k^\perp]$ be the orbit of $\breve\omega_k^\perp$ under the action of $W$,
$$
[\breve \omega_k^\perp]:=\{w(\breve\omega_k)^\perp\mid w\in W\}.
$$  
We note that, for all $k, j\in [n]$,  $[\breve \omega_k^\perp] = [\breve
\omega_j^\perp]$ if and only if the standard parabolic subsystems $\Phi(\Pi
\setminus \{\al_k\})$ and $\Phi(\Pi \setminus \{\al_j\})$ can be transformed
each into the other by $W$. In fact, $\Phi(\Pi \setminus \{\al_k\}) = \breve
\omega_k ^{\perp} \cap \Phi$ and hence for each $w\in W$, $w( \breve \omega_k
^{\perp})= \breve \omega_j ^{\perp}$ if and only if $w(\Phi(\Pi \setminus
\{\al_k\}))= \Phi(\Pi \setminus \{\al_j\})$. 
\par
Thus, by Proposition \ref{iperpiani}, in order to determine $\mc H_\Phi$ and its orbit structure for the action of $W$, we need to determine, for each standard parabolic $(n-2)$-face $F$,  the parabolic
closure $\ov{\Phi(V_F)}$ and  a standard parabolic subsystem in its orbit under
the action of $W$. 
Moreover,  we must check whether these standard parabolic subsystems can be transformed each into the other by $W$.
\par

For computing explicitly the parabolic closures of the subsystems in Table
\ref{Tn-2faces},  we use the following lemma.

\begin{lem}
\label{chiusuraparabolica} 
Let $I\subseteq[n]$ be such that 
$\dim F_I = n-|I|$. For any $b\in\real^+$, let $S_b=\{\ga\in \Phi\mid
c_i(\ga)=\frac{m_i}{b} \ \forall \ i\in I\}$, and let $d$ be the maximal
positive
number such that $S_d\neq \emptyset$.
 
Then:
\item{(1)} $S_d$ has a minimum (in the root poset) $\al$ and $\{\al\}\cup
(\Pi\setminus \Pi_I)$ is a root
basis for $\ov{\Phi(V_I)}$;
\item{(2)}  $\Phi(V_I) = \ov{\Phi(V_I)}$ if and only if $d=1$.
In particular, if $\gcd\{m_i\mid i\in I\}=1$, then $\Phi(V_I)=\ov{\Phi(V_I)}$ . 
\end{lem}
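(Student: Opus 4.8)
The plan is to identify the parabolic closure $\ov{\Phi(V_I)}$ explicitly as a ``grading slice'' of $\Phi$ and then deduce both assertions from that description (the case $I=\emptyset$ being vacuous, I would assume $I\neq\emptyset$). First I would collect the elementary reductions. Since $(\al,\breve\omega_i)=c_i(\al)$, one has $V_I=S_1$, and any root lying in some $S_b$ is automatically positive because its $i$-th coordinate $m_i/b$ is positive; moreover $\theta\in V_I=S_1$, so $d$ is well defined and $1\le d\le\min_{i\in I}m_i$. As every element of $V_I$ has $i$-th coordinate $m_i\neq 0$ for $i\in I$, the affine hull of $V_I$ avoids the origin, so $\dim\gen V_I=\dim F_I+1=n-|I|+1$; and since $\ga-\theta\in\gen(\Pi\setminus\Pi_I)$ for all $\ga\in V_I$, the space $\gen V_I$ is contained in, hence (by equality of dimensions) equal to, $\gen\bigl((\Pi\setminus\Pi_I)\cup\{\theta\}\bigr)$. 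Writing a root of $\gen V_I\cap\Phi$ as $\ga'+\lambda\theta$ with $\ga'\in\gen(\Pi\setminus\Pi_I)$ then yields
\[
\ov{\Phi(V_I)}=\gen V_I\cap\Phi=\{\ga\in\Phi\mid c_i(\ga)/m_i\text{ is the same for all }i\in I\}=:\Psi ,
\]
and on $\gen\Psi$ the rule $\mu(\ga)=c_i(\ga)/m_i$ (any fixed $i\in I$) is a linear functional, nonzero since $\mu(\theta)=1$.

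For part (1) the idea is to pin down $\al$ as the unique ``extra'' simple root of $\Psi$. Each $\al_k$ with $k\notin I$ lies in $\Psi$, is indecomposable in $\Phi$ and hence simple in $\Psi$ for the positive system $\Psi\cap\Phi^+$, and these $n-|I|$ roots lie in $\ker\mu$; as $\mathrm{rk}\,\Psi=\dim\gen V_I=n-|I|+1$, the simple system of $\Psi$ has one further root $\al\in\Psi\cap\Phi^+$, with $\mu(\al)>0$ (it cannot vanish, or else $\al$ would lie in $\gen(\Pi\setminus\Pi_I)$). Expanding any root of $\Psi\cap\Phi^+$ in the simple roots of $\Psi$ shows its $\mu$-value equals the coefficient of $\al$ times $\mu(\al)$, a non-negative integer multiple of $\mu(\al)$; so $\mu(\al)$ is the least positive value of $\mu$ on $\Psi$. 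Since $S_b\neq\emptyset$ exactly when $1/b$ is a positive value of $\mu$ on $\Psi$, this forces $1/d=\mu(\al)$; evaluating at $\theta$ then identifies $d$ with the coefficient of $\al$ in $\theta$, so $d\in\nat$ and $S_d=\{\ga\in\Psi\cap\Phi^+\mid\text{the coefficient of }\al\text{ in }\ga\text{ is }1\}$. For such a $\ga$, $\ga-\al$ is a non-negative integer combination of the $\al_k$ with $k\notin I$, so $\ga\ge\al$ in the root poset; hence $\al=\min S_d$ and $\{\al\}\cup(\Pi\setminus\Pi_I)$ is a root basis of $\ov{\Phi(V_I)}=\Psi$.

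For part (2), note that $\dim F_I=n-|I|$ together with parts (2) and (3) of Theorem \ref{CM} forces $\ov I=I$, so Theorem \ref{CM}(6) gives that $(\Pi\setminus\Pi_I)\cup\{-\theta\}$ is a root basis of $\Phi(V_I)$; since $\mu$ vanishes on $\Pi\setminus\Pi_I$ and $\mu(-\theta)=-1$, the functional $\mu$ is integer-valued on $\Phi(V_I)$. If $\Phi(V_I)=\ov{\Phi(V_I)}=\Psi$, then $\al\in\Phi(V_I)$, so $1/d=\mu(\al)\in\ganz$ and thus $d=1$. Conversely, if $d=1$ then $S_d=V_I$, so $\al=\min V_I\in V_I\subseteq\Phi(V_I)$, and since also $\Pi\setminus\Pi_I\subseteq\Phi(V_I)$ by Theorem \ref{CM}(6), the root basis $\{\al\}\cup(\Pi\setminus\Pi_I)$ of $\Psi$ lies inside $\Phi(V_I)$, whence $\Psi\subseteq\Phi(V_I)\subseteq\Psi$. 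Finally, $d$ divides each $m_i$ with $i\in I$, so $\gcd\{m_i\mid i\in I\}=1$ forces $d=1$, and then $\Phi(V_I)=\ov{\Phi(V_I)}$ by the previous case.

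The step I expect to be the crux is the identification of $\ov{\Phi(V_I)}$ with the slice $\Psi$ and the recognition that its extra simple root coincides with the root-poset minimum of $S_d$: once that dictionary is set up, part (2) reduces to comparing the two bases $\{\al\}\cup(\Pi\setminus\Pi_I)$ and $\{-\theta\}\cup(\Pi\setminus\Pi_I)$ of $\ov{\Phi(V_I)}$ and $\Phi(V_I)$ through $\mu$. A secondary point needing care is that $S_d$ has a genuine minimum in the root poset rather than merely a ``$\mu$-smallest'' element; this is precisely where one uses that $\ga-\al$ remains a non-negative combination of the $\al_k$ with $k\notin I$.
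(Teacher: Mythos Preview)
Your proof is correct and follows essentially the same approach as the paper's: both identify $\ov{\Phi(V_I)}$ with the slice $\Psi=\gen V_I\cap\Phi$, show that its simple system for the inherited positive system is $(\Pi\setminus\Pi_I)$ together with one extra root, and recognize that extra root as $\min S_d$. The only organizational difference is that the paper first decomposes $\Phi(V_I)=S_1^\pm\cup\Phi(\Pi\setminus\Pi_I)$ and $\ov{\Phi(V_I)}=\bigl(\bigcup_b S_b^\pm\bigr)\cup\Phi(\Pi\setminus\Pi_I)$ to read off part~(2) directly, and then argues part~(1), whereas you establish (1) first via the functional $\mu$ and deduce (2) by comparing the two bases through $\mu$; both routes are short and yield the same content.
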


\begin{proof}
Let $b\in\real^+$ be such that $S_b \neq \emptyset$.
First notice that $\frac{m_i}{b}$ must be an
integer between $1$ and $m_i$, for all $i\in I$, and, in particular, $b$ must be
a rational number between $1$ and $\min\{m_i\mid i\in I\}$. Moreover, if $b=z/t$ with
$z$
and $t$ relatively prime integers, then $z|m_i$ for all $i\in I$: therefore, if
$\gcd\{m_i\mid i\in I\}=1$, then $d=1$.
Recall from Theorem \ref{CM} that $\Phi(V_I)$ has basis $(\Pi \setminus \Pi_I)
\cup \{ -\theta \}$, hence 
$\Phi(V_I)= S_1^\pm\cup \Phi(\Pi \setminus \Pi_I)$, where we set
$S_b^\pm=S_b\cup-S_b$, for all $b$. Moreover,
since
$\gen\,V_I$ is generated by $\theta$ and $\Pi
\setminus \Pi_I$,
$\ov{\Phi(V_I)}=\left( \bigcup_{b\in \real^+} S_b^\pm \right)\cup \Phi(\Pi
\setminus \Pi_I)$. 
Therefore,  we obtain that $\Phi(V_I)=\ov{\Phi(V_I)}$ if and only if $d=1$. It
remains to prove
(1). 
Let $B$ be the basis of $\ov{\Phi(V_I)}$ such that the set of positive roots of  
 $\ov{\Phi(V_I)}$ with respect to $B$ is $\ov{\Phi(V_I)}\cap \Phi^+$. Then 
it is clear that $B$ must contain $\Pi \setminus \Pi_I$ and hence any
minimal root in $S_d$.
It follows that $S_d$ has a minimum $\al$ and that $B=\{\al\}\cup
(\Pi \setminus \Pi_I)$. 
\end{proof} 

In the proof of the following proposition, we detail the explicit construction
of the parabolic closures in the few cases in which $\Phi(V_I)$ is not
parabolic.   

\begin{pro}\label{BEF}
All the root subsystems that occur in Table \ref{Tn-2faces} are parabolic
subsystems of $\Phi$, except the second one of the $B_n$ case, the first one of
$E_8$, and the one of $F_4$. In these cases $\ov{\Phi(V_I)}$ is of type $B_{n-1}$,
$E_7$, and $B_3$, respectively. 
\end{pro}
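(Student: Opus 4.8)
The plan is to run through Table \ref{Tn-2faces} one row at a time and apply Lemma \ref{chiusuraparabolica}. A row that deletes the nodes $i$ and $j$ corresponds to $I=\{i,j\}$ with $\wh\Phi(\wh\Pi\setminus\Pi_I)$ irreducible; by Theorem \ref{CM}(5) this forces $\ov I=I$, hence $\dim F_I=n-2$, so Lemma \ref{chiusuraparabolica} applies, and by Theorem \ref{CM}(6) the set $(\Pi\setminus\Pi_I)\cup\{-\theta\}$ is a root basis of $\Phi(V_I)$. Part (2) of the lemma says that $\Phi(V_I)$ is parabolic as soon as $\gcd(m_i,m_j)=1$; moreover, if $S_b\neq\emptyset$ then $m_i/b$ and $m_j/b$ are positive integers that are $\leq m_i$ and $\leq m_j$ respectively, which forces $1\leq b\leq\min(m_i,m_j)$.

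First I would dispose of all the parabolic rows by inspecting the marks $m_k=c_k(\theta)$. With the standard values ($m_k=1$ for all $k$ in type $A_n$; $m_1=1$ and $m_k=2$ otherwise in $B_n$; $m_k=2$ for $k<n$ and $m_n=1$ in $C_n$; $m_1=m_{n-1}=m_n=1$ and $m_k=2$ otherwise in $D_n$; and $(1,2,2,3,2,1)$, $(2,2,3,4,3,2,1)$, $(2,3,4,6,5,4,3,2)$, $(2,3,4,2)$, $(3,2)$ for $E_6,E_7,E_8,F_4,G_2$), one checks directly that $\gcd(m_i,m_j)=1$ for every deleted pair $\{i,j\}$ in Table \ref{Tn-2faces} except precisely three: $\{n-1,n\}$ in $B_n$ (marks $2,2$), $\{1,3\}$ in $E_8$ (marks $2,4$), and $\{3,4\}$ in $F_4$ (marks $4,2$). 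For all other rows, Lemma \ref{chiusuraparabolica}(2) immediately gives that $\Phi(V_I)$ is parabolic, which is the first assertion of the statement.

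It remains to treat the three exceptional rows. In each of them the constraint above forces $b\in\{1,2\}$, so $d\in\{1,2\}$, and by Lemma \ref{chiusuraparabolica}(2), $\Phi(V_I)$ fails to be parabolic precisely when $S_2\neq\emptyset$; in that case $\alpha:=\min S_2$ exists and $\{\alpha\}\cup(\Pi\setminus\Pi_I)$ is a root basis of $\ov{\Phi(V_I)}$ by Lemma \ref{chiusuraparabolica}(1). For $B_n$ with $I=\{n-1,n\}$, in the standard coordinates $S_2=\{e_1,\dots,e_{n-1}\}$, so $\alpha=e_{n-1}=\al_{n-1}+\al_n$, and $\{\al_1,\dots,\al_{n-2},\alpha\}$ is the chain $\al_1-\cdots-\al_{n-2}-\alpha$ with $\alpha$ short, i.e.\ of type $B_{n-1}$. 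For $F_4$ with $I=\{3,4\}$, a short computation gives $S_2=\{\al_2+2\al_3+\al_4,\ \al_1+\al_2+2\al_3+\al_4,\ \al_1+2\al_2+2\al_3+\al_4\}$, so $\alpha=\al_2+2\al_3+\al_4$, which is short and satisfies $(\alpha,\al_2)=0$ and $(\alpha,\al_1)\neq0$; thus $\{\al_1,\al_2,\alpha\}$ is the chain $\al_2-\al_1-\alpha$ with $\alpha$ a short end node, i.e.\ of type $B_3$. For $E_8$ with $I=\{1,3\}$, one exhibits a root $\ga$ with $c_1(\ga)=1$ and $c_3(\ga)=2$ (so $S_2\neq\emptyset$), sets $\alpha=\min S_2$, and checks that $\{\alpha,\al_2,\al_4,\al_5,\al_6,\al_7,\al_8\}$ has Cartan matrix of type $E_7$: the $A_6$-chain $\al_2-\al_4-\al_5-\al_6-\al_7-\al_8$ acquires, through $\alpha$, a trivalent node with arm lengths $1,2,3$.

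The step I expect to be the main obstacle is the $E_8$ computation: unlike the $B_n$ and $F_4$ cases there is no small coordinate model making $\min S_2$ transparent, so one must either locate it by hand in a root table of $E_8$ or argue indirectly. A clean indirect route: $\ov{\Phi(V_I)}$ is an irreducible, simply-laced root system of rank $7$ (being the parabolic closure of the irreducible subsystem $\Phi(V_I)$, which is irreducible as in the proof of Proposition \ref{iperpiani}), hence of type $A_7$, $D_7$, or $E_7$; deleting $\al_1$ and $\al_3$ from the affine $E_8$ diagram shows $\Phi(V_I)$ is of type $A_7$, and it is a proper subsystem of $\ov{\Phi(V_I)}$ since $d=2\neq1$; as the $A_7$ root lattice cannot embed at full rank in the $D_7$ root lattice (a discriminant count: $8$ versus $4$ forces the index to satisfy $m^2=2$), the only possibility is $E_7$. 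I would present the direct computation in the text and keep this argument in reserve.
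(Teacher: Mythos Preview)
Your proof is correct and follows essentially the same route as the paper: use the $\gcd$ criterion from Lemma \ref{chiusuraparabolica}(2) to dispose of all but the three exceptional pairs, then for each exception exhibit an element of $S_2$ (so $d=2$) and identify the Dynkin type of the basis $\{\alpha\}\cup(\Pi\setminus\Pi_I)$ from Lemma \ref{chiusuraparabolica}(1). The paper does exactly this, giving the explicit minimum $\alpha=\al_1+\al_2+2\al_3+2\al_4+\al_5$ in the $E_8$ case (one checks $\alpha$ is orthogonal to $\al_2,\al_4,\al_5,\al_7,\al_8$ and not to $\al_6$, producing the $E_7$ branching at $\al_6$).

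The one genuine addition in your proposal is the discriminant argument for $E_8$: since $\ov{\Phi(V_I)}$ is simply-laced irreducible of rank $7$ strictly containing an $A_7$, and the index equation $8=m^2\cdot 4$ rules out $D_7$, only $E_7$ remains. This is a clean way to avoid locating $\min S_2$ by hand and is not in the paper; it trades an explicit root-table lookup for a lattice invariant. Either approach is fine. One small point: your bound $1\le b\le\min(m_i,m_j)$ alone does not force $b\in\{1,2\}$; you need the integrality of $m_i/b$ with $m_i=2$ (present in each exceptional pair) to exclude non-integer $b$.
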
 

\begin{proof}
Apart from three exceptions, the diagrams in Table \ref{Tn-2faces} are obtained by removing nodes of indexes $i, j$ such that $\gcd(m_i, m_j)=1$, thus they correspond to parabolic subsystems, by Lemma \ref{chiusuraparabolica}. 
The three exceptions are obtained when removing nodes of indexes: $i=n-1, j=n$ in type $B_n$; $i=1,j=3$ in type $E_8$; $i=3, j=4$ in type $F_4$, where in all these cases we have $\gcd(m_i,m_j)=2$. In these cases, the
subsystems $\Phi(V_I)$ are nonparabolic subsystems of type $D_{n-1}$, $A_7$, and $A_3$, respectively. The second one is nonparabolic by Lemma  \ref{chiusuraparabolica}. The first and the third are non parabolic since in types $B_n$ and $F_4$ there are no parabolic subsystems of types $D_{n-1}$ and $A_3$, respectively. 
By Lemma \ref{chiusuraparabolica}, in these cases a basis for $\ov{\Phi(V_I)}$ can be obtained from  $\Pi$ by removing $\al_i$ and $\al_j$ and adding the
minimal root $\al$ such that $c_i(\al)=\frac{m_i}{2}$, 
$c_j(\al)=\frac{m_j}{2}$. By inspection, we see that this minimal root is $\al=\al_{n-1}+\al_n$ in case $B_n$, $\al=\al_1+\al_2+2\al_3+2\al_4+\al_5$ in case $E_8$,
and  $\al=\al_{2}+2\al_3+\al_4$ in case $F_4$. By a direct check, we see that adding this root to $\Pi\setminus\{\al_i, \al_j\}$ produces diagrams of type $B_{n-1}$, $E_7$, and $B_3$, respectively. 
\end{proof}

It is clear that  if $\Phi(\Pi \setminus \{\al_k\})$ and $\Phi(\Pi \setminus
\{\al_j\})$ have different Dynkin diagrams, then they cannot be transformed each into the other by $W$, hence $[\breve \omega_k^\perp] \neq
[\breve \omega_j^\perp]$.
\par

However, in general the Dynkin graph does not determine the orbit of the
parabolic subsystem; two standard parabolic subsystems having isomorphic Dynkin
diagrams may  or may not be transformed one into the other by the Weyl group.
\par 

From Table \ref{Tn-2faces} and Proposition \ref{BEF} we see that 
in all cases except $A_n$ and $D_n$,  the Dynkin diagram of $\ov{\Phi(V_F)}$
determines uniquely the orbit of its hyperplane, since in these cases there is a unique standard parabolic root subsystem with the same  Dynkin diagram as $\ov{\Phi(V_F)}$, for all $(n-2)$-standard parabolic faces $F$.
The cases $A_n$ and $D_n$  require a (brief) direct 
check. For type $A_n$, it suffices to notice that there exist exactly two
standard parabolic subsystems of rank $n-1$, which are of type $A_{n-1}$ and
can be transformed each into the other by $W$. In type $D_n$, the two standard
parabolic faces obtained for $I=\{1,n\}$ and  $I=\{1,n-1\}$ give two 
parabolic subsystems of type $A_{n-1}$: these can be transformed each into the
other if and only if $n$ is odd.
\par
In Table \ref{Tarrangement}, for each $\Phi$ we list the standard parabolic subsystems corresponding to the $(n-2)$-faces and specify the orbit $[\omega_k^\perp]$ of the hyperplane they span. Thus, $\mc H_\Phi$ is the union of the $[\omega_k^\perp]$ that occur in the table. 
The listed orbits  are distinct,  unless explicitly noted.
\par 

We remark that though different standard  parabolic faces $F$ and $F'$ belong to different orbits under the action of $W$ (Theorem \ref{CM}), it may happen that the hyperplanes $\gen\, F$ and $\gen\, F'$ belong to the same orbit of $W$. This happens in type $A_n$ (for all $n\geq 3$) and $D_n$ with $n$ odd, as we can see by comparing Table \ref{Tn-2faces}, where the listed subsystems are in bijection with the orbits of $(n-2)$-faces,  and Table \ref{Tarrangement}, where  the orbits of hyperplanes generated by the $(n-2)$-faces are listed.

\newcommand\Anfdue
{\draw[gray!50]{(1,0)--(2,0)};
\draw{(2,0)--(2.5,0) (6.5,0)--(8,0)};
\draw[dashed]{ (2.5,0)--(6.5,0)};
\foreach \x in {2, 7, 8}
\draw[fill=white]{(\x,0) circle(3pt)};
\draw[gray!50][fill=white]{(1,0) circle(3pt)};
}

\newcommand\Bnfduea
{\draw{(1,0)--(2,0)--(2.5,0) 
 (5.5,0)--(6,0)--(7,0)};
\draw[dashed]{ (2.5,0)--(5.5,0)};
\draw[gray!50]{(7,-1.5pt)--(8,-1.5pt)};
\draw[gray!50]{(7,1.5pt)--(8,1.5pt)};
\draw[gray!50]{(7.4,.15)--(7.55,0)--(7.4,-.15)};
\foreach \x in {1,2,6,7}
\draw[fill=white]{(\x,0) circle(3pt)};
\draw[gray!50][fill=white]{(8,0) circle(3pt)};}

\newcommand\Bnfdueb
{\draw[gray!50]{(1,0)--(2,0)}; 
\draw{(2,0)--(2.5,0) 
 (5.5,0)--(6,0)--(7,0)};
\draw[dashed]{ (2.5,0)--(5.5,0)};
\draw{(7,-1.5pt)--(8,-1.5pt)};
\draw{(7,1.5pt)--(8,1.5pt)};
\draw{(7.4,.15)--(7.55,0)--(7.4,-.15)};
\foreach \x in {2,6,7,8}
\draw[fill=white]{(\x,0) circle(3pt)};
\draw[gray!50][fill=white]{(1,0) circle(3pt)};
}

\newcommand\Cnfdue
{\draw[gray!50]{(1,0)--(2,0)}; 
\draw{(2,0)--(2.5,0) (5.5,0)--(6,0)--(7,0)};
\draw[dashed]{ (2.5,0)--(5.5,0)};
\draw{(7,-1.5pt)--(8,-1.5pt)};
\draw{(7,1.5pt)--(8,1.5pt)};
\draw{(7.6,.15)--(7.45,0)--(7.6,-.15)};
\foreach \x in {2,6,7,8}
\draw[fill=white]{(\x,0) circle(3pt)};
\draw[gray!50][fill=white]{(1,0) circle(3pt)};}

\newcommand\Dnfduea
{\draw{(1,0)--(2.5,0)  (5.5,0)--(7,0)--(7.7,-.7)};
\draw[dashed]{(2.5,0)--(5.5,0)};
\draw[gray!50]{(7,0)--(7.7,.7)}; 
\foreach \x in {1,2,6,7}
\draw[fill=white]{(\x,0) circle(3pt)};
\draw[gray!50][fill=white]{(7.7,.7) circle(3pt)};
\draw[fill=white]{(7.7,-.7) circle(3pt)};}

\newcommand\Dnfdueb
{\draw[gray!50]{(1,0)--(2,0)}; 
\draw{(2,0)--(2.5,0) (5.5,0)--(7,0)--(7.7,.7) (7,0)--(7.7,-.7)};
\draw[dashed]{ (2.5,0)--(5.5,0)};
\foreach \x in {2,6,7}
\draw[fill=white]{(\x,0) circle(3pt)};
\draw[gray!50][fill=white]{(1,0) circle(3pt)};
\draw[fill=white]{(7.7,.7) circle(3pt)};
\draw[fill=white]{(7.7,-.7) circle(3pt)};}

\newcommand\Eseifduea
{\foreach \x in {0, 1, 2}
\draw{(\x,0)--(\x+1,0)};
\draw[gray!50]{(3,0)--(4,0)};
\draw{(2,0)--(2,-1)};
\foreach \x in {0, 1, 2, 3}
\draw[fill=white]{(\x,0) circle(3pt)};
\draw[gray!50][fill=white]{(4,0) circle(3pt)};
\draw[fill=white]{(2,-1) circle(3pt)};}

\newcommand\Eseifdueb
{\foreach \x in {0, 1, 2,3}
\draw{(\x,0)--(\x+1,0)};
\draw[gray!50]{(2,0)--(2,-1)};
\foreach \x in {0, 1, 2,3,4}
\draw[fill=white]{(\x,0) circle(3pt)};
\draw[gray!50][fill=white]{(2,-1) circle(3pt)};}

\newcommand\Esettefduea
{\foreach \x in {1, 2,3,4,5}
\draw{(\x,0)--(\x+1,0)};
\draw[gray!50]{(3,0)--(3,-1)};
\foreach \x in {1, 2,3,4,5,6}
\draw[fill=white]{(\x,0) circle(3pt)};
\draw[gray!50][fill=white]{(3,-1) circle(3pt)};}

\newcommand\Esettefdueb
{\foreach \x in {2,3,4, 5}
\draw{(\x,0)--(\x+1,0)};
\draw[gray!50]{(1,0)--(2,0)};
\draw{(3,0)--(3,-1)};
\foreach \x in {2,3,4,5,6}
\draw[fill=white]{(\x,0) circle(3pt)};
\draw[gray!50][fill=white]{(1,0) circle(3pt)};
\draw[fill=white]{(3,-1) circle(3pt)};}

\newcommand\Eottofduea
{\foreach \x in {0,1,2,3,4,5}
\draw{(\x,0)--(\x+1,0)};
\draw[gray!50]{(2,0)--(2,-1)};
\foreach \x in {0,1,2,3,4,5,6}
\draw[fill=white]{(\x,0) circle(3pt)};
\draw[gray!50][fill=white]{(2,-1) circle(3pt)};}

\newcommand\Eottofdueb
{\foreach \x in {0,1,2,3,4}
\draw{(\x,0)--(\x+1,0)};
\draw[gray!50]{(5,0)--(6,0)};
\draw{(2,0)--(2,-1)};
\foreach \x in {0,1,2,3,4,5,6}
\draw[fill=white]{(\x,0) circle(3pt)};
\draw[fill=white]{(2,-1) circle(3pt)};
\draw[gray!50][fill=white]{(6,0) circle(3pt)};}

\newcommand\Fquattrofdue
{\draw{(1,0)--(2,0)};
\draw[gray!50]{(3,0)--(4,0)};
\draw{(2,1.5pt)--(3,1.5pt)};
\draw{(2,-1.5pt)--(3,-1.5pt)};
\draw{(2.4,.15)--(2.55,0)--(2.4,-.15)};
\foreach \x in {1,2,3}
\draw[fill=white]{(\x,0) circle(3pt)};
\draw[gray!50][fill=white]{(4,0) circle(3pt)};}

\newcommand\Gduefdue
{\foreach\y in{0,2,-2}
\draw[gray!50]{(0,\y pt)--(1,\y pt)};
\draw[gray!50]{(.6,.15)--(.45,0)--(.6,-.15)};
\draw[fill=white]{(1,0) circle(3pt)};
\draw[gray!50][fill=white]{(0,0) circle(3pt)};}

\begin{table}[ht]
\caption{Parabolic subsystems corresponding to $\mc{H}_{\Phi}$ and their orbits. 
The black parts of the diagrams are the Dynkin diagrams of the $\ov{\Phi(V_F)}$. The gray nodes correspond to some $\al_i$ such that  $\Phi(\Pi\setminus \{\al_i\})$ belongs to the $W$-orbit of $\ov{\Phi(V_F)}$, so that $\gen\,\ov{\Phi(V_F)}$ belongs to the orbit of $[\omega_i^\perp]$.}
\label{Tarrangement}\centering

\begin{tabular}{l| l l}
\\
{$A_n$}
&
\parbox[c]{5cm}{
\begin{tikzpicture}[scale=.6]
\Anfdue
\draw (0.7, .3)--(1.3, .3)--(1.3, -.3)--(.7,-.3)-- cycle;
\node[below] at (4.5,-.2) {$[\omega_1^\perp]=[\omega_n^\perp]$}; 
\end{tikzpicture}
}
&
\\ \hline\\
$B_n$
&
\parbox[c]{5cm}{
\begin{tikzpicture}[scale=.6]
\Bnfduea
\draw (7.7, .3)--(8.3, .3)--(8.3, -.3)--(7.7,-.3)-- cycle;
\node[below] at (4.5,-.2) {$[\omega_n^\perp]$};
\end{tikzpicture}
}
&
\parbox[c]{5cm}{
\begin{tikzpicture}[scale=.6]
\Bnfdueb
\draw (0.7, .3)--(1.3, .3)--(1.3, -.3)--(.7,-.3)-- cycle;
\node[below] at (4.5,-.2) {$[\omega_1^\perp]$\quad ${n\geq 4}$}; 
\end{tikzpicture}
}
\\ \hline\\
{$C_n$}
&
\parbox[c]{5cm}{
\begin{tikzpicture}[scale=.6]
\Cnfdue
\draw (0.7, .3)--(1.3, .3)--(1.3, -.3)--(.7,-.3)-- cycle;
\node[below] at (4.5,-.2) {$[\omega_1^\perp]$}; 
\end{tikzpicture}
}
& 
\\ \hline\\
{$D_n$}
&
\parbox[c]{5cm}{
\begin{tikzpicture}[scale=.6]
\Dnfduea
\draw (7.4, 1)--(8, 1)--(8, .4)--(7.4,.4)-- cycle;
\node[below right] at (2,-.2) {$[\omega_{n-1}^\perp]$, $[\omega_n^\perp]$}; 
\node[below right] at (2,-1.4) {
$[\omega_n^\perp]=[\omega_{n-1}^\perp]\text{ if and only if $n$ is odd}$};
\end{tikzpicture}
}
&
\parbox[c]{5cm}{
\begin{tikzpicture}[scale=.6]
\Dnfdueb
\draw (0.7, .3)--(1.3, .3)--(1.3, -.3)--(.7,-.3)-- cycle;
\node[below] at (4.5,-.2) {$[\omega_1^\perp]$}; 
\node[below] at (4.5,-1.4) {\phantom{$[\omega_1^\perp]$}}; 
\end{tikzpicture}
}
\\ \hline\\
{$E_6$}
&
\parbox[c]{5cm}{
\begin{tikzpicture}[scale=.6]
\Eseifduea
\draw (3.7, 0.3)--(4.3, 0.3)--(4.3, -0.3)--(3.7, -0.3)-- cycle;
\node[below] at (2,-1.2){ $[\omega_1^\perp]=[\omega_6^\perp]$};
\end{tikzpicture}
}
&
\parbox[c]{5cm}{
\begin{tikzpicture}[scale=.6]
\Eseifdueb
\draw (1.7, -.7)--(2.3, -.7)--(2.3, -1.3)--(1.7,-1.3)-- cycle;
\node[below] at (2,-1.2){ $[\omega_2^\perp]$};
\end{tikzpicture}
}
\\
\hline\\
{$E_7$}
&
\parbox[c]{5cm}{
\begin{tikzpicture}[scale=.6]
\Esettefduea
\draw (2.7, -.7)--(3.3, -.7)--(3.3, -1.3)--(2.7,-1.3)-- cycle;
\node[below] at (3.5,-1.2){ $[\omega_2^\perp]$};
\end{tikzpicture}
}
&
\parbox[c]{5cm}{
\begin{tikzpicture}[scale=.6]
\Esettefdueb
\draw (0.7, .3)--(1.3, .3)--(1.3, -.3)--(.7,-.3)-- cycle;
\node[below] at (3.5,-1.2){ $[\omega_1^\perp]$};
\end{tikzpicture}
} 
\\ \hline\\
{$E_8$}
&
\parbox[c]{5cm}{
\begin{tikzpicture}[scale=.6]
\Eottofduea
\draw (1.7, -.7)--(2.3, -.7)--(2.3, -1.3)--(1.7,-1.3)-- cycle;
\node[below] at (3,-1.2){ $[\omega_2^\perp]$};
\end{tikzpicture}
}
&
\parbox[c]{5cm}{
\begin{tikzpicture}[scale=.6]
\Eottofdueb
\draw (5.7, .3)--(6.3, .3)--(6.3, -.3)--(5.7,-.3)-- cycle;
\node[below] at (3,-1.2){ $[\omega_8^\perp]$};
\end{tikzpicture}
} 
\\ \hline\\
{$F_4$}
&
\parbox[c]{5cm}{
\begin{tikzpicture}[scale=.6]
\Fquattrofdue
\draw (3.7, .3)--(4.3, .3)--(4.3, -.3)--(3.7,-.3)-- cycle;
\node[below] at (2.5,-.2){ $[\omega_4^\perp]$};
\end{tikzpicture}
}
\\ \hline\\
{$G_{2}$}
&
\parbox[c]{5cm}{
\begin{tikzpicture}[scale=.6]
\Gduefdue
\draw (-.3, .3)--(.3, .3)--(.3, -.3)--(-.3,-.3)-- cycle;
\node[below] at (.5,-.2){ $[\omega_1^\perp]$};
\end{tikzpicture}
}
\\
\end{tabular}
\end{table}

\section{The facets of  $\mc P_{\Phi}$ and the regions of 
$\mc H_{\Phi}$}
\label{ac-si}
As already noted in Proposition \ref{coni=}, for all irreducible root systems
$\Phi$, the cones on the maximal faces of $\mc P_{\Phi}$ are unions of
regions of the hyperplane arrangement $\mc H_{\Phi}$. In this section, we show that, 
for types $A$ and
$C$, actually the cones on the maximal
faces of
$\mc P_{\Phi}$ are precisely the regions of the hyperplane
arrangement
$\mc H_{\Phi}$. On the contrary, for types $B$ and $D$, this is not the
case and
there are hyperplanes of $\mc H_{\Phi}$ intersecting the interior of some
facets of $\mc P_{\Phi}$. 
\par
Moreover, we recall the Young diagram formalism for representing the positive root poset of types $A$ and $C$ and give many examples.  

\subsection{Type $A$} 
Let $\Phi$ be of type $A_n$ and omit
the
subscript $\Phi$ everywhere. Recall that, in our convention, 
the positive roots are all the sums $\sum\limits_{k\in [i,j]}\al_k$,  with
$1\leq i\leq j\leq n$, where $\al_1, \dots, \al_n$ are the simple roots.
To simplify notation, we set $\al_{i,j}:= \sum_{h=i}^{j}
\al_h$, for all $1 \leq i \leq j \leq n$.

As noted in Table \ref{Tarrangement}, 
the arrangement $\mc H$ is given by the orbit $[\breve{\omega}_1^{\perp}]$
of the hyperplane orthogonal to the first fundamental coweight 
$\breve{\omega}_1$ (which coincides with the orbit $[\breve{\omega}_n^{\perp}]$ of the hyperplane orthogonal to the 
$n$-th fundamental coweight $\breve{\omega}_n$).
Since the stabilizer of
$\breve{\omega}_{1}$ in the Weyl group $W$ (contragredient representation)
is the parabolic subgroup 
$W\la \Pi \setminus \{ \al_1 \} \ra$,
the orbit of $\breve{\omega}_1$ is
obtained 
by acting with the set of the minimal coset representatives
$W^{1}:= \{s_i \cdots s_1 \mid i = 0,
\ldots n\}$ (for simplicity, we write $s_k$ instead of $s_{\al_k}$, for $k\in
[n]$). For
$i \in [1, n+1]$, let  
$\breve{\omega}_{1,i}:= s_{i-1}\cdots s_1 \left(\breve{\omega}_{1}\right)$. 
We
have that 
$$
\breve{\omega}_{1,i} = \left\{ \begin{array}{ll} 
\breve{\omega}_{1}, & \textrm{if $i = 1$,} \\
\breve{\omega}_{i} - \breve{\omega}_{i-1}, & \textrm{if $i \notin \{ 1,n+1\}$,}
\\                  
- \breve{\omega}_{n}, & \textrm{if $i = n+1$.} 
                                  \end{array} \right.
$$

The hyperplanes of $\mc H$ are exactly the hyperplanes
generated by the
sets of 
roots $(\Pi\cup \{-\theta\})\setminus \{\al_i, \al_{i+1}\}$, 
for $i=0, \dots, n$, where we intend 
$\al_0=\al_{n+1}=-\theta$ (by Theorem \ref{CM}, we already knew that 
$\mc H$ should contain such hyperplanes, for $i=1, \ldots, n-1$). 
Note that, in the usual coordinate presentation of the root system of type $A_n$ in the vector space 
$V= \{\sum \varepsilon_i=0\} \subseteq \mathbb R^{n+1}$ (see, for example, \cite{Bou}), the hyperplanes of 
$\mc H$ are the intersections of the coordinate hyperplanes of $\mathbb R^{n+1}$ with $V$. Hence, since $W$ acts on $V$ as the group of permutations of $\{\varepsilon_1, \dots, \varepsilon_{n+1}\}$,  $W$ acts on $\mc H$ as its group of permutations. 
\par
The one-dimensional subspaces of $\mc L(\mc H)$ are exactly the spaces
$\real\al$,
for all $\al \in \Phi$. 
In fact,
let $\mc{I}_k=\bigcap_{i=1}^k\breve{\omega}_{1, i}^\perp$ and notice that, for
$k=1,\dots, n$, 
$\mc I_k$  contains 
the simple roots $\al_j$ for $j>k$ and does not contain the simple roots
$\al_j$ for 
$j\leq k$; in particular $\mc I_1 \supsetneq \cdots \supsetneq \mc
I_n$. 
This implies 
that $\breve{\omega}_{1, 1}, \dots, \breve{\omega}_{1, n}$ are linearly
independent, and 
that $\mc I_k=\gen \,(\al_j\mid j>k)$. 
In particular $\mc I_{n-1}=\real \al_n$ whence, since $\mc H$ is
$W$-stable, $\real\al$ 
belongs to $\mc L(\mc H)$ for all $\al \in \Phi$ and, since $W$ is
$(n-1)$--fold 
transitive on $\mc H$, these are exactly the one-dimensional subspaces of
$\mc L(\mc H)$.

Consider  the two open halfspaces determined by $\breve{\omega}_{1, i}^\perp$:
\begin{itemize}
 \item[] $\breve{\omega}_{1, i}^+=\{x\in V\mid (x, \breve{\omega}_{1, i})> 
0\}
$,  
\item[] $\breve{\omega}_{1, i}^-=\{x\in V\mid ( x, \breve{\omega}_{1, i} )< 
0\}
.$
\end{itemize}
and the regions of $\mc H$ 
 
$$
\bigcap_{j=1}^{n+1}\breve{\omega}_{1, j}^{\sigma_j},
$$
where $\sigma_j\in \{+,-\}$. The regions are not empty, except exactly those two
with 
the $\sigma_j$ either all equal to $+$, or all equal to $-$. In fact, these two
sets are clearly empty.
On the other hand, by Proposition \ref{coni=}, the number of regions of 
$\mc H$ is greater than or equal to 
the number of facets of the root polytopes which, 
by Theorem \ref{CM}, is equal to 
$$
\sum_{i=1}^{n} [W: W\la \Pi \setminus \al_i \ra] = \sum _{i=1}^{n}
\frac{(n+1)!}{(i)!\,  (n+1-i)!} = 2^{n+1}-2.
$$
Hence the number of regions of 
$\mc H$ and the number of facets of $\mc P$ are equal to $2^{n+1}-2$.
Thus we have proved the following theorem.
\begin{thm}
\label{coincidono}
 If the root system $\Phi$ is of type $A$, the closures of the regions of the
hyperplane
arrangement $\mc H_{\Phi}$ coincide with the cones on the facets of the
root polytope $\mc P_{\Phi}$.
\end{thm}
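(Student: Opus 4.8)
The plan is to reduce the theorem to a cardinality coincidence. By Proposition \ref{coni=}, the cone on each facet of $\mc P_\Phi$ is a union of closures of regions of $\mc H_\Phi$, so it suffices to show that the number of regions of $\mc H_\Phi$ equals the number of facets of $\mc P_\Phi$; a short grouping argument then forces each facet cone to be the closure of a single region. So all the work lies in the two counts plus a little point-set bookkeeping.

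First I would record the explicit description of $\mc H=\mc H_\Phi$ obtained in Section \ref{a hyp} (see Table \ref{Tarrangement}): in type $A_n$, $\mc H$ is the single $W$-orbit $[\breve\omega_1^\perp]$, and applying the minimal coset representatives $s_{i-1}\cdots s_1$ yields the $n+1$ normals $\breve\omega_{1,1},\dots,\breve\omega_{1,n+1}$ with $\breve\omega_{1,1}=\breve\omega_1$, $\breve\omega_{1,i}=\breve\omega_i-\breve\omega_{i-1}$ for $1<i\le n$, and $\breve\omega_{1,n+1}=-\breve\omega_n$. Telescoping gives the single relation $\breve\omega_{1,1}+\cdots+\breve\omega_{1,n+1}=\un 0$, while $\breve\omega_{1,1},\dots,\breve\omega_{1,n}$ are linearly independent by the nested chain $\mc I_1\supsetneq\cdots\supsetneq\mc I_n$ recorded above; hence any $n$ of the $\breve\omega_{1,j}$ are independent. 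Consequently the regions of $\mc H$ are exactly the nonempty sign cells $\bigcap_{j=1}^{n+1}\breve\omega_{1,j}^{\sigma_j}$ with $\sigma\in\{+,-\}^{n+1}$, and such a cell is empty precisely when $\sigma$ is constant (for nonconstant $\sigma$ one exhibits a point directly, balancing the positive and negative coordinates so that their sum is $\un 0$). So $\mc H$ has exactly $2^{n+1}-2$ regions.

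Next I would count the facets of $\mc P_\Phi$ using Theorem \ref{CM}: the standard parabolic facets are the $F_{\{i\}}$, $i\in[n]$, each of which is genuinely a facet in type $A_n$ (visible from Table \ref{Tn-2faces}), they form a complete set of representatives of the $W$-orbits of facets, and $\stab F_{\{i\}}=W\la\Pi\setminus\{\al_i\}\ra$. Therefore the number of facets is $\sum_{i=1}^n[W:W\la\Pi\setminus\{\al_i\}\ra]=\sum_{i=1}^n\binom{n+1}{i}=2^{n+1}-2$, matching the region count.

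Finally, both the family of region-closures $\{\,\ov R\mid R\text{ a region of }\mc H\,\}$ and the family of facet cones $\{\,\cone(F)\mid F\text{ a facet of }\mc P_\Phi\,\}$ consist of full-dimensional closed cones with pairwise disjoint interiors whose union is $\gen\,\Phi$ (the latter because $\un 0$ lies in the interior of $\mc P_\Phi$); moreover, by Proposition \ref{coni=}, each $\cone(F)$ is a union of region-closures, and since $\partial\,\cone(F)\subseteq\bigcup_{H\in\mc H}H$, every region-closure lies in exactly one $\cone(F)$. Thus the facet-cone family is obtained from the region-closure family by grouping, and as both families have the same cardinality $2^{n+1}-2$ the grouping is trivial: each $\cone(F)$ is a single region-closure. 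This is the assertion of the theorem. I expect no real obstacle here, since all the substantive content — the exact shape of $\mc H_\Phi$ and the $f$-polynomial of $\mc P_\Phi$ — has already been established; the only elementary point to verify is that precisely the two constant sign vectors give empty cells, which is immediate from $\sum_j\breve\omega_{1,j}=\un 0$ together with the independence of any $n$ of the $\breve\omega_{1,j}$.
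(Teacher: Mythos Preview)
Your proposal is correct and follows essentially the same approach as the paper: count regions of $\mc H_\Phi$, count facets of $\mc P_\Phi$, observe both equal $2^{n+1}-2$, and conclude via Proposition \ref{coni=}. The only minor variation is that you verify the non-constant sign cells are nonempty directly (via the relation $\sum_j\breve\omega_{1,j}=\un 0$ and independence of any $n$ normals), whereas the paper deduces this indirectly from the inequality (number of regions) $\geq$ (number of facets) $=2^{n+1}-2$ together with the emptiness of the two constant cells; your final grouping argument simply makes explicit what the paper leaves implicit. One small quibble: your reference to Table \ref{Tn-2faces} for the claim that each $F_{\{i\}}$ is a facet is misplaced (that table lists codimension-$2$ faces); the correct justification is Theorem \ref{CM}(5), since removing any single node from the extended $A_n$ diagram leaves it connected.
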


If we set, for
all $i\in
[n]$,
$M_i=\left\{\al\in \Phi^+\mid \left( \al, \breve\omega_{i} 
\right)>0\right\}$ 
(see Section \ref{idealiabeliani} for the role of the sets $M_i$, $i \in [n]$,
in the theory of abelian ideals)
and 
$$
R_i=\bigcap_{j=1}^{n+1}\breve{\omega}_{1, j}^{\sigma_j}
$$
with $\sigma_j=+$ for  $1\leq j\leq i$  and  $\sigma_j=-$  otherwise,
then
$$
M_i\subseteq\overline{R_i}
$$
and the standard parabolic facet $F_i$, which is the convex hull of 
$M_i$, is equal to $\overline{R_i} \cap \{x \mid ( x,
\breve{\omega}_{i}) = 1\}$. Since  $\gen \, M_i= \gen \, \Phi$, this
implies that 
$\gen \, R_i = \gen \, \Phi$.
Now the set of regions $R_i$, for all $i\in [n]$, under the action of $W$ covers
all 
the regions $\bigcap_{j=1}^{n+1}\breve{\omega}_{1, j}^{\sigma_j}$ except the two
empty ones.

\par

We have the following consequence of Theorem \ref{coincidono}.

\begin{cor}
\label{biez}
Let $\Phi$ be of type $A_n$. The faces of $\mc P$ are in natural bijection with the nontrivial faces 
of the hyperplane arrangement $\mc H$. Under this bijection, the
$k$-dimensional faces 
of $\mc P$ correspond to the $(k+1)$-dimensional faces of $\mc H$, for
all $k=0, \ldots, n-1$. 
\end{cor}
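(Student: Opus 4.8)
The plan is to construct the bijection explicitly using the combinatorial structure already established in this subsection, namely the identification of $W$ acting on $\mc H$ with the symmetric group $S_{n+1}$ acting on the coordinate hyperplanes $\{\varepsilon_h = 0\} \cap V$. A nonempty face of $\mc H$ has the form $F = \overline{R} \cap x$ where $R$ is a region and $x \in \mc L(\mc H)$. By the description above, a region of $\mc H$ is a sign vector $\bigcap_{j=1}^{n+1}\breve{\omega}_{1,j}^{\sigma_j}$ with not all $\sigma_j$ equal; equivalently, choosing a proper nonempty subset $S \subsetneq [n+1]$ (the positions where $\sigma_j = +$), and the elements of $\mc L(\mc H)$ are the subspaces $\gen\,(\varepsilon_h - \varepsilon_k \mid h,k \text{ in the same block})$ for an ordered set partition of $[n+1]$. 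So a face of $\mc H$ is encoded by an ordered set partition of $[n+1]$ into at least two blocks (the order records which way the sign vector of the ambient region sits), while by Theorem \ref{CM} together with the $A_n$-analysis, the faces of $\mc P$ are the $W$-translates of the standard parabolic faces $F_I$, and these are also naturally indexed by ordered set partitions of $[n+1]$ — this is precisely the combinatorial model of \cite{ABHPS} that the introduction refers to. The map is then: to a face $F$ of $\mc P$, associate the unique minimal face $G$ of $\mc H$ whose relative interior meets the cone $\mathrm{Cone}^+(F)$; equivalently, $\gen\,G = \gen\,F$ and the sign pattern of $G$ is forced by which open halfspaces $\breve\omega_{1,j}^{\pm}$ the cone on $F$ lies in.

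The key steps I would carry out: (1) Recall from Theorem \ref{coincidono} that the closed cones on facets of $\mc P$ are exactly the closed regions of $\mc H$; this gives the bijection in top dimension, $k = n-1 \leftrightarrow$ dimension $n$. (2) Every face $F$ of $\mc P$ of dimension $k$ is an intersection of facets; dually, $\mathrm{Cone}^+(F)$ is an intersection of closed cones on facets, hence — using step (1) — an intersection of closed regions of $\mc H$. Its relative interior lies in a single element $x \in \mc L(\mc H)$, and in fact $x = \gen\,F$ by the discussion above (the one-dimensional elements of $\mc L(\mc H)$ are exactly the $\real\al$, and more generally every $\gen\,F$ is a $W$-translate of a $\gen\,(\Pi\cup\{-\theta\}\setminus\{\al_i,\al_{i+1}\})$, which is in $\mc L(\mc H)$). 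Then $\overline{\mathrm{Cone}^+(F)} \cap \gen\,F$ is a face of $\mc H$ of dimension $k+1$, and this is the map. (3) Injectivity: distinct faces of $\mc P$ sit in distinct cones, and a face of $\mc H$ determines $\mathrm{Cone}^+(F)$ as the union of the closed top-cones (regions) containing it — so $F$ is recovered as $\mathrm{Cone}^+(F) \cap \{x : (x,\xi)=1\}$ for the appropriate supporting functional. (4) Surjectivity: given a face $G$ of $\mc H$ of dimension $k+1 \le n$, it is $\overline R \cap x$ for a region $R$ and $x \in \mc L(\mc H)$; the union of all closed regions containing $G$ is a polyhedral cone $C$ with $\gen\,C = x$, $C$ is a union of cones on facets of $\mc P$ by Theorem \ref{coincidono}, and one checks $C \cap \mc P$ meets the boundary $\partial\mc P$ in a single face of dimension $\dim x - 1 = k$. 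A clean way to organize (3) and (4) simultaneously is to pass through the ordered-set-partition model and exhibit both sides as that set, with the dimension of $F_I$ being $n - |\ov I|$ from Theorem \ref{CM} matching the number of blocks minus one.

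The main obstacle I expect is step (4), verifying that the union of closed regions of $\mc H$ sharing a common face $G$ is exactly a cone on a single face of $\mc P$, rather than a cone that crosses into the interior of $\mc P$ or is cut by facet-hyperplanes in an unexpected way. The worry is a mismatch in codimension 2: faces of $\mc P$ of codimension $\ge 2$ need not be intersections of exactly the "expected" number of facets, and $\mc H$ contains hyperplanes that (a priori) could slice faces. Here the special feature of type $A$ saves us — by Theorem \ref{coincidono} no hyperplane of $\mc H$ meets the interior of a facet, so inductively no hyperplane meets the relative interior of any face of $\partial\mc P$; hence each face of $\partial\mc P$ is a union of relatively-open faces of $\mc H$, and a dimension/cardinality count (both the $k$-faces of $\mc P$ and the $(k{+}1)$-faces of $\mc H$ are counted by $W$-orbit sizes summed over the combinatorial index set $\mc F$, via the $f$-polynomial formula in Theorem \ref{CM}) forces the correspondence to be one-to-one. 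I would close the argument with this counting step, since it sidesteps a delicate direct geometric verification: once the map is shown to be well-defined and injective, equality of the two face-count generating functions upgrades it to a bijection preserving dimension with the stated shift.
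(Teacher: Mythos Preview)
Your proposal is correct and rests on the same map as the paper --- namely, sending a face $F$ of $\mc P$ to the cone on $F$ --- but you wrap it in far more machinery than needed. The paper's argument is essentially one line once Theorem~\ref{coincidono} is in hand: since coning commutes with finite intersections, and since by Theorem~\ref{coincidono} the cones on the facets of $\mc P$ are exactly the closed regions of $\mc H$, the map $F \mapsto \mathrm{Cone}(F)$ carries intersections of facets (i.e.\ faces of $\mc P$) to intersections of closed regions (i.e.\ nontrivial faces of $\mc H$), bijectively and with the stated dimension shift. Surjectivity in particular falls out immediately: any nontrivial face of $\mc H$ is an intersection $\bigcap_j \overline{R_j}$ of closed regions, which equals $\bigcap_j \mathrm{Cone}(G_j) = \mathrm{Cone}\bigl(\bigcap_j G_j\bigr)$ for the corresponding facets $G_j$ of $\mc P$.

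Your ordered-set-partition model, the separate verification that $\gen\,F \in \mc L(\mc H)$, and the counting argument you propose to close step~(4) are all valid but unnecessary detours. In particular, the obstacle you anticipate in step~(4) --- that the union of closed regions sharing a face might not be a cone on a single face of $\mc P$ --- dissolves once you recognize that the intersection (not the union) of closed regions is what matters, and that this intersection is automatically a cone on a face of $\mc P$ by the commutation of coning with intersection.
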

\begin{proof}
By Theorem \ref{coincidono}, the bijection between the facets of $\mc P$ and the regions of $\mc H$ 
 is given by coning: for all $i \in [n]$, it maps
$F_i$ to $R_i$, which is the cone on $F_i$. Indeed, this map induces the required
bijection since, for all $K \subseteq [n]$, the intersection of the cones on
$F_k$, for $k \in K$, is equal to the cone on the set $\bigcap_{k\in K} F_k$. 
In fact, this implies that the map sending $\bigcap_{k \in K} F_k $ to its cone is
injective and surjective.
\end{proof}

We now give some enumerative results.
\begin{pro}
\label{carat-face}
Let $\Phi$ be of type $A_n$. Then
\begin{enumerate}
\item the face polynomial of $\mc P$ is 
 \label{faccia}
$$
f_\mc P(x)=\sum\limits_{i=0}^{n-1}\binom{n+1}{i+2}(2^{i+2}-2) x^{i},
$$ 
\item 
\label{face}
the face polynomial of $\mc H$ is 
$$
f_\mc H(x)=1+\sum\limits_{i=1}^{n}\binom{n+1}{i+1}(2^{i+1}-2) x^{i},
$$
\item 
\label{carat}
the characteristic polynomial of $\mc H$ is 
$$
\chi_\mc
H(t)= (-1)^n n+\sum\limits_{k=1}^n\binom{n+1}{k+1}(-1)^{n-k}t^k= (-1)^n\sum\limits_{i=1}^n(1-t)^i.
$$
\end{enumerate}
\end{pro}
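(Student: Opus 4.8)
The plan is to handle the three parts in the order stated, deriving (2) from (1) through Corollary~\ref{biez} and computing (3) from the intersection poset $\mc L(\mc H)$. For part (1) I would first pin down the whole face lattice of $\mc P$ in the usual coordinates on $V=\{\sum\varepsilon_i=0\}\subseteq\real^{n+1}$. By Theorem~\ref{CM} the facet $F_i$ is the convex hull of $M_i=\{\varepsilon_a-\varepsilon_b\mid a\le i<b\}$, i.e.\ of the pair $(S,T)=([1,i],[i+1,n+1])$; acting by $W=S_{n+1}$ one gets that the facets are exactly the sets $\conv\{\varepsilon_a-\varepsilon_b\mid a\in S,\ b\in T\}$ over ordered partitions $[n+1]=S\sqcup T$ with $S,T\neq\emptyset$ (there are $2^{n+1}-2$ of them, as already noted in the text). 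Since the root $\varepsilon_a-\varepsilon_b$ lies on the facet $(S,T)$ precisely when $a\in S$, $b\in T$, the intersection of the facets $(S_1,T_1)$ and $(S_2,T_2)$ is the face $(S_1\cap S_2,\ T_1\cap T_2)$ whenever both parts are nonempty, and is empty otherwise; conversely every pair $(S,T)$ of disjoint nonempty subsets of $[n+1]$ is realized as such an intersection. Hence the nonempty proper faces of $\mc P$ are exactly the $\conv\{\varepsilon_a-\varepsilon_b\mid a\in S,\ b\in T\}$ with $S,T$ disjoint and nonempty, and a short linear-algebra argument shows such a face has dimension $|S|+|T|-2$. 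Counting: a face of dimension $i$ corresponds to a pair with $|S|+|T|=i+2$, obtained by choosing the $i+2$ underlying elements ($\binom{n+1}{i+2}$ ways) and $2$-colouring them using both colours ($2^{i+2}-2$ ways); summing $x^i$ gives the stated $f_\mc P(x)$.

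For part (2), Corollary~\ref{biez} gives a dimension-shifting bijection between the $k$-faces of $\mc P$ and the $(k+1)$-faces of $\mc H$ for $k=0,\dots,n-1$; the only face of $\mc H$ not accounted for is $\{\un 0\}=\bigcap_{H\in\mc H}H\in\mc L(\mc H)$, which is a $0$-dimensional face because $\mc H$ is central, so $\un 0$ lies in the closure of every region. Therefore $f_\mc H(x)=1+x\,f_\mc P(x)$, and inserting the formula from part (1) and re-indexing ($i\mapsto i+1$) yields the claimed $f_\mc H(x)$.

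For part (3) I would compute $\chi_\mc H$ directly from $\mc L(\mc H)$, using that $\mc H$ consists of the hyperplanes $H_i=\{x\in V\mid x_i=0\}$, $i\in[n+1]$. Writing $V_A=\bigcap_{i\in A}H_i$, one checks that $\dim V_A=n-|A|$ when $|A|\le n-1$, that $V_A=\{\un 0\}$ when $|A|\ge n$, and that distinct $A$ with $|A|<n$ give distinct subspaces; so $\mc L(\mc H)=\{V_A\mid |A|<n\}\cup\{\{\un 0\}\}$, and the interval below each $V_A$ with $|A|<n$ is the Boolean lattice $2^A$, giving $\mu(V_A)=(-1)^{|A|}$. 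Then $\mu(\{\un 0\})=-\sum_{|A|<n}(-1)^{|A|}=-\sum_{j=0}^{n-1}(-1)^j\binom{n+1}{j}=(-1)^n n$ by a one-line binomial computation. Collecting terms, $\chi_\mc H(t)=t^n+\sum_{j=1}^{n-1}(-1)^j\binom{n+1}{j}t^{n-j}+(-1)^n n$; the substitution $k=n-j$ with $\binom{n+1}{n-k}=\binom{n+1}{k+1}$ gives the first displayed form, while the identity with $(-1)^n\sum_{i=1}^n(1-t)^i$ follows from the hockey-stick identity $\sum_{i=k}^n\binom{i}{k}=\binom{n+1}{k+1}$ applied to each coefficient (equivalently, from $\sum_{i=1}^n(1-t)^i=\frac{(1-t)-(1-t)^{n+1}}{t}$). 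As a cross-check, Zaslavsky's theorem (Theorem~\ref{zas}) gives $(-1)^n\chi_\mc H(-1)=\sum_{i=1}^n 2^i=2^{n+1}-2$, the number of regions, in agreement with Theorem~\ref{coincidono}. Alternatively, since $\mc H$ is the restriction to $V$ of the coordinate-hyperplane arrangement on $\real^{n+1}$, the finite-field method evaluates $\chi_\mc H(q)$ as the number of $x\in(\mathbb F_q^\times)^{n+1}$ with $\sum x_i=0$, namely $\tfrac{(q-1)^{n+1}+(-1)^{n+1}(q-1)}{q}$, which equals the same polynomial.

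The main obstacle is the bookkeeping in part (1): reading off the full face lattice of $\mc P_{A_n}$ from Theorem~\ref{CM}, and in particular verifying that every pair $(S,T)$ of disjoint nonempty subsets of $[n+1]$ — not only the partitions — occurs as a face, and that its dimension is $|S|+|T|-2$. Once this is settled, parts (2) and (3) are short, and the rest reduces to standard binomial identities.
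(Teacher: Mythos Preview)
Your argument is correct and, for parts (2) and (3), follows exactly the paper's route: the shift identity $f_{\mc H}(x)=1+x\,f_{\mc P}(x)$ via Corollary~\ref{biez}, and a direct M\"obius computation on $\mc L(\mc H)$ using that the $n+1$ hyperplanes are in general position (with $\mu(\{\un 0\})=(-1)^n n$). For part (1) the paper merely invokes Theorem~\ref{CM} and leaves the computation to the reader; your explicit coordinate description of the face lattice by ordered pairs $(S,T)$ of disjoint nonempty subsets of $[n+1]$ is a concrete way of carrying this out and leads to the same count, while your finite-field check for $\chi_{\mc H}$ is a pleasant extra not in the paper.
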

\begin{proof}
The first equality can be obtained using Theorem \ref{CM} and the computation is left to the reader.

The second equality follows by the first since, 
by Corollary \ref{biez}, the face polynomials of $f_\mc H(x)$ and
$f_\mc P(x)$ satisfy the relation
$f_\mc H(x)=1+xf_\mc P(x)$.

To prove formulas (3), we provide a direct computation of
the M\"{o}bius function of 
$\mc L(\mc H)$.
Since $\mc H$ has $n+1$ hyperplanes and any such $n$ hyperplanes 
are linearly independent, 
$\mc L(\mc H)$ is obtained from the boolean algebra of rank 
$n+1$ by replacing 
all the elements
of both rank 
 $n$ and $n+1$ with a single top element (of rank $n$), the null space
$\underline 0$, whose 
M\"{o}bius function is $\mu(\underline 0)=(-1)^n n$. 
Hence, if $k \geq 1$, the number of $x\in \mc L(\mc H)$ of dimension
$k$ is $\binom{n+1}{k+1}$ and, for all such $x$, 
$\mu(x)=(-1)^{\codim x}$. 
Thus we have:
\begin{equation*}
\begin{split}
\chi_\mc
H(t)&= (-1)^n n+\sum\limits_{k=1}^n\binom{n+1}{k+1}(-1)^{n-k}t^k 
\end{split}
\end{equation*}
and we get the second identity since  
$$
(-1)^n\sum\limits_{i=1}^n(1-t)^i= (-1)^n\sum\limits_{i=1}^n
\sum\limits_{k=0}^i \binom{i}{k} (-t)^k = (-1)^n\sum\limits_{k=0}^n
\sum\limits_{i=\max(k,1)}^n \binom{i}{k} (-t)^k
$$
and $\sum\limits_{i=\max(k,1)}^n \binom{i}{k}= \left\{ \begin{array}{ll}
\binom{n+1}{k+1},  & \textrm{if $k \neq 0$,}\\
n, & \textrm{if $k = 0$.} 
\end{array}  \right.$
%
%
%
%
\end{proof}

By Theorem \ref{zas} and Proposition \ref{carat-face}, (\ref{carat}),  we
re-obtain that $2^{n+1}-2$
is the number of regions of 
$\mc{H}$.
\par

Each vertex of $\mc P$ belongs to exactly $2^{n-1}$ facets of $\mc
P$. 
This is clear if we observe that each vertex $v$ belongs to exactly $n-1$
hyperplanes of $\mc H$ and, if $i$ and $i'$ 
are the two indices such that $(v , \breve\omega_{1,i})$ and 
$(v , \breve\omega_{1,i'})$ are nonzero, then these two values have different
signs. 
It follows that the $2^{n-1}$ sign choices for $\{\breve\omega_{1,j}^\pm\mid
j\ne i, i'\}$
correspond exactly to the regions of $\mc H$ whose closures contain $v$.
\bigskip

We recall the Young diagram combinatorics of the positive roots of type $A_n$, which is useful for representing the abelian ideals and hence the faces. 
The $A_n$ root diagram is a partial $n \times n$ matrix filled with the positive roots: 
the $i$-th row consists of the $n+1-i$ positions from $(i,1)$ to 
$(i,n+1-i)$ (staircase tableau) and the position $(i,j)$ is filled with the
root $\al_{j,n+1-i}$.

\begin{exa}
The following is the $A_4$
root diagram.

$$
\begin{tikzpicture}[scale=.6]
 \draw{(0,0)--(6,0)};
 \foreach \x in {0,1.5, 3, 4.5} 
 \draw{(0,-\x-1.5)--(6-\x,-\x-1.5) 
           (0,0)--(0,-6)
           (6-\x, 0)--(6-\x, -1.5-\x)};
\pgftext[base, left,x=0.45cm,y=-0.8cm]{{$ \al_{1,4}$}};
\pgftext[base, left,x=1.95cm,y=-0.8cm]{{$ \al_{2,4}$}};
\pgftext[base, left,x=3.45cm,y=-0.8cm]{{$ \al_{3,4}$}};
\pgftext[base, left,x=5.1cm,y=-0.8cm]{{$ \al_{4}$}};
\pgftext[base, left,x=0.45cm,y=-2.3cm]{{$ \al_{1,3}$}};
\pgftext[base, left,x=1.95cm,y=-2.3cm]{{$ \al_{2,3}$}};
\pgftext[base, left,x=3.55cm,y=-2.3cm]{{$ \al_{3}$}};
\pgftext[base, left,x=0.45cm,y=-3.8cm]{{$ \al_{1,2}$}};
\pgftext[base, left,x=2.05cm,y=-3.8cm]{{$ \al_{2}$}};
\pgftext[base, left,x=0.55cm,y=-5.3cm]{{$\al_{1} $}};
\end{tikzpicture}
$$ 
 \end{exa} 
The root order corresponds to the 
reverse  order of the matrix positions, 
i.e, if $\be$ fills the position $(i,j)$ and $\ga$ fills the position $(h,
k)$, then $\be\geq \ga$ if and only 
if $i\leq h$ and $j\leq k$ (i.e., if and only if $\be$ is at the north-west of
$\ga$).  
We shall identify  the $A_n$ diagrams and subdiagrams with the corresponding
sets of roots.  
\par

Recall that, for $i=1, \ldots, n$, the standard parabolic facet $F_i$ is the convex hull of $M_i$, which is the dual order ideal generated by the root $\al_i$ in the root poset of $\Phi$: $M_i$ is exactly the set of vertices of $F_i$ and form a maximal rectangle in the tableau.

The realization of the standard parabolic facet 
$F_i$, $i \in [n]$, as the intersection of a closed region 
of $\mc H$ with the affine hyperplane 
$\{(x , \breve\omega_i)=1\}$ shows that $F_i$ has at most
$n+1$ facets. For $i\in \{1,n\}$, $F_i$ has $n$ facets, since $\breve\omega_{i}^\perp \in \mc H$ if (and
only if) $i\in \{1,n\}$. Thus $F_1$ and $F_n$ are $(n-1)$-simplices. 
For $1< i<n$, $F_i$ has in turn $n+1$ facets, each  obtained by  intersecting $F_i$ with a hyperplane $\breve\omega_{1,j}^\perp$, $j\in [n+1]$. 
The facets $F_i\cap \breve\omega_{1,j}^\perp $ of $F_i$ are of two kinds, according to
whether $j\leq i$ or $j>i$. In the first case, $F_i\cap \breve\omega_{1,j}^\perp $ is the convex hull of the roots belonging to $M_i$ but not to its $j$-th column; in the  latter case, it is the convex hull of the roots belonging to $M_i$ but not to its $j$-th row. 
It is immediate to see that these are isometric, respectively,  to the facets of indices $i$
and $i-1$ of the type ${A_{n-1}}$ 
analogue of $\mc P$. Thus $\mc P$ has a recursive structure, in the
sense that its faces of dimension less 
that $n-1$ are the faces of the $A_k$ analogues of $\mc P$, for $k<n$.

\begin{exa}
$$
\begin{tikzpicture}
 \fill[gray!50](0,0) rectangle (2,-2.5);
 \draw{(0,0)--(4,0) (0,0)--(0,-4)};
 \foreach \x in {0.5, 1, 1.5, 2, 2.5, 3, 3.5, 4}
 \draw{(0,-\x)--(4.5-\x,-\x) (\x,0)--(\x,-4.5+\x)};
 \pgftext[base, left, x=1.5cm, y=-4.1cm]{$M_4$ in type   $A_8$};
\end{tikzpicture}
$$

$$
\begin{tikzpicture}
 \fill[gray!50](0,0) rectangle (1,-2.5);
 \fill[gray!50](1.5,0) rectangle (2,-2.5);
 \draw{(0,0)--(4,0) (0,0)--(0,-4)};
 \foreach \x in {0.5, 1, 1.5, 2, 2.5, 3, 3.5, 4}
 \draw{(0,-\x)--(4.5-\x,-\x) (\x,0)--(\x,-4.5+\x)};
 \fill[gray!50](5,0) rectangle (7,-0.5);
 \fill[gray!50](5,-1) rectangle (7,-2.5);
 \draw{(5,0)--(9,0) (5,0)--(5,-4)};
 \foreach \x in {0.5, 1, 1.5, 2, 2.5, 3, 3.5, 4}
 \draw{(5,-\x)--(9.5-\x,-\x) (5+\x,0)--(5+\x,-4.5+\x)};
\pgftext[base,left,x=7cm, y=-4cm] {Two facets of $F_4$ in type $A_8$}; 
\end{tikzpicture}
$$

\end{exa}

Moreover, this description of the facets of $F_i$ implies that $F_i$ is
congruent
to the product of two simplices, as already pointed out in \cite{ABHPS} for the
root polytope obtained in the usual coordinate description of $\Phi$ (see,
for example, \cite{Bou}). In particular, $F_i$ is congruent to the product a
$(i-1)$-simplex and a
$(n-i)$-simplex.

Recall from Theorem \ref{CM} that the facets of 
$\mc P$ are obtained from $F_1, \dots, F_n$ through
the action of $W$, that these orbits are disjoint, and that the number of
facets in
the orbit of $F_i$ is
$[W:\stab_W(\breve\omega_i)]=\binom{n+1}{i}$.
On the other hand, the whole automorphism group of $\Phi$ joins the orbits
$WF_i$ and $WF_{n+1-i}$, 
for $i\in [n]$, so that, under this group, $\mc P$ has
exactly $\left\lfloor\frac{n+1}{2}\right\rfloor$ 
orbits of facets, and the $i$-th orbit has cardinality $2\binom{n+1}{i}$, for
$i=1, \dots,\left\lfloor\frac{n+1}{2}\right\rfloor$. 

\begin{exa}
We briefly illustrate our results in type $A_3$, when the root polytope is the
well known \lq\lq cuboctahedron\rq\rq, that is the
intersection of a cube with its dual 
octahedron (see \cite{C}). This has fourteen facets, six of which are squares,
corresponding to the standard parabolic facet $F_2$, and eight regular 
triangles, corresponding to either the facet $F_1$ or the facet $F_3$.
In the next picture, we see a deformation of the polyhedral complex of the three standard parabolic faces, where collinear edges correspond to edges  lying on the same hyperplane of $\mc H$. The four lines of the edges correspond to the 
four hyperplanes of $\mc H$.
\end{exa}

$$
\begin{tikzpicture}
  \filldraw [] (0,0) circle (1pt)
                   (6,0) circle (1pt)
                   (3,0) circle (1pt)
                   (1,-3) circle (1pt)
                   (3.5,-1.5) circle (1pt)
		   (7/3,-1) circle (1pt);
  \draw 
  (0,0) -- (6,0)
  (1,-3) -- (6,0)
  (1,-3) -- (3,0)
  (0,0) -- (3.5,-1.5);
  \pgftext[base, right, x=-.1, y=0]{$\al_1$};
  \pgftext[base, center, x=3cm, y=.2cm]{$\al_{1,2}$};
  \pgftext[base, left, x=6.1cm, y=0cm]{$\al_{2}$};
  \pgftext[top, right, x=1cm, y=-3cm]{$\al_{3}$};
  \pgftext[top, right, x=2.1cm, y=-1cm]{$\al_{1,3}$};
  \pgftext[top, center, x=3.5cm, y=-1.7cm]{$\al_{2,3}$};
  \pgftext[top, center, x=3.5cm, y=-.5cm]{$F_2$};
  \pgftext[top, center, x=2cm, y=-.3cm]{$F_1$};
  \pgftext[top, center, x=2.5cm, y=-1.5cm]{$F_3$};
\end{tikzpicture}
$$

\subsection{Type $C$}
The description of
$\mc P_\Phi$ is much simpler  
and most of the combinatorics 
developed for type $A$ has its analogue. 
Let $\Phi$ be of type $C_n$ and omit
the
subscript $\Phi$ everywhere. 
As noted in \cite{C-M}, the root polytope of every root system  is
the convex hull of its long roots. In our convention, the long positive roots are 
$\lambda_i:= 2 (\sum_{k=i}^{n-1} \al_k) + \al_n$, for $i \in [n]$, and
form a (orthogonal) basis of $\gen \, \Phi$. Then the root
polytope $\mc P$ is a hyperoctahedron (or cross-polytope). 
Its facets correspond one to one to the octants of the cartesian system given by
the basis of the long roots.

As noted in Table \ref{Tarrangement}, 
the arrangement $\mc H$ is given by the orbit $[\breve{\omega}_1^{\perp}]$
of the hyperplane orthogonal to the 
first fundamental coweight 
$\breve{\omega}_1$.
Since the stabilizer of
$\breve{\omega}_{1}$ in the Weyl group $W$ (contragredient representation) is
the parabolic subgroup 
$W\la \Pi \setminus \{ \al_1 \} \ra$,
the orbit of $\breve{\omega}_1$ is
obtained 
by acting with the set of the minimal coset representatives
$W^{1} := \{s_i \cdots s_1 \mid i = 0,
\ldots n-1 \} \cup \{ s_i  \cdots  s_{n-1} s_n \cdots s_1 \mid i = 1,
\ldots n \}$ (for simplicity, we write $s_k$ instead of $s_{\al_k}$, for
$k\in
[n]$).  For
$i \in [1, n]$, let  
$\breve{\omega}_{1,i}:= s_{i-1}\cdots s_1 \left(\breve{\omega}_{1}\right)$. 
We
have 
$$
\breve{\omega}_{1,i} = \left\{ \begin{array}{ll} 
\breve{\omega}_{1}, & \textrm{if $i = 1$,} \\
\breve{\omega}_{i} - \breve{\omega}_{i-1}, & \textrm{if $i \notin \{ 1,n\}$,}
\\                  
2 \breve{\omega}_{n} - \breve{\omega}_{n-1} , & \textrm{if $i = n$.} 
                                  \end{array} \right.
$$

Moreover, $s_n (2 \breve{\omega}_{n} - \breve{\omega}_{n-1}) = - (2
\breve{\omega}_{n} - \breve{\omega}_{n-1})$ and hence the second $n$ functionals
we obtain are the opposites of the first $n$. Thus the hyperplanes of $\mc
H$ are the hyperplanes orthogonal to $\breve{\omega}_{1,i}$, for $i \in [n]$.
Being the $n$ hyperplanes linearly independent, the number of regions of
$\mc H$ is  $2 ^{n}$. In the usual coordinate description of $\Phi$ \cite{Bou}, $\mc H$ is the set of coordinate hyperplanes. 

Hence, Proposition \ref{coni=} implies the following theorem.
\begin{thm}
 If the root system $\Phi$ is of type $C$, the closures of the regions of the
hyperplane
arrangement $\mc H_{\Phi}$ coincide with the cones on the facets of the
root polytope $\mc P_{\Phi}$.
\end{thm}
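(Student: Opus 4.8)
The plan is to repeat, in the much simpler cross-polytope setting, the counting argument behind Theorem \ref{coincidono}. By Proposition \ref{coni=}, the cone on each facet of $\mc P$ is the closure of a union of regions of $\mc H$; and since $\un 0$ lies in the interior of $\mc P$, these cones cover $\gen\,\Phi$ and have pairwise disjoint interiors. Hence it is enough to check that $\mc P$ has exactly as many facets as $\mc H$ has regions: each cone is then forced to be the closure of a single region, which is precisely the assertion.

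First I would pin down the facets. By \cite{C-M}, $\mc P$ is the convex hull of the long roots, and in type $C_n$ the long positive roots $\lambda_1,\dots,\lambda_n$ form an orthogonal basis of $\gen\,\Phi$, so $\mc P=\conv(\{\pm\lambda_i\mid i\in[n]\})$ is a cross-polytope; its facets are the simplices $\conv(\{\varepsilon_1\lambda_1,\dots,\varepsilon_n\lambda_n\})$ for $\varepsilon\in\{+,-\}^n$, and the cone on such a facet is exactly the corresponding orthant of the coordinate system attached to $\lambda_1,\dots,\lambda_n$. In particular $\mc P$ has $2^n$ facets whose cones are the $2^n$ orthants. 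On the other side, as recorded just above, $\mc H=\{\breve\omega_{1,i}^\perp\mid i\in[n]\}$ is an arrangement of $n$ linearly independent central hyperplanes --- in the standard coordinates of \cite{Bou}, the $n$ coordinate hyperplanes --- so its intersection poset is Boolean, its characteristic polynomial is $(t-1)^n$, and by Theorem \ref{zas} it has $(-1)^n(-2)^n=2^n$ regions (equivalently, the regions are the $2^n$ cones obtained by fixing the sign of $(\,\cdot\,,\breve\omega_{1,i})$ for each $i\in[n]$). The two counts agree, so the argument of the first paragraph finishes the proof.

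Essentially no obstacle remains; the only point meriting a word --- and already settled in the discussion preceding the statement --- is that $\mc H$ really is the orthant-defining arrangement of the cross-polytope, i.e.\ that, up to relabelling and positive scalars, the functionals $\breve\omega_{1,i}$ are the dual basis of $\lambda_1,\dots,\lambda_n$. Granting this, one can even bypass the counting: the sign regions of $\mc H$ are literally the orthants, which biject with the facets of $\mc P$ by coning, and the equality of closures is immediate. This is the content-light counterpart of the type $A_n$ case, the cross-polytope structure making both the facet count and the region count transparent.
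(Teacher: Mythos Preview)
Your proposal is correct and follows essentially the same route as the paper: establish that $\mc P$ is a cross-polytope with $2^n$ facets, that $\mc H$ consists of $n$ linearly independent hyperplanes and hence has $2^n$ regions, and conclude via Proposition \ref{coni=}. The paper's own proof is the single line ``Hence, Proposition \ref{coni=} implies the following theorem'' after recording these two counts in the preceding discussion; your write-up is more explicit (invoking Zaslavsky and spelling out the cross-polytope facet description) but the argument is the same.
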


We recall the Young diagrams combinatorics for the root system of
type $C_n$. To simplify notation, we set $\al_{i,j}:= \sum_{h=i}^{j}
\al_h$, for all $1 \leq i \leq j \leq n$.
The $C_n$ root diagram is a partial $n \times 2n-1$ matrix filled with the
positive roots: 
the $i$-th row consists of the $2(n-i)+1$ positions from $(i,i)$ to 
$(i,2n-i)$; the positions $(i,j)$, with $i\leq j < n$, are filled with the
roots $\al_{j,n-1}+\al_{i,n}$,
the positions $(i,j)$, with $i\leq j = n$,
are filled with the
roots 
$\al_{i,n}$,
and the positions $(i,j)$, with 
$n+1\leq j\leq 2n-i$, are filled with the roots 
$\al_{i,2n-j}$. Note that the long positive roots are in positions $(i,i)$,
$i \in [n]$. 

\begin{exa}
The following is the $C_4$
root diagram.
$$
\begin{tikzpicture}
 \draw{(-4.5,0)--(6,0)};
 \foreach \x in {0,1.5, 3, 4.5} 
 \draw{(-4.5+\x,-\x-1.5)--(6-\x,-\x-1.5) 
           (-4.5+\x,0)--(-4.5+\x,-1.5-\x)
           (6-\x, 0)--(6-\x, -1.5-\x)};
\pgftext[base, left,x=-4.47cm,y=-0.8cm]{\tiny{$\al_{1,3}+\al_{1,4}$}};
\pgftext[base, left,x=-2.95cm,y=-0.8cm]{\tiny{$ \al_{2,3}+ \al_{1,4}$}};
\pgftext[base, left,x=-1.35cm,y=-0.8cm]{\tiny{$ \al_{3}+ \al_{1,4}$}};
\pgftext[base, left,x=0.45cm,y=-0.8cm]{\tiny{$ \al_{1,4}$}};
\pgftext[base, left,x=1.95cm,y=-0.8cm]{\tiny{$ \al_{1,3}$}};
\pgftext[base, left,x=3.45cm,y=-0.8cm]{\tiny{$ \al_{1,2}$}};
\pgftext[base, left,x=5.1cm,y=-0.8cm]{\tiny{$ \al_{1}$}};

\pgftext[base, left,x=-2.95cm,y=-2.3cm]{\tiny{$\al_{2,3}+ \al_{2,4}$}};
\pgftext[base, left,x=-1.34cm,y=-2.3cm]{\tiny{$ \al_{3}+ \al_{2,4}$}};
\pgftext[base, left,x=0.45cm,y=-2.3cm]{\tiny{$ \al_{2,4}$}};
\pgftext[base, left,x=1.95cm,y=-2.3cm]{\tiny{$ \al_{2,3}$}};
\pgftext[base, left,x=3.55cm,y=-2.3cm]{\tiny{$ \al_{2}$}};

\pgftext[base, left,x=-1.34cm,y=-3.8cm]{\tiny{$\al_{3}+ \al_{3,4}$}};
\pgftext[base, left,x=0.45cm,y=-3.8cm]{\tiny{$ \al_{3,4}$}};
\pgftext[base, left,x=2.05cm,y=-3.8cm]{\tiny{$ \al_{3}$}};

\pgftext[base, left,x=0.55cm,y=-5.3cm]{\tiny{$\al_{4} $}};

\end{tikzpicture}
$$ 
 \end{exa}
 
As for type $A_n$, the root order corresponds to the 
reverse order of the matrix positions, 
i.e, if $\be$ fills the position $(i,j)$ and $\ga$ fills the position $(h,
k)$, then $\be\geq \ga$ if and only 
if $i\leq h$ and $j\leq k$. 
We shall identify  the $C_n$ diagrams and subdiagrams with the corresponding
sets of roots.  

\bigskip

In type $C_n$, there is a unique standard parabolic facet $F_n$ and this is
the convex hull of the unique 
maximal abelian ideal $M:=M_n= \{ \al \in \Phi ^+ \mid (\al
, \breve{\omega}_{n}) >0 \}$ (see Section \ref{idealiabeliani}), which is the
dual order ideal generated by the unique long simple root  
$\al_n$.
This is the convex polytope having exactly the positive long roots as its
set of vertices.

$$
\begin{tikzpicture}
[scale=.6]
\label{c8}
\foreach \x in {0,1, 2, 3, 4, 5, 6, 7}
\fill[gray!50](-7+ \x ,0) rectangle (1,-1-\x);
 \draw{(-7,0)--(8,0)};
 \foreach \x in {0,1, 2, 3, 4, 5, 6, 7}
 \draw{(-7+\x,-\x-1)--(8-\x,-\x-1) 
           (-7+\x,0)--(-7+\x,-1-\x)
           (8-\x, 0)--(8-\x, -1-\x)};
\pgftext[base, left, x=-7cm, y=-6cm]{\Large $M$ in type   $C_8$};
\end{tikzpicture}
$$

In the diagram representation of $\Phi^+$ and $M$, the facets 
of $F$ are obtained intersecting with one of the hyperplanes
$\lambda_k^\perp$ orthogonal to the long root $\lambda_k$, $k\in [n]$, and 
hence are the convex hulls of the sets obtained by removing from $M$ the
roots that are also in
$U_{\lambda_k} \cup R_{\lambda_k} \cup \{\lambda_k\}$, 
where, for any long positive root
$\lambda$, we have set
\begin{equation}\label{u-r}
\begin{split}
U_{\lambda}:= \{\be \in \Phi^+ \mid \be - \lambda \in \Phi^+\}\\
R_{\lambda}:= \{\be \in \Phi^+ \mid \lambda - \be \in \Phi^+\}
\end{split}
\end{equation}
In the diagram representation of the root poset, the roots
in $U_{\lambda}$ and $R_{\lambda}$ are obtained from $\lambda$,
respectively, going up and to the right.

$$
\begin{tikzpicture}
[scale=.6]
\fill[gray!50](-3,0) rectangle (-2,-4);
\fill[gray!50](-2,-4) rectangle (4,-5);
 \draw{(-7,0)--(8,0)};
 \foreach \x in {0,1, 2, 3, 4, 5, 6, 7}
 \draw{(-7+\x,-\x-1)--(8-\x,-\x-1) 
           (-7+\x,0)--(-7+\x,-1-\x)
           (8-\x, 0)--(8-\x, -1-\x)};
\pgftext[base, left,x=-2.75cm,y=-4.8cm]{\Large{$ \lambda$}};
\end{tikzpicture}
$$

\subsection{Types $B$ and $D$}
For $\Phi$ of type $B_n$ or $D_n$, $n\geq 4$, the lattice of regions of
$\mc H$ is
strictly finer than the fan associated to $\mc P$. In fact, some
hyperplanes in $\mc H$ cut some facets of $\mc P$ into two
nontrivial parts.  
We show this for $\mc P$ of type $B_n$; since the embedded $D_n$ made of
the long roots of $B_n$ has the same  polytope, 
we obtain the analogous result also for $D_n$.  
So let $\Phi$ be of type $B_n$, $n\geq 4$. From Table 2, we see that
$\breve\omega_1^\perp$
belongs to $\mc H$. 
\par
First we note that the highest short root $\theta_s$ of $\Phi$ is parallel to $\breve\omega_1$. (Dually this means that the highest root of type $C_n$ is orthogonal to the simple roots $\alpha_i$, $i \in [2,n]$ of type $C_n$,  i.e. is parallel to the first fundamental coweight, and this can be checked from the extended Dynkin diagram of
$C_n$).

Since the short positive roots in $\Phi$ form an orthogonal basis of
$\gen\,\Phi$, the orbit of $\breve\omega_1$ consists of the vectors of an
orthogonal basis of $\gen\,\Phi$ together with their opposites. 
This implies, in particular, that $\real \breve\omega_1$ belongs to the
intersection lattice of  $\mc H$, being the intersection of the
$n-1$ hyperplanes other than $\breve\omega_1^\perp$ in the orbit
$[\breve\omega_1^\perp]$. 
But from \cite[Proposition 3.3]{C-M}, we know that  $\real \breve\omega_1$
contains the
barycenter of
the standard parabolic facet $F_1$ of $\mc P$, therefore each of the
hyperplanes
in $[\breve\omega_1^\perp] \setminus \breve\omega_1^\perp$ cuts $F_1$ in two
nontrivial parts.

\section{Principal maximal abelian ideals of the Borel subalgebra}
\label{idealiabeliani} 
In this section, we first provide some general results on principal abelian
ideals that hold for all irreducible root systems.
Then we shall restrict our attention to the types $A_n$ and $C_n$.
In these cases, the principal abelian ideals corresponding to the standard
parabolic facets are exactly the maximal abelian ideals. 
In these cases, we will construct a triangulation of the standard parabolic facets
related to the poset of the abelian ideals. 
In this section, we state the results, which are formally the same for both types. 
The proofs will be given, separately for the two types,  in Sections \ref{tr-a} and \ref{tr-c}.
\par

For any positive root $\al$, we denote by $M_\al$ the dual order ideal of the root poset generated by $\al$:
$$
M_\al=\{\be\in \Phi^+\mid  \be \geq \al\};
$$
for $i\in[n]$, we set $M_i=M_{\al_i}$, so that  
$$
M_i= \{\be\in \Phi^+\mid (\be, \breve{\omega_i})>0\}.
$$

Recall that we denote by $m_\al$, $\al\in \Pi$, the coordinates of $\theta$ and we set $m_i=m_{\al_i}$, i.e. $m_i=(\theta, \breve\omega_i)$.
\begin{lem}
\label{uno}
The principal ad-nilpotent ideal $M_i$ is abelian if and only if $m_i=1$.
\end{lem}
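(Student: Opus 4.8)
The plan is to use the two equivalent descriptions of $M_i$ recalled just above the statement --- as the dual order ideal of $\Phi^+$ generated by $\al_i$, and as $\{\be\in\Phi^+\mid (\be,\breve\omega_i)>0\}=\{\be\in\Phi^+\mid c_i(\be)\geq 1\}$ --- together with the fact that, viewing $M_i$ as a subset of $\Phi^+$, the ideal it generates is abelian precisely when $(M_i+M_i)\cap\Phi=\emptyset$.

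For the ``if'' direction, assume $m_i=1$, and argue by contradiction. If $\be,\ga\in M_i$ with $\epsilon:=\be+\ga\in\Phi$, then $\epsilon\in\Phi^+$ (a sum of two positive roots), so $\epsilon\leq\theta$ in the root poset and hence $c_i(\epsilon)\leq c_i(\theta)=m_i=1$ by monotonicity of the coordinates along $\leq$; but $c_i(\epsilon)=c_i(\be)+c_i(\ga)\geq 2$, a contradiction. Therefore $(M_i+M_i)\cap\Phi=\emptyset$ and $M_i$ is abelian.

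For the ``only if'' direction I would prove the contrapositive. Since $\al_i\leq\theta$ we have $m_i=c_i(\theta)\geq 1$, so $m_i\neq 1$ forces $m_i\geq 2$; assume $m_i\geq 2$. The set $\{\de\in\Phi^+\mid c_i(\de)\geq 2\}$ contains $\theta$, hence is nonempty; pick $\de$ minimal in it for the root order. Then $\de$ is not simple, so by the classical fact that every non-simple positive root is of the form $\be'+\al_j$ with $\be'\in\Phi^+$ and $\al_j\in\Pi$ (which follows from $(\de,\de)>0$, giving $(\de,\al_j)>0$ for some $\al_j$ in the support of $\de$, and then $\de-\al_j\in\Phi^+$), there is $\al_j\in\Pi$ with $\de-\al_j\in\Phi^+$. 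If $j\neq i$ then $c_i(\de-\al_j)=c_i(\de)\geq 2$ while $\de-\al_j<\de$, contradicting the minimality of $\de$; hence $j=i$. Now $\be:=\de-\al_i\in\Phi^+$ satisfies $c_i(\be)=c_i(\de)-1\geq 1$, so $\be\in M_i$, and $\al_i\in M_i$ as well, so $\de=\be+\al_i\in(M_i+M_i)\cap\Phi$ and $M_i$ is not abelian.

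The step needing the most care --- and the reason the argument does not simply decompose $\theta$ itself --- is the choice of the minimal witness $\de$: for some indices $i$ with $m_i\geq 2$ the vector $\theta-\al_i$ is not a root (for instance $i=1$ in type $E_8$), so one cannot peel $\al_i$ off $\theta$ directly; passing to a $\de$ that is minimal with $c_i(\de)\geq 2$ is exactly what forces the simple summand supplied by the classical fact to be $\al_i$. Everything else is immediate from the monotonicity of the coordinates $c_j$ along the root order.
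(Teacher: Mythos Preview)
Your proof is correct and follows essentially the same approach as the paper's. Both arguments handle the ``if'' direction by the obvious coordinate count, and for the converse both pick a minimal positive root $\de$ with $c_i(\de)\geq 2$, peel off a simple root, and use minimality to force that simple root to be $\al_i$; your version is slightly more detailed in justifying the peeling step and in explaining why one cannot simply work with $\theta$ directly, but the substance is the same.
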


\begin{proof}
The statement of the lemma is equivalent to the assertion that $M_i$ is abelian if and only if every positive root has 0 or 1 as $i$-th coordinate with respect to the basis given by the simple roots. 
Clearly, if  every positive root has $i$-th coordinate equal to 0 or 1, then the sum of two roots in $M_i$ cannot be a root, and hence $M_i$ is abelian. 
Conversely, by contradiction, let $\be$ be a minimal root in $M_i$ with $i$-th coordinate $\geq 2$: take a simple root $\al$ such that $\be - \al \in \Phi^+$. 
By minimality, $\al = \al_i$ and both $\al_i$, $\be-\al_i$ are in $M_i$. This is a contradiction since $M_i$ is abelian. 
\end{proof}

For instance, in type $A_n$, all ideals $M_i$ are abelian while, in type $C_n$, only  $i_{M_n}$ is.

\begin{lem}\label{unomax}
If $M_i$ is abelian, then it is a maximal abelian ideal.
\end{lem}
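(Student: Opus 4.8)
The plan is to argue by contradiction, producing from a strictly larger abelian ideal two of its roots whose sum is again a root. By Lemma~\ref{uno}, the hypothesis that $M_i$ is abelian means $m_i=1$; since $\be\le\theta$ gives $0\le c_i(\be)\le c_i(\theta)=m_i$ for every $\be\in\Phi^+$, we get $c_i(\be)\in\{0,1\}$ for all positive roots $\be$. First I would record that, under this hypothesis, $M_i=\{\be\in\Phi^+\mid c_i(\be)=1\}$, hence $\Phi^+\setminus M_i=\{\be\in\Phi^+\mid c_i(\be)=0\}$. One inclusion is immediate: $\be\ge\al_i$ forces $c_i(\be)\ge1$, hence $c_i(\be)=1$. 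For the reverse inclusion I would induct on the height of $\be$: if $\be$ is simple, then $c_i(\be)=1$ forces $\be=\al_i$; otherwise pick a simple root $\al_j$ with $\be-\al_j\in\Phi^+$, and either $j=i$, giving $\be\ge\al_i$ at once, or $j\ne i$, whence $c_i(\be-\al_j)=1$, so $\be-\al_j\ge\al_i$ by induction and therefore $\be\ge\al_i$.

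Now suppose, for contradiction, that $J$ is an abelian ideal of $\Phi^+$ with $M_i\subsetneq J$, and choose $\ga\in J\setminus M_i$, so that $c_i(\ga)=0$. The set $S=\{\de\in\Phi^+\mid\de\ge\ga\text{ and }c_i(\de)=1\}$ is nonempty, as $\theta\in S$; let $\de_0$ be a minimal element of $S$ in the root poset. Since $c_i(\de_0)=1\ne0=c_i(\ga)$, we have $\de_0>\ga$, so there is a saturated chain from $\ga$ up to $\de_0$; let $\de_1$ be the root immediately below $\de_0$ on it, so that $\de_0=\de_1+\al_j$ for some simple $\al_j$ and $\de_1\ge\ga$. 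If $j\ne i$, then $c_i(\de_1)=c_i(\de_0)=1$, so $\de_1\in S$, contradicting the minimality of $\de_0$; therefore $j=i$, i.e.\ $\de_1=\de_0-\al_i\in\Phi^+$ and $\de_1\ge\ga$.

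It then follows that $\de_1\in J$, since $J$ is a dual order ideal containing $\ga$ and $\de_1\ge\ga$; also $\al_i\in M_i\subseteq J$; and $\de_1+\al_i=\de_0$ is a root. This contradicts the abelian condition $(J+J)\cap\Phi=\emptyset$, so no abelian ideal properly contains $M_i$, i.e.\ $M_i$ is a maximal abelian ideal. The one point to be careful about is the \emph{minimality-plus-one-descent} step in the second paragraph: the argument works precisely because the simple root removed at the top of the chain is forced to be $\al_i$, which is exactly where the minimality of $\de_0$ together with the restriction $c_i(\be)\in\{0,1\}$ enters; the rest is routine.
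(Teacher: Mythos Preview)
Your proof is correct and follows essentially the same idea as the paper's: produce a root $\de_1\notin M_i$ with $\de_1+\al_i\in\Phi$, so that any ideal containing both $M_i$ and $\de_1$ fails to be abelian. The paper does this more directly by taking $\ga$ to be a \emph{maximal} element of $\Phi^+\setminus M_i$ from the start (any cover of such a $\ga$ is then forced to be $\ga+\al_i$), whereas you start from an arbitrary $\ga\in J\setminus M_i$ and climb up to locate the boundary; but this is a cosmetic difference, not a different argument. Your first paragraph, incidentally, re-derives the equality $M_i=\{\be\in\Phi^+\mid c_i(\be)>0\}$, which the paper already records immediately after the definition of $M_i$, so you could shorten by citing that.
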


\begin{proof}
Let $M_i$ be abelian and let $\ga$ be a maximal positive root not in $M_i$. 
Let $\be \rhd \ga$, i.e. $\beta$ covers $\gamma$ in the root poset. Hence $\be \in M_i$ by maximality and, since the root poset is ranked by the height, $\be = \ga + \al$ for a certain  simple root $\al$. 
Since $\ga \notin M_i$ and $\be \in M_i$, we have that $\al= \al_i$.
Thus we get the assertion since there is no abelian ideal containing both $\ga$ and $\al_i$ since their sum is a root.
\end{proof}

We set
$$
W^i=\{w\in W\mid D_r(w)\subseteq \{\al_i\}\},
$$
where $D_r(w)$ is the set of right descents of $w$. 
It is well known that $W^i$ is the set of minimal length representatives of the left cosets  $W/W\la\Pi \setminus \{ \al_i \} \ra$. 

\begin{lem}\label{Wi-Mi}
Let $w\in W$. Then $w\in W^i$ if and only if $\ov N(w)\subseteq M_i$.
\end{lem}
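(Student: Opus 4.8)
The plan is to reduce the statement to the identity $D_r(w)=\ov N(w)\cap \Pi$ from \eqref{discese}, combined with the support property \eqref{supporto} applied to $w^{-1}$. As a preliminary, I would record that $M_i\cap \Pi=\{\al_i\}$: since $(\al_j,\breve\omega_i)=\de_{ij}$, a simple root $\al_j$ lies in $M_i=\{\be\in \Phi^+\mid (\be,\breve\omega_i)>0\}$ exactly when $j=i$.

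For the ``if'' direction, suppose $\ov N(w)\subseteq M_i$. Intersecting with $\Pi$ and using \eqref{discese} gives $D_r(w)=\ov N(w)\cap \Pi\subseteq M_i\cap \Pi=\{\al_i\}$, so $w\in W^i$. This implication is immediate.

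The content is in the converse. Assume $w\in W^i$, that is $D_r(w)\subseteq\{\al_i\}$, equivalently $\ov N(w)\cap \Pi\subseteq\{\al_i\}$ by \eqref{discese}. Since $\ov N(w)=N(w^{-1})$, the set $\Phi^+\setminus N(w^{-1})$ is convex, so \eqref{supporto} read for $w^{-1}$ yields $\supp(\be)\cap \ov N(w)\neq\emptyset$ for every $\be\in \ov N(w)$. As $\supp(\be)\subseteq \Pi$, we obtain $\emptyset\neq \supp(\be)\cap \ov N(w)\subseteq \ov N(w)\cap \Pi\subseteq\{\al_i\}$, hence $\al_i\in \supp(\be)$, i.e. $c_i(\be)\geq 1$, i.e. $(\be,\breve\omega_i)>0$, i.e. $\be\in M_i$. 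Therefore $\ov N(w)\subseteq M_i$.

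I do not expect a genuine obstacle here: the one point to be attentive about is to invoke \eqref{supporto} for $w^{-1}$ rather than $w$, using $\ov N(w)=N(w^{-1})$, so that the convex complement in play is $\Phi^+\setminus N(w^{-1})$; everything else is bookkeeping.
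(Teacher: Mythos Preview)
Your proof is correct and follows exactly the approach the paper takes: the paper's proof invokes precisely Property \eqref{supporto} and Equation \eqref{discese} together with the observation that $M_i\cap\Pi=\{\al_i\}$, which is what you do, only you spell out the details (including the application of \eqref{supporto} to $w^{-1}$) that the paper leaves implicit.
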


\begin{proof}
The only simple root in $M_i$ is $\al_i$, hence the claim follows directly from Property 
(\ref{supporto}) and Equation (\ref{discese}). 
\end{proof}

\begin{pro}
\label{vecchiatfae}
Let $i \in [n]$ be such that $m_i=1$, and $N\subseteq M_i$.  
Then $M_i\setminus N$ is an abelian ideal if and only if there exists $w\in  W^{i}$ such that $N=\ov N(w)$. 
\end{pro}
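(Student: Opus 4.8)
The plan is to translate the condition ``$M_i\setminus N$ is an abelian ideal'' into a purely poset-theoretic condition on $N$, and then match that condition with the description of the sets $\ov N(w)$ recalled in Subsection \ref{Nw}. Since $m_i=1$, Lemma \ref{uno} says that $M_i$ itself is abelian, so every subset of $M_i$ automatically satisfies the abelian condition; hence, for $N\subseteq M_i$, the set $M_i\setminus N$ is an abelian ideal if and only if it is a dual order ideal of $\Phi^+$. As $M_i$ is already a dual order ideal and $\be\geq\al\geq\al_i$ forces $\be\in M_i$, a direct check shows that $M_i\setminus N$ is a dual order ideal precisely when $N$ is an order ideal (a down-set) of the induced subposet $(M_i,\leq)$: the ``upward closed'' condition on $M_i\setminus N$ is exactly the ``downward closed'' condition on $N$ inside $M_i$. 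So the proposition reduces to the claim that $N\subseteq M_i$ is a down-set of $(M_i,\leq)$ if and only if $N=\ov N(w)$ for some $w\in W^i$.

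For the ``if'' direction, suppose $N=\ov N(w)$ with $w\in W^i$, and put $v=w^{-1}$, so $N=N(v)$; by Lemma \ref{Wi-Mi} we already have $N\subseteq M_i$. Since any $\ga\leq\de$ inside $M_i$ is joined by a chain of covering relations $\al\lhd\al'$ (with $\al'-\al\in\Pi$) lying entirely in $M_i$, it suffices to show: if $\de=\ga+\al_j$ with $\ga,\de\in M_i$ and $\de\in N$, then $\ga\in N$. Comparing $i$-th coordinates and using $m_i=1$ shows that $\al_j$ has $i$-th coordinate $0$, so $j\neq i$; since $N(v)\cap\Pi=D_r(w)\subseteq\{\al_i\}$ by \eqref{discese}, we get $\al_j\notin N(v)$. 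If $\ga\notin N(v)$, then $\ga$ and $\al_j$ both lie in $\Phi^+\setminus N(v)$, which is closed, so $\de=\ga+\al_j\in\Phi^+\setminus N(v)$, contradicting $\de\in N$. Hence $\ga\in N$, and $N$ is a down-set of $(M_i,\leq)$.

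For the ``only if'' direction, assume $N\subseteq M_i$ is a down-set of $(M_i,\leq)$; I will verify that both $N$ and $\Phi^+\setminus N$ are closed, and then apply the implication recalled in Subsection \ref{Nw} that yields $v\in W$ with $N=N(v)$. Closedness of $N$ is immediate, since $N\subseteq M_i$ and $M_i$ is abelian, so no sum of two roots of $N$ is a root. For $\Phi^+\setminus N$, suppose $\be,\ga\in\Phi^+\setminus N$ with $\de:=\be+\ga\in\Phi^+$ but $\de\in N\subseteq M_i$; then $\de$ has $i$-th coordinate $1$, so exactly one of $\be,\ga$ — say $\be$ — has $i$-th coordinate $1$, whence $\be\in M_i$ and $\be\leq\de$. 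Since $N$ is a down-set of $M_i$ and $\de\in N$, this forces $\be\in N$, a contradiction. Therefore $N=N(v)$ for some $v\in W$; setting $w=v^{-1}$ gives $\ov N(w)=N\subseteq M_i$, so $w\in W^i$ by Lemma \ref{Wi-Mi}.

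The only delicate ingredient in the argument is the elementary coordinate bookkeeping that keeps exploiting $m_i=1$: it is what forces the simple root occurring in a covering relation inside $M_i$ to be different from $\al_i$ (hence invisible to $\ov N(w)$ for $w\in W^i$), and what forces exactly one of two summands of a root of $M_i$ to again lie in $M_i$. Once these are noted, the equivalence is a routine combination of Lemma \ref{uno}, Lemma \ref{Wi-Mi}, \eqref{discese}, and the closed-set characterization of the sets $N(w)$.
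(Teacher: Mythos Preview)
Your proof is correct and follows essentially the same route as the paper's: both directions hinge on the closed-set characterization of the sets $N(v)$ from Subsection~\ref{Nw}, Lemma~\ref{Wi-Mi}, and the coordinate observation that $m_i=1$ forces every root of $M_i$ to have $i$-th coordinate exactly $1$. Your reformulation ``$M_i\setminus N$ is a dual order ideal $\Leftrightarrow$ $N$ is a down-set of $(M_i,\leq)$'' is a clean repackaging but not a different idea. The only visible variation is in the ``if'' direction: the paper writes $\be'=\be+\sum c_j\al_j$ with $j\neq i$ and invokes convexity of $\Phi^+\setminus N(v)$ in one stroke, whereas you climb along covering relations using closedness step by step; both are the same mechanism, and your reduction to covering relations is justified by the paper's remark in Subsection~\ref{ideals} that the root order is the transitive closure of the covering relation $\al\lhd\be\Leftrightarrow\be-\al\in\Pi$.
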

 
\begin{proof}
By Lemma \ref{uno}, $M_i$ is an abelian ideal: in particular the sum of any two roots in $M_i$ is not a root. 
\par

Assume that $M_i\setminus N$ is an abelian ideal. 
By Lemma \ref{Wi-Mi}, it suffices to prove that $N=\ov N(w)$ for some $w\in W$ or, equivalently, that $N$ and $\Phi^+\setminus N$ are both closed. 
Clearly, $N$ is closed because it is contained in $M_i$. 
Suppose  $\be, \be' \in \Phi^+ \setminus N$. 
If both are in $M_i\setminus N$, by the same argument as before $\be + \be'$ is not a root. 
If both do not belong to $M_i$, then also $\be+ \be'$ does not belong to $M_i$ since its $i$-th coordinate is 0, and hence it belongs to $\Phi^+ \setminus N$. 
If one of the roots, say $\be$, is in $M_i \setminus N$, the other is not in $M_i$ and $\be + \be'$ is a root, then $\be + \be' \in M_i$ since its $i$-th coordinate is 1, and hence $\be + \be'$ belongs to $M_i \setminus N$ since this set is a dual order ideal.
\par

Conversely,  assume that $N=\ov N(w)$ for some $w\in W^i$, and let $\be \in M_i\setminus N$,  $\be'>\be$. 
Then $\be'-\be$ is sum of positive roots not in $M_i$, since $(\be, \breve{\omega_i})=(\be' , \breve{\omega_i})=1$. 
Hence $\be'$ is a positive linear combination of roots in $\Phi^+\setminus N$ and thus belongs to $\Phi^+\setminus N$ since this is a convex set.
\end{proof}

\begin{pro}
\label{dispone} 
Fix $i \in [n]$ such that $m_i=1$. Let $w \in W^i$, $\ga, \, \ga' \in M_i$, and $\ga ' \leq \ga$. Then $w(\ga ') \leq w(\ga)$.
\end{pro}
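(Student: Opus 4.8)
The plan is to reduce everything to the single fact that an element of $W^i$ sends each simple root other than $\al_i$ to a positive root, and then to apply $w$ term by term to the difference $\ga-\ga'$ expressed in the basis $\Pi$.

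First I would record that $\ga-\ga'$ involves no $\al_i$. Since $m_i=1$, every positive root has $i$-th coordinate $0$ or $1$ (see the proof of Lemma~\ref{uno}), so that $M_i=\{\be\in\Phi^+\mid c_i(\be)=1\}$ and in particular $c_i(\ga)=c_i(\ga')=1$. Writing $\ga-\ga'=\sum_{l\in[n]}d_l\,\al_l$ in the basis of simple roots, we then have $d_i=c_i(\ga)-c_i(\ga')=0$; moreover $\ga'\le\ga$ means that $\ga-\ga'$ is a nonnegative linear combination of positive roots, and since each positive root has nonnegative coordinates, every $d_l$ is a nonnegative integer. Hence $\ga-\ga'=\sum_{l\ne i}d_l\,\al_l$ with all $d_l\ge 0$.

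Next I would use the hypothesis $w\in W^i$, which by definition means $D_r(w)\subseteq\{\al_i\}$. Combined with \eqref{discese}, namely $D_r(w)=\ov N(w)\cap\Pi$, this gives $\al_l\notin\ov N(w)$ for every $\al_l\in\Pi\setminus\{\al_i\}$; by the definition of $\ov N(w)$ this means $w(\al_l)\notin-\Phi^+$, hence $w(\al_l)\in\Phi^+$. Applying $w$ to the expression of $\ga-\ga'$ obtained above,
\[
w(\ga)-w(\ga')=\sum_{l\ne i}d_l\,w(\al_l)
\]
is a nonnegative linear combination of positive roots, which by the definition of the order $\le$ of the root lattice is exactly the assertion $w(\ga')\le w(\ga)$.

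I do not expect any real difficulty: the only point needing a little care is to obtain $c_i(\ga)=c_i(\ga')$ \emph{exactly}, rather than merely $c_i(\ga)\ge c_i(\ga')$, and this is precisely where the assumption $m_i=1$ is used, through the bound $c_i(\be)\le m_i$ valid for all positive roots. Everything else is formal: the defining property of the coset representatives $W^i$ and the definition of the root order.
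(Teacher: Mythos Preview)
Your proof is correct and follows essentially the same route as the paper's: write $\ga-\ga'$ as a nonnegative combination of the simple roots $\al_l$ with $l\ne i$ (using $m_i=1$ to force $c_i(\ga)=c_i(\ga')=1$), then use that $w\in W^i$ sends each such $\al_l$ to a positive root. The paper's argument is just a terser version of what you wrote.
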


\begin{proof}
If $\ga, \ga' \in M_i$ and $\ga' \leq \ga$, then $\ga- \ga'$ is a sum of simple roots different from $\al_i$.
If $w \in W^i$, $w$ maps all simple roots other than $\al_i$ to positive roots, hence $w(\ga -\ga)=w(\ga)-w(\ga')$ is a sum of positive roots, i.e. $w(\ga')\leq w(\ga)$. 
\end{proof}

\begin{rem}\label{selecta}
By \cite[Remark 7.3]{CMPP}, for all long simple roots $\al$, if $m_\al=1$, then $\max \mc I_\ab(\al)= M_\al$.  
Since in types $A$ and $C$ we have that $m_\al=1$ for all long simple roots $\al$, by Theorem \ref{panyushev} we obtain that the ideals of the form $M_\al$ are the unique maximal abelian ideals, in these cases. 
\end{rem}

By Theorem \ref{CM}, the standard parabolic facets of  $\mc P_\Phi$ are
the convex hulls of the sets $V_{\al_i}=\{\al\in \Phi^+\mid
(\al,\breve{\omega}_i)= m_i\}$ for all simple roots $\al_i$ that do not
disconnect the extended Dynkin diagram of $\Phi$ when removed. By a direct
check, we see that for types $A$ and $C$ such roots $\alpha_i$ are exactly
the long simple roots and $V_{\al_i}=M_i$. Hence, by the above remark, we
obtain the following result.

\begin{thm}\label{facetsAC}
If  $\Phi$ is of type $A_n$ or $C_n$, the set of the standard parabolic facets of $\mc P_\Phi$ is the set  of convex hulls of the maximal abelian ideals of $\Phi^+$.
\end{thm}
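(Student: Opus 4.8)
The plan is to assemble the statement from facts already recorded in this section, the real content being the explicit description of the standard parabolic facets in Theorem~\ref{CM} together with the identification of the maximal abelian ideals in Remark~\ref{selecta}. First I would recall from Theorem~\ref{CM} that the standard parabolic facets of $\mc P_\Phi$ are exactly the $F_i=\conv(V_{\al_i})$ for which $\wh\Phi(\wh\Pi\setminus\{\al_i\})$ is irreducible, i.e.\ for which the removal of $\al_i$ from the extended Dynkin diagram leaves a connected graph. So the first step is a direct inspection of the two extended Dynkin diagrams.

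For $\Phi$ of type $A_n$ the extended diagram is a cycle on $n+1$ nodes, so deleting any single node produces a path; hence every $\al_1,\dots,\al_n$ qualifies, and since all roots of $A_n$ have the same length, these are precisely the long simple roots. For $\Phi$ of type $C_n$ the extended diagram is a path carrying a double bond at each end; deleting $\al_i$ with $1\le i\le n-1$ disconnects it, whereas deleting the end node $\al_n$ does not, and $\al_n$ is the unique long simple root of $C_n$. Thus in both types the relevant indices are exactly those with $\al_i\in\Pi_\ell$.

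Next I would check that $V_{\al_i}=M_i$ for these $i$. In type $A_n$ we have $m_i=1$ for every $i$, and in type $C_n$ we have $m_n=1$; so for each long simple $\al_i$ one has $m_i=1$, and by the argument in the proof of Lemma~\ref{uno} every positive root has $i$-th coordinate equal to $0$ or $1$. Therefore
$$
V_{\al_i}=\{\al\in\Phi^+\mid(\al,\breve\omega_i)=m_i\}=\{\al\in\Phi^+\mid(\al,\breve\omega_i)>0\}=M_i,
$$
so the standard parabolic facets of $\mc P_\Phi$ are precisely the convex hulls $\conv(M_{\al})$ with $\al\in\Pi_\ell$. Finally, by Remark~\ref{selecta} (which rests on Theorem~\ref{panyushev} and \cite{CMPP}), the hypothesis $m_\al=1$ for all $\al\in\Pi_\ell$, valid in types $A$ and $C$, guarantees that the ideals $M_\al$, $\al$ long simple, are exactly the maximal abelian ideals of $\Phi^+$. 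Combining the two resulting descriptions of sets yields the theorem. I do not expect a genuine obstacle: the only real content is the short diagram inspection in the first step, and once the non-disconnecting simple roots have been identified with the long simple roots, the rest is bookkeeping among results already proved.
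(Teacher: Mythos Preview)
Your proposal is correct and follows exactly the same route as the paper: invoke Theorem~\ref{CM} for the description of the standard parabolic facets, inspect the extended Dynkin diagrams of $A_n$ and $C_n$ to see that the non-disconnecting simple roots are precisely the long ones with $m_i=1$, deduce $V_{\al_i}=M_i$, and then appeal to Remark~\ref{selecta}. You have simply spelled out in detail what the paper records as ``a direct check''.
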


In Sections \ref{tr-a} and \ref{tr-c}, we construct a triangulation $\mc T$ of the polytope $\mc P_\Phi$ for $\Phi$ of type $A$ or $C$,  which is 
related to the poset of abelian ideals. The triangulation $\mc T$ contains extra vertices, besides the vertices of $\mc P$. In type $A$, we add only the vertex
$\un 0$; in type $C_n$, we add $\un 0$ and the short roots. In both cases, the set of vertices of any simplex in $\mc T$ is a $\ganz$-basis of the root lattice plus the vertex $\un 0$.

In order to obtain a triangulation of $\mc P_\Phi$, we construct a
triangulation of the standard parabolic facets and transport it to the whole
polytope by the action of $W$. For the types $A_n$ or $C_n$, we should provide a
triangulation of the facets 
$F_\al$, for all long simple $\al$. 

For each abelian ideal $I$ in $\Phi^+$, we define  the {\it border}  $B(I)$ of $I$ as follows:
\begin{equation}\label{bordo}
B(I)=\{\be\in I\mid \gamma_1, \gamma_2\in \Pi, \be-\gamma_1 \text{ and } \be-\gamma_2\in \Phi^+ \Rightarrow \be-\gamma_1-\gamma_2 \not\in I\}.
\end{equation}

\begin{thm}\label{tricomune}
Let $\Phi$ be of type $A_n$ or $C_n$ and let $\al$ be a long simple root in
$\Pi$. Then:
\begin{itemize}
\item[(1)] for all ideals $I\subseteq M_\al$, $\dim(\gen\, I)=n $ if and only if 
$I\in \mc I_\ab(\al)$; 
\item[(2)] for all $I\in \mc I_\ab(\al)$, $B(I)$ is a basis of the root lattice; 
\item[(3)] for all $I\in \mc I_\ab(\al)$, $\{\conv(B(J))\mid J\in \mc I_\ab(\al) \text{ and }J\subseteq I\}$  is a triangulation of $\conv(I)$. 
\end{itemize}
In particular, $\{\conv(B(I))\mid I\in \mc I_\ab(\al)\}$  is a triangulation of~$F_\al$.
\end{thm}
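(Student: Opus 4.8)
The plan is to treat the three assertions as a chain, with (3) being the heart of the matter. For part (1), I would use Proposition \ref{Ialfa}: an abelian ideal $I\subseteq M_\al$ spans all of $\gen\,\Phi$ precisely when it is ``large enough'', and the natural candidate for the cutoff is membership in $\mc I_\ab(\al)$. Concretely, since $\al$ is long simple, $\mc I_\ab(\al)$ has a minimum $I_{\min}(\al)$ by Theorem \ref{panyushev}(1), and I would first show $\gen\, I_{\min}(\al)=\gen\,\Phi$ by exhibiting an explicit integral basis inside it (this is where the type-by-type Young/skew-diagram description enters, and is deferred to Sections \ref{tr-a} and \ref{tr-c}); conversely, if $I\subseteq M_\al$ is abelian but $I\notin \mc I_\ab(\al)$, then by Proposition \ref{Ialfa} there is a pair $\be,\ga\in\Phi^+$ with $\be+\ga=\theta$ and \emph{neither} in $I$ (it cannot be that both are in $I$, since $I$ is abelian and $\be+\ga=\theta\in\Phi$), and one shows such an $I$ is forced to lie in a proper coordinate subspace. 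The cleanest way is to observe $M_\al\supseteq I$ and that the ideals properly below all of $\mc I_\ab(\al)$ in $M_\al$ are exactly the ones missing some ``cross-diagonal'' pair, which pins them inside a hyperplane spanned by a subset of $\Pi\cup\{-\theta\}$.

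For part (2), I would show that for $I\in\mc I_\ab(\al)$ the border $B(I)$, defined in \eqref{bordo}, has exactly $n$ elements and is an integral basis of the root lattice. The containment $B(I)\subseteq I$ is immediate; the count and the basis property I expect to prove by induction on the poset $\mc I_\ab(\al)$, descending from the maximum $I(\al)=M_\al$. Here the key structural fact is Panyushev's description of the poset $\mc I_\ab(\al)$ together with the combinatorics of the (skew) Young diagram: passing from $I$ to a maximal $J\in\mc I_\ab(\al)$ with $J\subsetneq I$ removes a single ``outer corner'' root and replaces it in the border by one new root, which one checks keeps the determinant of the coordinate matrix equal to $\pm1$. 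Propositions \ref{vecchiatfae} and \ref{dispone} are the tools that make this bookkeeping uniform: since $m_\al=1$ (Remark \ref{selecta}), complements $M_\al\setminus I$ are exactly the sets $\ov N(w)$ with $w\in W^\al$, and $w$ preserves the order on $M_\al$, so the combinatorics of removing a root is controlled by right-multiplication by a simple reflection.

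For part (3) — the main obstacle — I would argue that $\{\conv(B(J))\mid J\in\mc I_\ab(\al),\ J\subseteq I\}$ tiles $\conv(I)$. The strategy is induction on $|I|$ along $\mc I_\ab(\al)$: the simplices $\conv(B(J))$ with $J$ ranging over the maximal proper sub-ideals of $I$ in $\mc I_\ab(\al)$ triangulate the ``lower boundary'' of $\conv(I)$ by the inductive hypothesis, and coning each of them over... no — rather, one shows $\conv(I)=\bigcup_{J}\conv(B(J))$ by a volume argument combined with interior-disjointness. Interior-disjointness follows because distinct maximal sub-ideals $J,J'$ are separated by one of the coordinate hyperplanes $w(\breve\omega_k)^\perp$ from $\mc H_\Phi$ (this is exactly the point of Sections \ref{a hyp}--\ref{ac-si}: in types $A$ and $C$ the arrangement $\mc H_\Phi$ refines the facet fan and no finer, so the cells $\conv(B(J))$ inherit a separating hyperplane structure). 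For the volume count, $\vol\conv(B(J))=1/n!$ for every $J$ by part (2), so it suffices to check $\#\{J\in\mc I_\ab(\al)\mid J\subseteq I\}=n!\cdot\vol\conv(I)$; for $I=M_\al$ this is the statement that $|\mc I_\ab(\al)|=n!\,\vol F_\al$, which matches the known volume formulas recalled in Section \ref{volume}. Finally, taking $I=M_\al=I(\al)$ and using Theorem \ref{facetsAC} (which identifies $\conv(M_\al)$ with the standard parabolic facet $F_\al$) yields the triangulation of $F_\al$, completing the proof.
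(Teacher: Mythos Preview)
Your plan for parts (1) and (2) is broadly reasonable, though more elaborate than what the paper does: in type $A_n$ the paper simply identifies $B(I)$ with a minimal lattice path in the rectangle $M_i$ and invokes the general fact (Proposition~\ref{unimodulare}) that \emph{any} $n$ linearly independent roots form an integral basis; in type $C_n$ the analogous one-line argument is that consecutive path entries differ by distinct simple roots $\al_1,\dots,\al_{n-1}$ while the starting root $\al_{1,n}$ has $c_n=1$. No induction along $\mc I_\ab(\al)$ is needed.

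The real problem is your part (3). Your interior-disjointness argument rests on the claim that distinct borders $B(J),B(J')$ are separated by a hyperplane of $\mc H_\Phi$; this is false. In type $A_n$ (and likewise $C_n$) the arrangement $\mc H_\Phi$ consists of only $n+1$ (resp.\ $n$) hyperplanes, and the closure of a single region of $\mc H_\Phi$ is exactly the cone over one facet $F_i$ (Theorem~\ref{coincidono}). Thus $\mc H_\Phi$ supplies no \emph{internal} walls inside $F_i$: it sees $F_i$ as a single cell, whereas the staircase triangulation of $F_i$ has $\binom{n-1}{i-1}$ maximal simplices that must be separated by hyperplanes not in $\mc H_\Phi$. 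Your volume bookkeeping is also circular as stated: the volume identities in Section~\ref{volume} are \emph{consequences} of this triangulation, not independent input, and for a general $I\in\mc I_\ab(\al)$ you give no way to compute $\vol\conv(I)$ without already knowing the answer. Finally, even granting interior-disjointness and matching volumes, you have not addressed why intersections of simplices are common \emph{faces}, which is part of the definition of a triangulation.

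The paper proceeds quite differently. For $I=M_\al$ it gives direct, type-specific arguments: in type $A_n$ it recognises $F_i$ as a product of two simplices and $\mc T'_i$ as its classical staircase triangulation (Proposition~\ref{tri-i}); in type $C_n$ it shows constructively that every $x=\sum x_i\lambda_i\in F_n$ lies in some $\conv B(I)$ by iteratively replacing the pair $\lambda_j,\lambda_{j'}$ with extremal coefficients by their midpoint (Proposition~\ref{tri-n}). The statement for a general $I\in\mc I_\ab(\al)$ is then obtained, not by induction on $|I|$, but as a corollary of the much stronger Theorems~\ref{indotta} and~\ref{indotta_c}: one takes $w\in W^\al$ with $M_\al\setminus\ov N(w)=I$ (Proposition~\ref{vecchiatfae}), observes that $w(F_\al)\cap\mc P^+$ is tiled by a subset of $w\mc T'_\al$, and checks that this subset is exactly $\{\conv B(J):J\subseteq I\}$.
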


Theorem \ref{tricomune} will be proved in Section \ref{tr-a} for type $A$ and in Section \ref {tr-c} for type~$C$. 

\begin{rem}
If we view  $\wh W$ as a group of affine transformation of the Euclidean space in the usual way \cite{Bou}, then $\wh W_\text{\hskip -2pt\it ab}$ is the set of all elements that map the fundamental alcove $\mc A$ into $2\mc A$ \cite{C-P-Alg}. 
Moreover, if $I$ is an abelian ideal, then there exists $\al\in \Pi$ such that $I \in \mc I_\ab(\al)$ if and only if $w_I(\mc A)$ has a facet on the affine hyperplane $H_{\theta, 2}:=\{x\mid (x, \theta)=2\}$. Indeed, the facet of $\mc A$ orthogonal to $\al$ must be mapped by $w$ into $H_{\theta, 2}$. If $m_\al=1$, then the facet of $\mc A$ orthogonal to $\al$ has the same measure of the facet  of $\mc A$ orthogonal to $\theta$, since there is an element of the extended affine Weyl group that maps one facet into the other
(\cite{IM}, see also \cite{CMoP}). Therefore, since in the $A_n$ and $C_n$ cases $m_\al=1$ for all the long simple roots $\al$, we obtain that $|\bigcup_{\al\in\Pi_\ell}\mc I_\ab(\al)|=2^{n-1}$.  It follows that the total number of simplices that occur in the triangulations of the fundamental facets of $\mc P$ is $2^{n-1}$, in both types.
\end{rem}
\par

Recall that, for any subset $S$ of $\gen \,(\Phi)$, we denote by $\conv_0(S)$ the convex hull of $S\cup\{\un 0\}$. For all long simple roots $\al_i$, we set 

$$
\mc T_i=\{\conv_0(B(I))\mid  I\in \mc I_\ab(\al_i)\}.
$$
Since the stabilizer of the facet $F_i$ is the standard parabolic subgroup
$W\la \Pi\setminus \{\al_i\}\ra$,  
if for all  $\al_i\in \Pi_\ell$ we choose  any set $\mc R_i$ of representatives
of the left cosets of $W\la \Pi\setminus \{\al_i\}\ra$ in $W$  
and set 
$$
\mc R_i\mc T_i=\{wT\mid w\in \mc R_i, \ T\in \mc T_i\},
$$
then 
$$
\bigcup\limits_{\al_i\in \Pi_\ell} \mc R_i\mc T_i
$$ 
is a triangulation of $\mc P_\Phi$.

\begin{thm}\label{indottagen}
Let $\Phi$ be of type $A_n$ or $C_n$. For all $\al_i\in \Pi_\ell$, let $W^i$ be the set of minimal length representatives of the left cosets of $W/W\la \Pi\setminus \{\al_i\}\ra$ and set
$$
\mc T=\bigcup\limits_{\al_i\in \Pi_\ell}  W^i\mc T_i,\qquad
\mc T^+=\{T\in \mc T\mid T\subset \mc P^+\} 
$$
Then $\mc T$ is a triangulation of $\mc P$ and  $\mc T^+$ is a triangulation of $\mc P^+$. In particular, 
$\mc P^+=\mc P\cap \mc C^+,$ where $\mc C^+$ is the positive cone generated by $\Phi^+$.
\end{thm}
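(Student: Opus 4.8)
The plan is to reduce the theorem to Theorem \ref{tricomune} together with a few elementary facts about the cone $\mc C^+$. First, that $\mc T$ triangulates $\mc P$: since $\mc P=\conv(\Phi)$ is centrally symmetric and full dimensional, $\un 0$ lies in its interior, so the pyramids $\conv_0(F)$ over the facets $F$ of $\mc P$ tile $\mc P$. By Theorem \ref{facetsAC} and Theorem \ref{CM}, for $\Phi$ of type $A_n$ or $C_n$ the facets of $\mc P$ are precisely the $wF_i$ with $\al_i\in\Pi_\ell$ and $w$ running over a transversal of $W/W\la\Pi\setminus\{\al_i\}\ra$ (the stabilizer of $F_i$), each occurring once; $W^i$ is such a transversal. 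By Theorem \ref{tricomune}, $\mc T_i$ is the cone from $\un 0$ of the triangulation of $F_i$, hence triangulates $\conv_0(F_i)$; transporting by $W^i$ and gluing over $\al_i\in\Pi_\ell$ gives $\mc T$, the pieces being compatible along shared faces because, by part (3) of Theorem \ref{tricomune}, the triangulation of each facet restricts to a triangulation of each of its subfaces. This is essentially the content of the paragraph preceding the statement.

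Next I would record that $\mc C^+=\cone^+(\Phi^+)=\cone^+(\Pi)$ is, since $\Pi$ is a basis of $E$, the simplicial cone $\bigcap_{l\in[n]}\{x\mid (x,\breve\omega_l)\ge 0\}$; and since every negative root has a strictly negative $\breve\omega_l$-coordinate for some $l$, one has $\mc C^+\cap\Phi=\Phi^+$. Also $\mc P^+=\conv_0(\Phi^+)\subseteq\mc C^+$, hence $\mc P^+\subseteq\mc P\cap\mc C^+$. Now if $T=w\conv_0(B(I))\in\mc T$ (with $w\in W^i$, $I\in\mc I_\ab(\al_i)$), its vertex set is $\{\un 0\}\cup wB(I)\subseteq\{\un 0\}\cup\Phi$, so $T\subseteq\mc C^+$ iff $wB(I)\subseteq\mc C^+\cap\Phi=\Phi^+$ iff $T=\conv_0(wB(I))\subseteq\conv_0(\Phi^+)=\mc P^+$. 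Thus $\mc T^+=\{T\in\mc T\mid T\subseteq\mc P^+\}=\{T\in\mc T\mid T\subseteq\mc C^+\}$, which is clearly closed under faces and so is a subcomplex of $\mc T$.

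The heart of the matter is the claim that every simplex $T=w\conv_0(B(I))$ of $\mc T$ lies in one of the two closed halfspaces bounded by $\breve\omega_l^\perp$, for each $l\in[n]$; equivalently, the numbers $(\be,\breve\omega_l)$ with $\be\in wB(I)$ are all $\ge 0$ or all $\le 0$. One cannot replace $B(I)$ by $M_i$ here (already in type $A_3$ the set $s_2M_2$ has elements on both sides of $\breve\omega_2^\perp$), so the definition of the border must be exploited; this is where the two types require separate treatment and the Young, respectively skew, diagram encoding enters, and I would verify the claim there (Sections \ref{tr-a} and \ref{tr-c}). This is the step I expect to be the main obstacle. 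Granting the claim, the rest is formal: if $\operatorname{relint}(T)\cap\mc C^+\ne\emptyset$ then, as $\mc C^+$ is the intersection of the halfspaces $\{(x,\breve\omega_l)\ge0\}$, the claim forces $T\subseteq\{(x,\breve\omega_l)\ge0\}$ for every $l$ (were $T$ in the opposite closed halfspace, a point of $\operatorname{relint}(T)\cap\mc C^+$ would lie on $\breve\omega_l^\perp$, whence $T\subseteq\breve\omega_l^\perp$), so $T\subseteq\mc C^+$, hence $T\in\mc T^+$. Therefore for $p\in\mc P\cap\mc C^+$ the unique simplex of $\mc T$ whose relative interior contains $p$ lies in $\mc T^+$, giving $\mc P\cap\mc C^+=\bigcup_{T\in\mc T^+}T$; combined with $\bigcup_{T\in\mc T^+}T\subseteq\mc P^+\subseteq\mc P\cap\mc C^+$ this yields $\mc P^+=\mc P\cap\mc C^+=\bigcup_{T\in\mc T^+}T$, so $\mc T^+$ triangulates $\mc P^+$. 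Everything but the sign claim about $wB(I)$ is soft.
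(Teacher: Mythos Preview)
Your proposal is correct and follows essentially the same route as the paper: both reduce everything to the ``sign claim'' that for each $w\in W^i$, $I\in\mc I_\ab(\al_i)$, and $l\in[n]$, the values $(w(\gamma),\breve\omega_l)$ for $\gamma\in B(I)$ are all $\ge0$ or all $\le0$; both defer this to the type-specific Sections~\ref{tr-a} and~\ref{tr-c} (Theorems~\ref{indotta} and~\ref{indotta_c}), where it is proved using Proposition~\ref{dispone} together with the combinatorics of the border strip. Your deduction of $\mc P^+=\mc P\cap\mc C^+$ from the sign claim via relative interiors is a clean variant of the paper's ``null set'' argument, and your observation that $\mc T^+=\{T\in\mc T\mid T\subseteq\mc C^+\}$ (because the vertices of $T$ other than $\un 0$ are roots) makes the conclusion tidy. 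One small inaccuracy: part~(3) of Theorem~\ref{tricomune} concerns $\conv(I)$ for ideals $I$, not polytope faces of $F_i$, so it does not directly give compatibility of the facet triangulations along shared $(n-2)$-faces of $\mc P$; the paper likewise asserts that $\bigcup_i\mc R_i\mc T_i$ triangulates $\mc P$ without dwelling on this point.
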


\par
It is clear that, in general, the action of the stabilizer of a facet $F$ does not preserve a fixed triangulation of $F$. The choice of the minimal length representatives is essential in Theorem \ref{indottagen}.  
\par
Theorem \ref{indottagen} has a direct application in \cite{Ch}. 
\par
As noted in  \cite{Ch}, $\mc P^+\neq \mc P\cap \mc C^+$ for  all root types other than $A$ and $C$.

\section{Triangulation of $\mc P$: type $A$}
\label{tr-a}
Throughout  this section, $\Phi$ is a root system of type $A_n$.  
Recall that
the positive roots are the roots $\al_{i,j}:=
\sum_{h=i}^{j}
\al_h$, for all $1 \leq i \leq j \leq n$.

\bigskip

As noted in Remark \ref{selecta}, the sets $M_i=\{\beta\in \Phi\mid \beta\geq \alpha_i\}$,  $i\in [n]$, are exactly the maximal abelian ideals of $\Phi$, and $M_i=\max \mc I_\ab(\al_i)$.
In the following proposition, we specialize the result  of Proposition \ref{Ialfa} and determine all the ideals in $\mc I_\ab(\al_i)$, for all $i\in [n]$. 
 
\begin{pro}
Let $I$ be an abelian ideal and $i\in [n]$. Then $I\in \mc I_\ab(\al_i)$ if and only if  $I\subseteq M_i$ and $\{\al_{1,i},\al_{i,n}\}\subseteq I$.  
\end{pro}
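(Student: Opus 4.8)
The plan is to deduce the equivalence from Proposition~\ref{Ialfa} together with the explicit combinatorics of the highest root $\theta=\al_{1,n}$ of type $A_n$. The starting observation is that if $\be,\ga\in\Phi^+$ satisfy $\be+\ga=\theta$, then, since every coordinate of $\theta$ equals $1$, the supports $\supp(\be)$ and $\supp(\ga)$ are disjoint subintervals of $[1,n]$ covering $[1,n]$, so $\{\be,\ga\}=\{\al_{1,k},\al_{k+1,n}\}$ for a unique $k\in[1,n-1]$; conversely each such pair sums to $\theta$. Since in type $A_n$ we have $\Pi_\ell=\Pi$, Proposition~\ref{Ialfa} thus reads: an abelian ideal $I$ lies in $\mc I_\ab(\al_j)$ for some $j\in[n]$ if and only if, for every $k\in[1,n-1]$, exactly one of $\al_{1,k}$ and $\al_{k+1,n}$ belongs to $I$. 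Throughout I will use that $\al_{p,q}\in M_i$ exactly when $i\in[p,q]$, and that $\theta$ is the maximum of the root poset, so that $\theta\in I$ for every nonempty dual order ideal $I$.

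For the forward implication, assume $I\in\mc I_\ab(\al_i)$; we may assume $I\neq\emptyset$, since for $I=\emptyset$ both sides of the asserted equivalence are false. As $m_i=1$ in type $A_n$, Remark~\ref{selecta} (with Theorem~\ref{panyushev}(1)) gives $I\subseteq\max\mc I_\ab(\al_i)=M_i$. If $i<n$, then $\al_{i+1,n}$ has support $[i+1,n]$ avoiding $i$, hence $\al_{i+1,n}\notin M_i\supseteq I$, and applying Proposition~\ref{Ialfa} to the decomposition $\theta=\al_{1,i}+\al_{i+1,n}$ forces $\al_{1,i}\in I$; if $i=n$, then $\al_{1,i}=\theta\in I$ anyway. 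Symmetrically, when $i>1$ the decomposition $\theta=\al_{1,i-1}+\al_{i,n}$ together with $\al_{1,i-1}\notin M_i\supseteq I$ forces $\al_{i,n}\in I$, and when $i=1$ we have $\al_{i,n}=\theta\in I$. Hence $I\subseteq M_i$ and $\{\al_{1,i},\al_{i,n}\}\subseteq I$.

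For the converse, let $I$ be an abelian ideal with $I\subseteq M_i$ and $\al_{1,i},\al_{i,n}\in I$. I first verify the criterion of Proposition~\ref{Ialfa}. Fix $k\in[1,n-1]$. If $k<i$, then $\al_{1,k}$ has support $[1,k]$ avoiding $i$, so $\al_{1,k}\notin M_i\supseteq I$, whereas $\al_{k+1,n}\ge\al_{i,n}$, so $\al_{k+1,n}\in I$ because $I$ is a filter; thus exactly one of the two lies in $I$. If $k\ge i$, the symmetric argument (using $\al_{k+1,n}\notin M_i$ and $\al_{1,k}\ge\al_{1,i}\in I$) gives the same conclusion. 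By Proposition~\ref{Ialfa}, $I\in\mc I_\ab(\al_j)$ for some $j\in[n]$. To identify $j$, apply the forward implication, already proved, to $\al_j$: it gives $I\subseteq M_j$ and $\al_{1,j}\in I$. Then $\al_{1,j}\in I\subseteq M_i$ yields $i\in[1,j]$, i.e. $i\le j$, while $\al_{1,i}\in I\subseteq M_j$ yields $j\le i$; hence $j=i$ and $I\in\mc I_\ab(\al_i)$.

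The substantive ingredient is the inclusion $I\subseteq M_i$ used in the forward direction, which relies on the nontrivial fact (Remark~\ref{selecta}, via Theorem~\ref{panyushev}) that $M_i$ is the maximum of $\mc I_\ab(\al_i)$; once that is in hand the rest is elementary support bookkeeping. The point requiring a little care is the behaviour at the boundary indices $i\in\{1,n\}$, and the degenerate case $n=1$, where one of $\al_{1,i},\al_{i,n}$ collapses to $\theta$ and one decomposition of $\theta$ disappears; this is absorbed by the remark that every nonempty dual order ideal contains $\theta$. The converse direction needs no such input, since there the inclusion $I\subseteq M_i$ is already assumed.
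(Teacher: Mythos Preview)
Your proof is correct and follows essentially the same strategy as the paper: both use Remark~\ref{selecta} to obtain $I\subseteq M_i$ in the forward direction, and both reduce the statement to Proposition~\ref{Ialfa} via the explicit description of the decompositions $\theta=\al_{1,k}+\al_{k+1,n}$.

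The only notable difference is in the final step of the converse, where one must show that the simple root $\al_j$ produced by Proposition~\ref{Ialfa} is in fact $\al_i$. The paper appeals to Theorem~\ref{panyushev}(3): since ideals in $\mc I_\ab(\al_j)$ and $\mc I_\ab(\al_i)$ are incomparable for $j\neq i$, no ideal in $\mc I_\ab(\al_j)$ can sit inside $M_i=\max\mc I_\ab(\al_i)$. You instead bootstrap the forward implication (already proved) to get $I\subseteq M_j$ and $\al_{1,j}\in I$, and then read off $i\le j$ and $j\le i$ from the supports of $\al_{1,j}\in M_i$ and $\al_{1,i}\in M_j$. This is a pleasant, self-contained alternative that avoids invoking the incomparability result from Panyushev's theorem; the paper's route, on the other hand, makes the connection to the general poset structure of $\mc I_\ab$ more visible.
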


\begin{proof}
Let $I$ be an abelian ideal in $\mc I_\ab(\al_i)$.
As noted in Remark \ref{selecta}, $M_i$ is the maximum of 
 $\mc I_\ab(\al_i)$, hence $I\subseteq M_i$. 
\par
Now, we prove that, for all abelian ideals $I$ contained in $M_i$, $w_I^{-1}(-\theta+2\de)\in \Pi_\ell $ if and only if $\{\al_{1,i},\al_{i,n}\}\subseteq I$. 
For all $\be, \ga\in \Phi^+$,  $\be+\ga=\theta$ if
and only if there exists $j\in [n-1]$ such that $\be=\al_{1, j}$ and
$\ga=\al_{j+1, n}$, or vice versa. Since  $\al_{1, j}\geq \al_{1,
i}$ for $i\leq j$ and $\al_{j+1, n}\geq \al_{i, n}$ if $i>j$, it follows
that if $\{\al_{1,i},\al_{i, n}\}\subseteq I$, then  for all $\be, \ga\in
\Phi^+$ such that  $\be+\ga=\theta$, exactly one of $\be$, $\ga$
belongs to $I$. Conversely, if $\{\al_{1,i}, \al_{i, n}\}\not\subseteq I$,
there exists at least one decomposition of $\theta$ as a sum of positive roots
$\be$ and $\ga$ with $\be\not\in I$ and $\ga\not\in I$. Hence, the claim follows from Proposition \ref{Ialfa}.
\par
Thus, we have proved the \lq\lq only if\rq\rq\ part.  Moreover, we have proved that if
$I\subseteq M_i$ and $\{\al_{1,i},\al_{i,n}\}\subseteq I$, then $I\in
\mc I_\ab(\al)$ for some $\alpha\in \Pi_\ell$. 
By Proposition \ref{panyushev}, this forces $I\in\mc I_\ab(\al_i)$ since, if $j\neq i$, the  elements in  $\mc I_\ab(\al_j)$ and those in $\mc I_\ab(\al_i)$ are pairwise incomparable and  hence no ideal in $\mc I_\ab(\al_j)$ can be contained in $M_i$. 
\end{proof}

\bigskip

For all $\al, \be\in \Phi^+$, by a {\it path from $\al$ to $\be$} we mean a sequence $(\al=\be_1, \be_2, \dots,$ $\be_k=\be)$ such that, for $i=1, \dots, k-1$, $\be_i$ covers, or is covered by, $\be_{i+1}$ in the root poset, i.e. $\be_i-\be_{i+1}\in \pm \Pi$. 
For convenience sake, we will sometimes identify a path with its underlying set.
A {\it minimal path from $\al$ to $\be$} is a path of minimal length among all paths from $\al$ to $\be$.

$$
\begin{tikzpicture}
 \fill[gray!20](0,0) rectangle (2,-2.5);
 \fill[gray!80](0,-2) rectangle (.5,-2.5);  
 \fill[gray!80](.5,-1) rectangle (1,-2.5);
 \fill[gray!80](1,0) rectangle (1.5,-1.5); 
 \fill[gray!80](1.5,0) rectangle (2,-.5);
 \draw{(0,0)--(4,0) (0,0)--(0,-4)};
 \foreach \x in {0.5, 1, 1.5, 2, 2.5, 3, 3.5, 4}
 \draw{(0,-\x)--(4.5-\x,-\x) (\x,0)--(\x,-4.5+\x)};
\pgftext[base,left, x=2.5cm, y=-2.5cm] {a minimal path from 
$\al_{1,4}$ to $\al_{4,8}$ in type $A_8$};
\end{tikzpicture}
$$
The minimal paths from $\al_{1,i}$ to $\al_{i,n}$ are in natural bijection with the
abelian ideals that contain both $\al_{1,i}$ and $\al_{i,n}$, i.e. with the
elements in  
$\mc I_\ab(\al_i)$.
The bijection associates to each minimal path $B$ the dual order ideal
$I(B)$ that  it generates 
in $\Phi^+$, i.e.
$$
I (B):=\{\al\in \Phi^+\mid \al\geq \be\ \text{for some}\
\be\in B\}
$$
and, conversely, to each abelian ideal $I $ containing  $\al_{1,i}$
and $\al_{i,n}$ we associate its {\it border} 
$$
B(I )=\{\al_{s,t}\in I \mid \al_{s+1,t-1}\notin I \}.
$$ 
This is the specialization to $A_n$ of the definition  given in (\ref{bordo}).
\par

It is clear that, for all $i\in [n]$,  any minimal path $B$ from $\al_{1,i}$ to 
$\al_{i,n}$ contains $n$ roots.  Moreover, the roots  in $B$ are linearly
independent, since the set of differences between two adjacent roots is
$\Pi\setminus\{\al_i\}$, and any root in $B$ has coefficient $1$ in
$\al_i$.  Thus, the roots in $B$ are a basis of $\gen\, \Phi$. Since $\Phi$ is
of type $A_n$, this implies that they are  a $\ganz$-basis of the root lattice.
This is a well known fact; we prove it here for completeness.

\begin{pro}
\label{unimodulare}
 Every vector basis of $\gen\, \Phi$ contained in $\Phi$ is a basis
of the root lattice.
\end{pro}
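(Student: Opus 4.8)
The plan is to pass to the standard coordinate realization of type $A_n$ and reduce the statement to an elementary fact about spanning trees. Realize $\Phi$ inside $\real^{n+1}$ as $\{e_i-e_j\mid 1\le i\ne j\le n+1\}$, lying in $V=\{x\in\real^{n+1}\mid \sum_k x_k=0\}$, with $\al_i=e_i-e_{i+1}$; then $\gen\,\Phi=V$ and the root lattice is $Q=\{x\in\ganz^{n+1}\mid \sum_k x_k=0\}$. To a root $\be=e_i-e_j$ associate the edge $\{i,j\}$ of the complete graph on the vertex set $[n+1]$. Given a vector basis $\{\be_1,\dots,\be_n\}$ of $V$ with all $\be_k\in\Phi$, I would first check that the associated $n$ edges form a spanning tree $T$ of $[n+1]$: if they contained a cycle $u_1,u_2,\dots,u_m,u_1$, then telescoping gives $\sum_t(e_{u_t}-e_{u_{t+1}})=0$ (indices cyclically), i.e.\ a nontrivial linear relation among the $\be_k$ up to signs, contradicting independence; hence the $n$ edges form a forest, and a forest with $n$ edges must span exactly $n+1$ vertices in a single component, i.e.\ it is a spanning tree.

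Next I would prove by induction on $n$ that the edge vectors of any spanning tree of $[n+1]$ form a $\ganz$-basis of $Q$. The case $n=1$ is clear. For the inductive step, $T$ has a leaf $v$ incident to a unique edge, which (after relabeling) is $\be_n=\pm(e_v-e_w)$; deleting $v$ gives a spanning tree $T'$ of $[n+1]\setminus\{v\}$ whose edge vectors are $\be_1,\dots,\be_{n-1}$, and these by induction form a $\ganz$-basis of $Q'=\{x\in Q\mid x_v=0\}$ (the root lattice of the $A_{n-1}$ subsystem on the coordinates $[n+1]\setminus\{v\}$). For any $x\in Q$, subtracting a suitable integer multiple of $\be_n$ yields a vector of $Q$ with vanishing $v$-coordinate, hence an element of $Q'$; expanding it in $\be_1,\dots,\be_{n-1}$ shows $x\in\ganz\be_1+\cdots+\ganz\be_n$. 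Thus $\{\be_1,\dots,\be_n\}$ generates $Q$ over $\ganz$, and since it is already a vector basis of $V$ it is a $\ganz$-basis of $Q$, which is the assertion.

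I do not expect a serious obstacle; the only delicate points are the passage from a cycle in the auxiliary graph to a genuine linear dependence among the $\be_k$ (together with the sign bookkeeping) and the elementary count that a forest with $n$ edges has $n+1$ vertices. An alternative, essentially equivalent argument avoids graphs: with respect to $\Pi$, each $\be_k$ is $\pm$ of an interval $\sum_{h=i}^{j}\al_h$, so the matrix expressing $\be_1,\dots,\be_n$ in terms of $\Pi$ has the consecutive-ones property, hence is totally unimodular; an invertible totally unimodular integer matrix has determinant $\pm1$, which gives the claim at once. I would present the spanning-tree argument as the main proof, as it is short and self-contained.
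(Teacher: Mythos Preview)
Your argument is correct. Both proofs proceed by induction on $n$, but the mechanisms differ. The paper stays inside the root system: it fixes the simple root $\al_n$, notes that the positive roots with $c_n(\be)\neq 0$ form a chain in the root poset of type $A_n$, and subtracts the smallest of these from the others to reduce to a basis with exactly one root involving $\al_n$; then induction on the $A_{n-1}$ subsystem $\Phi(\Pi\setminus\{\al_n\})$ finishes. Your proof instead passes to the coordinate model, interprets the basis as an edge set, and lets the graph structure do the work: linear independence forces a spanning tree, and the existence of a leaf gives for free a vertex occurring in exactly one $\be_k$, avoiding the basis-modification step entirely. Your alternative via the consecutive-ones property and total unimodularity is a third route. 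The paper's proof has the virtue of never leaving the intrinsic root-system language; yours makes the combinatorial content (spanning trees of $K_{n+1}$) explicit and is arguably more conceptual, at the cost of invoking the coordinate realization.
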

\begin{proof}
We proceed by induction on $n$, the case $n=1$ being trivial.
Let $n >1$ and $\be_1, \ldots \be_n$ be linearly independent vectors in
$\Phi$: we may clearly assume that they are in $\Phi^+$. Suppose that there is a
unique $i\in [n]$ such that $\be_i > \al_n$ (i.e. $c_n(\be_i) \neq 0$):
then, by induction hypothesis, $\{\be_1, \ldots, \be_n\} \setminus
\{\be_i\}$ gives a basis of the lattice generated by $\Pi \setminus
\{\al_n\}$. Since $c_n(\be_i)=1$, we get the assertion.
\par

Suppose now that the vectors of the
basis which are greater than $\al_n$ are $\be_{i_1}, \ldots
\be_{i_s}$, with $s >1$. Since $\Phi$ is of type $A_n$,
these roots form a chain in the root poset: we assume that
$\be_{i_1}> \be_{i_2} >  \cdots >
\be_{i_s}$. Then $$\{\be_t \mid t \in [n]\setminus\{ i_1, \ldots, i_{s-1} \}\}
\cup \{\be_{t} - \be_{i_s} \mid t \in \{i_1, \ldots, i_{s-1} \} \}
$$ is a
vector basis contained in $\Phi^+$, generating the same lattice as $\{\be_1, \ldots \be_n\}$, but with a  unique element
greater than
$\al_n$. Hence we may conclude applying the previous argument.
\end{proof}

The set $\mc I_\ab(\al_i)$ parametrizes a triangulation of $F_i$.

\begin{pro}\label{tri-i}
Let $i \in [n]$. The set 
$$
\mc T'_i=\{\conv(B(I))\mid I\in \mc I_\ab(\al_i)\}
$$
is a triangulation $F_i$. Two simplices are adjacent
in  $\mc T'_i$ if and only if the corresponding abelian ideals differ in only 
one element.
\par

In particular, for every triangulation of the standard parabolic facet $F_i$ whose vertices are the roots in $F_i$,  the number of simplices equals the cardinality of $\mc I_\ab(\al_i)$. 
\end{pro}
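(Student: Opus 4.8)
The plan is to realise $F_i$ explicitly as a product of two simplices and to identify $\mc T'_i$ with the classical ``staircase'' triangulation of such a product; the counting statement then drops out from unimodularity. Setting $\al_{a,b}:=\un 0$ whenever $a>b$, every root of $M_i$ can be written as $\al_{s,t}=\al_{1,i}+u_s+v_t$, where $u_s:=-\al_{1,s-1}$ for $1\le s\le i$ and $v_t:=\al_{i+1,t}$ for $i\le t\le n$; thus $u_1=v_i=\un 0$, the consecutive differences $u_{s+1}-u_s$ are $-\al_1,\dots,-\al_{i-1}$, and the consecutive differences $v_{t+1}-v_t$ are $\al_{i+1},\dots,\al_n$. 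Hence $\{u_1,\dots,u_i\}$ and $\{v_i,\dots,v_n\}$ are affinely independent and lie in the complementary subspaces $\gen\,(\al_1,\dots,\al_{i-1})$ and $\gen\,(\al_{i+1},\dots,\al_n)$ of $\gen\,\Phi$. Since for any two finite point sets $\conv\{p_s+q_t\}=\conv\{p_s\}+\conv\{q_t\}$, it follows that $F_i=\al_{1,i}+\Delta_U+\Delta_V$, with $\Delta_U:=\conv\{u_1,\dots,u_i\}$ an $(i-1)$-simplex and $\Delta_V:=\conv\{v_i,\dots,v_n\}$ an $(n-i)$-simplex, and, the two summands spanning complementary subspaces, $\Delta_U+\Delta_V$ is affinely isomorphic to the product $\Delta_U\times\Delta_V$; the roots of $M_i$ correspond bijectively to the vertices $u_s+v_t$.

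Next I would transport the combinatorics of $\mc I_\ab(\al_i)$ through this isomorphism. By the bijection recalled before the statement, an ideal $I\in\mc I_\ab(\al_i)$ corresponds to a minimal path from $\al_{1,i}$ to $\al_{i,n}$, which in the index coordinates $(s,t)$ is a monotone lattice path from $(1,i)$ to $(i,n)$, and $B(I)$ is exactly the set of roots sitting at its lattice points. Thus $\conv(B(I))$ is, up to the translation by $\al_{1,i}$, precisely the maximal cell of the staircase triangulation of $\Delta_U\times\Delta_V$ attached to that path, once $u_1,\dots,u_i$ and $v_i,\dots,v_n$ are identified with the vertices $0,\dots,i-1$ and $0,\dots,n-i$ of the two factors. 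Since it is classical that these cells form a triangulation of a product of two simplices (for the case at hand this is the triangulation described combinatorially in \cite{ABHPS}), $\mc T'_i$ is a triangulation of $F_i$. For the adjacency statement: two maximal cells of the staircase triangulation meet in a common codimension-one face if and only if the two paths differ by a single elbow flip around a unit square, and a direct inspection shows that such a flip changes the filter generated by the path by exactly one root --- the root at the re-entrant corner created by the flip --- while conversely adding or removing a single root from an ideal of $\mc I_\ab(\al_i)$ within $\mc I_\ab(\al_i)$ forces such a flip.

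For the final assertion, let $\mc S$ be any triangulation of $F_i$ whose vertices are roots lying on $F_i$. Recall that the roots of $M_i$ are precisely the vertices of $F_i$, so the vertex set of $\mc S$ is exactly $M_i$; moreover $\un 0\notin\gen\,F_i$, so coning from $\un 0$ turns $\mc S$ into a triangulation of $\conv_0(F_i)$, whence $\sum_{\sigma\in\mc S}\vol(\conv_0(\sigma))=\vol(\conv_0(F_i))$. For each maximal simplex $\sigma=\conv\{v_0,\dots,v_{n-1}\}$ of $\mc S$, the $v_j$ are roots forming a vector basis of $\gen\,\Phi$ (they are affinely independent and $\gen\,F_i$ is a hyperplane missing the origin), hence a $\ganz$-basis of the root lattice by Proposition \ref{unimodulare}; therefore $\conv_0(\sigma)$ is a unimodular $n$-simplex and $\vol(\conv_0(\sigma))$ equals $\tfrac1{n!}$ times the covolume of the root lattice, a constant. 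Consequently $|\mc S|=\vol(\conv_0(F_i))\big/\bigl(\tfrac1{n!}\,\vol(\text{fundamental domain of the root lattice})\bigr)$ is independent of $\mc S$, and comparing with $\mc S=\mc T'_i$ gives $|\mc S|=|\mc I_\ab(\al_i)|$.

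The step requiring real care is the middle one: checking that the ``border'' description of $B(I)$ genuinely matches the path that labels the cells of the staircase triangulation, which is a matter of keeping the Young-diagram positions and the root indices $(s,t)$ rigorously apart; everything else is bookkeeping once the product structure of $F_i$ is in hand. If one prefers a self-contained argument avoiding the cited triangulation of a product of simplices, it can be produced directly: a generic point of $\Delta_U\times\Delta_V$ has all the partial sums of its two barycentric coordinate vectors distinct, and their common refinement is a shuffle that singles out a unique open cell containing the point, while on the walls some of these partial sums coincide; this simultaneously yields that the $\conv(B(I))$ cover $F_i$, that their relative interiors are pairwise disjoint, and that any two of them meet along a common face.
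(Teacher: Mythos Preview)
Your proof is correct and follows essentially the same route as the paper: both identify $F_i$ with a product of two simplices via the rectangular coordinates on $M_i$, recognise the borders $B(I)$ as the monotone lattice paths giving the staircase triangulation, and derive the counting statement from Proposition~\ref{unimodulare}. The paper's version is terser (it cites the staircase triangulation of \cite{D-R-S} rather than constructing the affine isomorphism explicitly), while you spell out the map $\al_{s,t}\mapsto(u_s,v_t)$ and even sketch a self-contained shuffle argument; your remark that the vertex set of an arbitrary $\mc S$ ``is exactly $M_i$'' is a harmless overstatement, since your volume computation only uses that the vertices of each maximal simplex are roots.
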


\begin{proof}
By Proposition \ref{unimodulare}, all triangulations whose vertices are the roots have the same number of
simplices. Moreover, as already noted, every abelian ideal
in $\mc I_\ab(\al_i)$ is uniquely determined by its border. Hence the
first statement implies the second one.
\par

To prove the first statement, recall that the standard parabolic facet $F_i$ is
congruent to the product of
two simplices and that the borders of the ideals we  are
considering coincide with the minimal paths from 
$\al_{1,i}$  to $\al_{i,n}$ in the rectangle $M_i$ corresponding to $F_i$.
The triangulation $\mc T'_i$ is
the so called ``staircase triangulation'' of the product of two simplices 
(see \cite{D-R-S}, \S 6.2.3) and satisfies the required property.
\end{proof}

The stabilizer in $W$ of the face $F_i$ of $\mc P$ is the parabolic
subgroup $W \la \Phi \setminus \al_i \ra$. Let $W^i$ be  
the set of the minimal length representatives of its left cosets, which
corresponds to the orbit of $F_i$
under $W$.
Through the action of the elements in $W^i$, for all $i\in [n]$, we can
induce, from the triangulation of  the standard parabolic facets $F_i$, a triangulation of the whole  $\mc P$. Hence we obtain the following result.

\begin{thm}
\label{tri}
For all $i \in [n]$, let 
$$
\mc T_i=\{\conv_0(B(I)) \mid I\in \mc I_\ab(\al_i)\}\quad
\text{and}
\quad\mc T=\bigcup_{i\in [n]} W^i\mc T_i. 
$$
Then $\mc T$ is a triangulation of $\mc P$. 
\end{thm}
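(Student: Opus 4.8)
The plan is to build the triangulation $\mc T$ of $\mc P$ by gluing together the triangulations $W^i \mc T_i$ of the orbits $W^i F_i$ of the standard parabolic facets, coned off at $\un 0$, and checking that the pieces match along their shared boundaries. First I would invoke Theorem \ref{tricomune}, already available at this point in the paper, which gives that for each long simple root $\al_i$ (in type $A_n$ all simple roots are long), the set $\{\conv(B(I))\mid I\in \mc I_\ab(\al_i)\}$ is a triangulation of the facet $F_i$ whose simplices are genuine $(n-1)$-simplices: by Proposition \ref{unimodulare} and the fact that $\dim(\gen\, I)=n$ for $I\in \mc I_\ab(\al_i)$, each $B(I)$ is a $\ganz$-basis of the root lattice, so $\conv(B(I))$ is a unimodular $(n-1)$-simplex on the affine hyperplane $\{(x,\breve\omega_i)=1\}$ (which does not pass through $\un 0$). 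Coning each such simplex at $\un 0$ produces a full-dimensional $n$-simplex $\conv_0(B(I))$, and these cover exactly the cone on $F_i$ intersected with $\{(x,\breve\omega_i)\le 1\}$, i.e. a ``pyramid'' over $F_i$ with apex $\un 0$. So $\mc T_i$ triangulates that pyramid, and $W^i\mc T_i$ triangulates the union of pyramids over the facets in $W F_i$.

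Next I would check that $\bigcup_{i\in[n]} W^i F_i$ is the entire boundary of $\mc P$. By Theorem \ref{CM}, the facets of $\mc P$ fall into $n$ orbits under $W$, with representatives $F_1,\dots,F_n$, and $W^i$ is precisely a set of coset representatives for $W/\stab_W(F_i) = W/W\la\Pi\setminus\{\al_i\}\ra$; so $W^i F_i$ lists each facet in the orbit of $F_i$ exactly once, and the union over $i$ gives every facet exactly once. Consequently the cone over each facet is a union of pyramids-with-apex-$\un 0$, and since $\un 0$ lies in the interior of $\mc P$ (being the centroid of $\Phi$), these pyramids tile $\mc P$ with disjoint interiors. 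Therefore $\mc T = \bigcup_{i\in[n]} W^i\mc T_i$ is a set of $n$-simplices whose union is $\mc P$ and whose interiors are pairwise disjoint.

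The main point that still needs care — and I expect this to be the principal obstacle — is that $\mc T$ is a genuine simplicial complex, i.e. that any two of its simplices meet along a common face. Within a single pyramid over $F_i$ this is guaranteed by Theorem \ref{tricomune}(3); the subtlety is matching across two adjacent pyramids, over facets $F$ and $F'$ sharing a codimension-$2$ face $G = F\cap F'$. Here the key is that the induced triangulations of $F$ and $F'$ must restrict to the \emph{same} triangulation of $G$. This is where the choice of \emph{minimal length} coset representatives $W^i$ is essential (as emphasized right after the statement of Theorem \ref{indottagen}): using the recursive structure of $\mc P$ in type $A$ — the faces of $F_i$ of dimension $<n-1$ are the faces of the $A_k$-analogues of $\mc P$, as described in Section \ref{ac-si} — and Proposition \ref{dispone} (which says elements of $W^i$ preserve the order on $M_i$, hence carry minimal paths to minimal paths), one shows that the staircase triangulation on a shared subface $G$ is independent of which side it is viewed from. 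Concretely, I would argue that if $wF_i$ and $w'F_j$ share a codimension-$2$ face, then after passing to representatives of the relevant parabolic quotients the two induced triangulations of $G$ are both the staircase triangulation of the same product of simplices, hence coincide; coning at $\un 0$ then preserves the match. Combining this boundary-matching with the covering and interior-disjointness established above yields that $\mc T$ is a triangulation of $\mc P$, which is the assertion of Theorem \ref{tri}.
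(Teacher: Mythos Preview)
Your outline matches the paper's argument: Theorem~\ref{tri} is presented there as an immediate consequence of Proposition~\ref{tri-i} (the staircase triangulation of each standard parabolic facet $F_i$) together with the $W$-action and coning at~$\un 0$; the paper offers only the one-sentence justification preceding the statement and does not spell out the boundary-matching across adjacent facets that you flag as the principal obstacle. Your proposal is thus more careful than the paper on exactly this point, and your sketch via the recursive structure of the staircase triangulation is a reasonable way to fill that gap.

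One small misreading: the remark after Theorem~\ref{indottagen} says the minimal length representatives are essential for the \emph{restriction to $\mc P^+$} (Theorem~\ref{indottagen}), not for obtaining a triangulation of $\mc P$. In fact the paper asserts, in Section~\ref{idealiabeliani} just before defining $\mc T_i$, that \emph{any} system $\mc R_i$ of coset representatives already yields a triangulation $\bigcup_i \mc R_i\mc T_i$ of $\mc P$. So your attribution of where minimality enters should be adjusted accordingly.
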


\bigskip

The triangulation $\mc T$ of $\mc P$ is parametrized by the set 
$$
\{(i, w, T) \mid i\in[n], w \in W^i, T\in \mathcal T_i\}.
$$
In particular, as already noted in \cite{ABHPS}, $\mc T$  has
$\sum_{i=1}^{n}|W^i||\mathcal T_i|= \sum_{i=1}^{n} 
\binom{n+1}{i}\binom{n-1}{n-i}= (n+1)C_n$
simplices, where $C_n=\frac{1}{n+1}\binom{2n}{n}$ is the $n$-th Catalan number.
\par

Actually, the triangulation $\mathcal T$ induces a triangulation  of the
positive root polytope $\mathcal P^+$.

\begin{thm}
\label{indotta}
Let
$\mathcal C^+$ denote the positive cone generated by $\Pi$. Then $\mathcal P^+=\mathcal
C^+\cap
\mathcal P$
and the triangulation ${\mathcal T}$  of $\mathcal P$ restricts to a
triangulation of $\mathcal P^+$.
\end{thm}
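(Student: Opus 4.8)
The plan is to prove both halves of the statement simultaneously by showing that the $n$ coordinate hyperplanes $H_j:=\{x\mid c_j(x)=0\}=\gen\,(\Pi\setminus\{\al_j\})$, whose nonnegative sides cut out $\mc C^+$, are each a union of faces of the triangulation $\mc T$; everything else is then formal. First, $\mc P^+=\conv_0(\Phi^+)\subseteq\mc P\cap\mc C^+$ is immediate, since every positive root and $\un 0$ lies in both $\mc P$ and $\mc C^+$. Next recall that $\mc T=\bigcup_{i\in[n]}W^i\mc T_i$, so every simplex of $\mc T$ has the form $w\conv_0(S)$ or $w\conv(S)$ with $w\in W^i$, $I\in\mc I_\ab(\al_i)$, $S\subseteq B(I)$, and its vertices are $\un 0$ together with the roots $w(\be)$, $\be\in S$.

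The heart of the argument is the following claim: for all $i\in[n]$, $w\in W^i$, $I\in\mc I_\ab(\al_i)$ and $j\in[n]$, the integers $c_j(w(\be))$, $\be\in B(I)$, are either all $\ge0$ or all $\le0$. Granting this, let $\sigma=w\conv_0(B(I))$ be a maximal simplex of $\mc T$; since $w(B(I))$ spans $\gen\,\Phi$ (the roots of $B(I)$ are a vector-space basis, as noted before Proposition~\ref{unimodulare}), $c_j$ is not identically $0$ on the vertex set of $\sigma$, so $\sigma$ lies in one closed half-space bounded by $H_j$ and $H_j\cap\sigma$ is a face of $\sigma$. Hence each $H_j$ is a union of faces of $\mc T$, so on the relative interior of every simplex of $\mc T$ each $c_j$ has a constant sign; equivalently, each such relative interior lies in a single closed chamber of the arrangement $\{H_1,\dots,H_n\}$. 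Since $\mc C^+=\bigcap_j\{c_j\ge0\}$ and $\mc P=\bigsqcup_{T\in\mc T}\mathrm{relint}(T)$, it follows that $\mc P\cap\mc C^+=\bigcup\{T\in\mc T\mid T\subseteq\mc C^+\}$. Finally, if $T\in\mc T$ and $T\subseteq\mc C^+$, then all vertices of $T$ lie in $\mc C^+$, and being either $\un 0$ or roots they lie in $\Phi^+\cup\{\un 0\}$, so $T\subseteq\conv_0(\Phi^+)=\mc P^+$. Thus $\mc P\cap\mc C^+=\bigcup\{T\in\mc T\mid T\subseteq\mc C^+\}\subseteq\mc P^+\subseteq\mc P\cap\mc C^+$, which gives $\mc P^+=\mc C^+\cap\mc P$ and exhibits $\{T\in\mc T\mid T\subseteq\mc P^+\}$ as a subcomplex of $\mc T$ whose union is $\mc P^+$, i.e. as a triangulation of $\mc P^+$.

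It remains to prove the claim, and this is the step I expect to be the real obstacle; it is also where type $A$ and the choice of the minimal-length representatives $W^i$ enter essentially. I would work in the standard model, writing roots as $\varepsilon_a-\varepsilon_b$, where $c_j(\varepsilon_a-\varepsilon_b)\in\{-1,0,1\}$ records whether the index $j$ ``separates'' $a$ from $b$; an element $w\in W^i$ is encoded by the $i$-subset $w(\{1,\dots,i\})\subseteq[1,n+1]$, with $w$ increasing on $[1,i]$ and on $[i+1,n+1]$, while $B(I)$ is encoded by a monotone lattice path from $(1,i{+}1)$ to $(i,n{+}1)$ carrying the roots $\varepsilon_{s}-\varepsilon_{c}$ at its lattice points $(s,c)$. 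A short computation then shows that, for fixed $j$, the lattice points whose root has $c_j>0$ form a region of the shape ``first coordinate small, second coordinate large'', and those whose root has $c_j<0$ form a region ``first coordinate large, second coordinate small'', with every point of the first region strictly ``north-west'' of every point of the second; since a monotone path is weakly increasing in both coordinates, it meets at most one of the two regions, which is exactly the claim. (For a non-minimal coset representative these two monotonicity properties of $w$ fail and $\mc T$ genuinely straddles some $H_j$, consistently with the remark after Theorem~\ref{indottagen}.)
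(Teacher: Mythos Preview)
Your proof is correct and follows the same overall architecture as the paper's: reduce everything to the claim that for each $j\in[n]$, $w\in W^i$, and $I\in\mc I_\ab(\al_i)$, the coordinates $c_j(w(\be))$, $\be\in B(I)$, all have the same sign; then deduce both conclusions formally. The paper phrases the reduction slightly differently (showing that each $w(T)$ either lies in $\mc C^+$ or meets it in a null set), but the content is identical.

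Where you and the paper diverge is in the proof of the claim itself. The paper argues intrinsically: given $\gamma,\gamma'\in B(I)$ with $(w(\gamma),\breve\omega_t)<0$, it takes a common upper bound $\gamma''\in M_i$ of $\gamma$ and $\gamma'$ (which exists because $M_i$ is a rectangle in the root poset), uses Proposition~\ref{dispone} to get $w(\gamma')\leq w(\gamma'')$, and then checks $(w(\gamma''),\breve\omega_t)\leq 0$ from $(w(\gamma''-\gamma),\breve\omega_t)\leq m_t=1$. You instead pass to the $\varepsilon$-coordinate model, encode $w\in W^i$ as a shuffle and $B(I)$ as a monotone lattice path, and observe that the $c_j>0$ and $c_j<0$ loci are rectangles of the form $[1,p]\times[q{+}1,n{+}1]$ and $[p{+}1,i]\times[i{+}1,q]$, which no monotone path can both meet. (Your compass label ``north-west'' is off---the two rectangles are separated in the anti-diagonal direction---but the conclusion that a monotone path meets at most one of them is exactly right.) Your argument is more elementary and makes the role of the shuffle condition on $W^i$ completely transparent; the paper's argument has the advantage that its ingredients (Proposition~\ref{dispone} and bounds via $m_t$) are what get recycled, with Lemma~\ref{sopra-destra} replacing the simple $m_t=1$ bound, in the type~$C$ proof of Theorem~\ref{indotta_c}.
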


\begin{proof}
We shall prove the following two statements, which give the result:
\begin{enumerate}
 \item the triangulation ${\mathcal T}$  restricts
to a
triangulation of $\mathcal C^+\cap
\mathcal P$,
\item $\mathcal
P^+=\mathcal C^+\cap
\mathcal P$.
\end{enumerate}
The first assertion is equivalent to requiring that every simplex of the triangulation $\mathcal T$ is either contained in $\mathcal C^+$, or intersects it in a null set. 
Hence, we must show that, for any  $i \in [n]$, $w\in W^i$, and $T\in \mathcal T_i$, either $w(T)\subseteq \mathcal C^+$, or $w(T)\cap \mathcal C^+$ has volume equal to $0$.
\par

Let $i \in [n]$, $w\in W^i$, $T\in\mathcal T_i$, $T=\conv_0 (B(I))$  with $I$
abelian ideal in $M_i$, such that $w(T)\not\subseteq \mc C^+$. Then,
$w(B(I))\not\subseteq \Phi^+$, hence there exists $\gamma\in B(I)$ and $t \in
[n]$ such that $(w(\gamma), \breve\omega_t)<0$. We need to prove that, for all
$\gamma'\in B(I)$, we have that $(w (\gamma') , \breve\omega_t)\leq 0$ (so that
the hyperplane $\breve\omega_t^{\perp}$ separates $w(T)$ and $\mathcal C^+$).

By definition of $B(I)$, 
 there exists $\gamma''\in M_i$ such that $\gamma''-\gamma\in \Phi^+\cup \{\un 0\}$ and $\gamma''-\gamma'\in \Phi^+\cup \{\un 0\}$.   
By Proposition \ref{dispone}, it suffices to prove that  $(w (\gamma'') , \breve\omega_t)\leq 0$. If $\gamma'' = \gamma$, we are done. Otherwise, since both $\gamma$ and $\gamma''$ are in $M_i$, $\gamma'' - \gamma$ is a positive root not in $ M_i$ and hence $w (\gamma'' - \gamma) \in \Phi^+$. It follows that $(w (\gamma'') , \breve\omega_t) = (w (\gamma''- \gamma) , \breve\omega_t) + (w (\gamma) , \breve\omega_t) =  (w (\gamma''- \gamma) , \breve\omega_t) - 1 \leq  0$, since $m_t = 1 $ for all $t\in [n]$ and thus $|(\alpha , \breve\omega_t)| \leq 1 $ for all $\alpha \in \Phi$.

\par

It remains to prove the second assertion. 
The inclusion $\mathcal P^+\subseteq\mathcal C^+\cap \mathcal P$ is obvious, since the origin and positive roots are contained in both the convex sets $\mathcal C^+$ and $\mathcal P$. We have to prove the reverse inclusion.
By the first statement, $\mathcal C^+\cap \mathcal P$ is union of simplices in ${\mathcal T}$. 
Since $ \mathcal C^+ \cap \Phi^- = \emptyset$, the vertices of such simplices are in $\Phi^+ \cup \{0\}$, and hence in $\mc P^+$.
\end{proof} 

As a corollary, we obtain the fact that   
the triangulation $\mc T'_i$ of Proposition \ref{tri-i} inherits the poset structure of $\mc I_\ab(\al_i)$. 

\begin{cor}
Let $I\in \mc I_\ab(\al_i)$ be an abelian ideal. Then the set 
$$
\mc T'_I=\{\conv(B(J))\mid J\in \mc I_\ab(\al_i),\ J\subseteq I\}
$$ 
is a triangulation of $\conv(I)$. 
\end{cor}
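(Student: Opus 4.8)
The plan is to prove this exactly as a relativized version of Theorem~\ref{indotta}. First, since $I\subseteq M_i$ and every $\be\in M_i$ satisfies $(\be,\breve\omega_i)=m_i=1$, the polytope $\conv(I)$ lies in $F_i$, and by Proposition~\ref{tri-i} the set $\mc T'_i=\{\conv(B(J))\mid J\in\mc I_\ab(\al_i)\}$ is a triangulation of $F_i$. Each $\conv(B(J))$ is an $(n-1)$-simplex (its vertex set $B(J)$ is an integral basis, hence affinely independent), and $\conv(I)$ is $(n-1)$-dimensional because it contains $\conv(B(I))$; so $\mc T'_I$ is a subcomplex of $\mc T'_i$ all of whose maximal simplices are $(n-1)$-dimensional, and proving that it triangulates $\conv(I)$ reduces to the set identity
\[
\conv(I)=\bigcup_{J\in\mc I_\ab(\al_i),\ J\subseteq I}\conv(B(J)),
\]
whose inclusion $\supseteq$ is immediate since $B(J)\subseteq J\subseteq I$ when $J\subseteq I$.

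For the reverse inclusion I would pass to cones. Using $m_t=1$ for all $t$, one checks that $\conv_0(I)=\cone(I)\cap\mc P_\Phi$: if $x=\sum_{\be\in I}c_\be\be$ with $c_\be\ge 0$, then $(x,\breve\omega_i)=\sum_\be c_\be$, so $x\in\mc P_\Phi$ forces $\sum_\be c_\be\le 1$, whence $x$ is a multiple, by a factor in $[0,1]$, of a convex combination of roots of $I$. It therefore suffices to show that the triangulation $\mc T_i=\{\conv_0(B(J))\}$ of $\conv_0(F_i)$ restricts to $\cone(I)\cap\mc P_\Phi$, i.e.\ that each $n$-simplex $\conv_0(B(J))$ either lies in $\cone(I)$ or meets it in a set of measure zero. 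A root of $M_i$ lies in $\cone(I)$ if and only if it lies in $I$ (it is a vertex of $F_i$, hence would be a vertex of $\conv(I)$), so $\conv_0(B(J))\subseteq\cone(I)$ exactly when $B(J)\subseteq I$, which, $I$ being a dual order ideal, holds exactly when $J\subseteq I$. If $J\not\subseteq I$, choose a \emph{minimal} root $\ga\in B(J)\setminus I$ and let $\xi_\ga$ be the vector of the basis of $\gen\,\Phi$ dual to $B(J)$; then $\conv_0(B(J))$ lies in the half-space $(\,\cdot\,,\xi_\ga)\ge 0$ and $\xi_\ga^{\perp}$ supports the facet of $\conv_0(B(J))$ opposite $\ga$. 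The crux is the inequality $(\eta,\xi_\ga)\le 0$ for every $\eta\in I$: granting it, $\cone(I)$ lies in $(\,\cdot\,,\xi_\ga)\le 0$, so $\conv_0(B(J))\cap\cone(I)$ is contained in the $(n-1)$-dimensional facet of $\conv_0(B(J))$ opposite $\ga$ and thus has measure zero, which finishes the argument.

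The displayed inequality is the only substantial point and I expect it to be the main obstacle. To establish it I would use the Young-diagram formalism of this section: writing $B(J)$ as the monotone lattice path from $\al_{1,i}$ to $\al_{i,n}$ in $M_i$ and working in the realization $\Phi\subseteq\{\sum\varepsilon_k=0\}$, the dual vector $\xi_\ga$ is (up to normalization) a signed sum over a contiguous block of the $\varepsilon_k$ determined by the position of $\ga$ along the path, and then $(\eta,\xi_\ga)\le 0$ for $\eta\in I$ follows from the fact that $\ga\notin I$ forces every root $\le\ga$ to lie outside the dual order ideal $I$, combined with a monotonicity argument along the path in the spirit of Proposition~\ref{dispone}; the minimality of $\ga$ is essential here, as a non-minimal element of $B(J)\setminus I$ need not produce a separating facet. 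A coordinate-free alternative for the set identity is induction on $|I|$: the base case $I=\min\mc I_\ab(\al_i)$ is a single simplex, since that ideal (the union of the upward closures of $\al_{1,i}$ and $\al_{i,n}$) coincides with its own border; for the inductive step one deletes a minimal root $\mu$ of $I$ with $\mu\notin\{\al_{1,i},\al_{i,n}\}$, obtaining $I':=I\setminus\{\mu\}\in\mc I_\ab(\al_i)$, and must check that the simplices $\conv(B(J))$ with $\mu\in J\subseteq I$ tile the closure of $\conv(I)\setminus\conv(I')$ and glue along $\partial\conv(I')$ to the triangulation $\mc T'_{I'}$ provided by the induction hypothesis.
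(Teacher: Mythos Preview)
Your approach is genuinely different from the paper's, and in its present form it has a gap: the inequality $(\eta,\xi_\ga)\le 0$ for all $\eta\in I$ is precisely the heart of the matter, and you do not prove it. The description of $\xi_\ga$ as ``up to normalization a signed sum over a contiguous block of the $\varepsilon_k$'' is not established, and the appeal to ``a monotonicity argument in the spirit of Proposition~\ref{dispone}'' is not a proof. The alternative inductive sketch is likewise not carried out: you would still need to show that the simplices $\conv(B(J))$ with $\mu\in J\subseteq I$ exactly tile $\overline{\conv(I)\setminus\conv(I')}$ and meet $\conv(I')$ only along faces, which is not obvious. So the proposal, as written, is incomplete.

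The paper's argument avoids all of this by reducing to Theorem~\ref{indotta} itself rather than reproving a relativized version. By Proposition~\ref{vecchiatfae} there is a $w\in W^i$ with $I=M_i\setminus\ov N(w)$; then $w(\be)\in\Phi^+$ for $\be\in I$ and $w(\be)\in-\Phi^+$ for $\be\in M_i\setminus I$, so $w$ sends $\conv(I)$ into $\mc P^+$ and $F_i\setminus\conv(I)$ outside $\mc P^+$. Theorem~\ref{indotta} says the triangulation $\mc T$ restricts to $\mc P^+$; restricting further to $w(F_i)$, the simplices of $w(\mc T'_i)$ that lie in $\mc P^+$ are exactly those $w(\conv(B(J)))$ whose vertices are all positive, i.e.\ $B(J)\subseteq I$, i.e.\ $J\subseteq I$. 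Applying $w^{-1}$ gives the claim. In short: the role your hypothetical functional $\xi_\ga$ is meant to play (separating $\conv_0(B(J))$ from $\cone(I)$) is played in the paper by the coweights $\breve\omega_t$ after first moving $I$ by $w$ so that $\cone(I)$ lands inside $\mc C^+$; and for $\mc C^+$ the separation argument has already been done in Theorem~\ref{indotta}.
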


\begin{proof}
Let $w\in W^i$ be such that $I=M_i\setminus \ov N(w)$ (Proposition
\ref{vecchiatfae}). Then $w(I)\subset \mc P^+$ and $w(\al)\not\in\mc P^+$ for
all $\al\in M_i\setminus I$.  
By Theorem \ref{indotta}, there exists a subset $\mc S_w$ of $\mc T'_i$ such
that $w(F_i)\cap \mc P^+=\bigcup \{wT\mid T\in \mc S_w\}$. Since $F_i=\bigcup
\{\conv(B(J))\mid  J\in \mc I_\ab(\al_i)\}$, it  must be that 
$\mc S_w=\{\conv(B(J))\mid  J\in \mc I_\ab(\al_i),\ J\subseteq I\}$. The claim follows.
\end{proof}

\begin{rem}
It is clear that  $\mc P^+\subseteq \mc P
\cap \mc C^+$, but for a general irreducible $\Phi$, the equality may not hold.
It is immediate that the equality does not hold for $\Phi$ of type $G_2$. We give a
counterexample for $\Phi$ of type $B_3$. 
By Theorem \ref{CM}, the simplex generated by $\al_2+2\al_3$,
$\al_1+\al_2+2\al_3$, $\al_1+2\al_2+2\al_3$ is a standard
parabolic facet of $\mc P$. 
It is transformed under $s_2s_3s_2$ to the simplex generated by $-\al_2,
\al_1+\al_2+2\al_3, \al_1$, therefore this simplex is a facet. 
It follows that $\frac{1}{2}( -\al_2+
\al_1+\al_2+2\al_3)=\frac{1}{2}( \al_1+2\al_3)$ belongs to
the boundary of $\mc P$. 
But the convex linear combinations of $\al_1$ and $\al_3$ belong to
the boundary $\mc P^+$, therefore the line through $\frac{1}{2}(
\al_1+2\al_3)$ and the origin cuts $\mc P^+$
in $\frac{1}{3}( \al_1+2\al_3)$.
\end{rem}

\section{Directed graphs and simplices}
\label{digraphs}
The triangulation obtained for $\mc P^+$ in the $A_n$ case is the triangulation given by the {\it anti-standard bases} described in \cite{GGP}.  This can be represented as a special set of trees. We can extend this interpretation to the whole triangulation of $\mc P$. 
\par

Let us consider the following class of simple directed graphs (no loops, no
multiple edges). 
Given a directed edge $e$, we write $e=(s,t)$ if $s$ and $t$ are, respectively,
the source and the target of $e$.
We call a simple directed graph \emph{$n$-anti-standard} if it has
$[n+1]$ as
vertex set and exactly $n$ directed edges, 
and satisfies the 
following properties:
\begin{enumerate}
\item every vertex is either a source or a target, but not both (abelianity);
\item for all edges $e=(s,t)$ and $e'=(s',t')$, if $s<s'$ then $t\leq t'$. 
\end{enumerate} 
We can make the arcs correspond to the roots in $\Phi$ in this way: to
the arc $(i,j)$ with  $i < j$, we associate the positive root 
$\al_{i,j-1}$, and to the reversed arc the opposite root,   $-\al_{i,j-1}$.
So, if the vertices $1,2, \ldots, n+1$ lie on a horizontal line, from left to right in
the natural order,  the positive roots are exactly the arcs going from left to right, and among these, the simple roots are the arcs between adjacent vertices.  
With this correspondence, an $n$-anti-standard graph corresponds to a subset
of $\Phi$ of cardinality $n$. 
The first of the above conditions says that the corresponding set of roots is
{\it abelian}, in the sense that for any pair 
of roots in the set, their sum is not a root. 
The second condition implies that for any two positive comparable roots in the set, 
their difference is a root. 
\par

The anti-standard directed graphs generalize the concept of anti-standard tree 
introduced in \cite{GGP}. There, the authors consider the
positive root polytope $\mc P^+$ associated with the root system $A_n$ in
the usual coordinate description (see, for example,  \cite{Bou}). In
the coordinate description, $\gen \, \Phi$ is the subspace of $\mathbb
R ^{n+1}$ orthogonal to $\sum_{i=1}^{n+1} \varepsilon_i$ and the positive root
$\al_{i,j}$ is $\varepsilon_i - \varepsilon_{j+1}$, for all $i,j\in [n]$,
where $\varepsilon_1, \ldots, \varepsilon_{n+1}$ is the standard basis of
$\mathbb R ^{n+1}$. The edge $e=(h,k)$ corresponds to the root $\varepsilon_h -
\varepsilon_k$.

\begin{lem}
\label{anti}
Let $i \in [n]$. The $n$-anti-standard graphs such that the vertices  $1, \ldots, i$ are sources
and the vertices $i+1, \ldots, n+1$ are targets correspond to the simplices of the
triangulation $\mc T_i'$ of the facet $F_i$.
\end{lem}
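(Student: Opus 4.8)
The plan is to peel off two layers. By Proposition~\ref{tri-i} the simplices of $\mc T'_i$ are precisely the sets $\conv(B(I))$ with $I\in\mc I_\ab(\al_i)$, and, as recorded just before that proposition, the assignment $B\mapsto I(B)$ identifies the minimal paths from $\al_{1,i}$ to $\al_{i,n}$ inside $M_i$ with $\mc I_\ab(\al_i)$, its inverse being $I\mapsto B(I)$. So the whole statement will follow once we match, under the arc--root correspondence $(s,t)\mapsto\al_{s,t-1}$, the $n$-anti-standard graphs whose sources are $\{1,\dots,i\}$ and targets are $\{i+1,\dots,n+1\}$ with the minimal paths from $\al_{1,i}$ to $\al_{i,n}$; the simplex attached to such a graph $G$ is then the convex hull of the $n$ roots read off its $n$ arcs.

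First I would take such a $G$ and list its arcs $(s_1,t_1),\dots,(s_n,t_n)$ in lexicographic order. By condition~(2) (the staircase condition) this is the same as ordering by source, and one gets $s_1\le\cdots\le s_n$ and $t_1\le\cdots\le t_n$. Condition~(1) (abelianity) forces every vertex to lie on an arc, so, since sources are confined to $\{1,\dots,i\}$ and targets to $\{i+1,\dots,n+1\}$, we have $\{s_1,\dots,s_n\}=\{1,\dots,i\}$ and $\{t_1,\dots,t_n\}=\{i+1,\dots,n+1\}$; in particular $(s_1,t_1)=(1,i+1)$ and $(s_n,t_n)=(i,n+1)$, so $\be_1:=\al_{s_1,t_1-1}=\al_{1,i}$, $\be_n:=\al_{s_n,t_n-1}=\al_{i,n}$, and every $\be_k:=\al_{s_k,t_k-1}$ lies in $M_i$ because $s_k\le i\le t_k-1$. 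As $G$ has no repeated arc, each consecutive pair satisfies $s_k<s_{k+1}$ or $t_k<t_{k+1}$; summing the increments over the $n-1$ consecutive pairs gives $(i-1)+(n-i)=n-1$, so each pair realizes exactly one increment, of size $1$. Translating, $\be_{k+1}=\be_k-\al_{s_k}$ when $s$ jumps and $\be_{k+1}=\be_k+\al_{t_k}$ when $t$ jumps, with $s_k<i$ and $t_k>i$ respectively, so the simple root involved is never $\al_i$; hence $(\be_1,\dots,\be_n)$ is a minimal path from $\al_{1,i}$ to $\al_{i,n}$. Conversely, every minimal path between these roots has the form $(\al_{s_1,t_1-1},\dots,\al_{s_n,t_n-1})$ with $s_1\le\cdots\le s_n$, $t_1\le\cdots\le t_n$, and a single unit step at each stage, and the arcs $(s_k,t_k)$ then constitute an $n$-anti-standard graph with the prescribed sources and targets; these two constructions are mutually inverse.

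Composing with the path--ideal--border bijection, an $n$-anti-standard graph with sources $\{1,\dots,i\}$ and targets $\{i+1,\dots,n+1\}$ corresponds to the unique $I\in\mc I_\ab(\al_i)$ whose border $B(I)$ is its arc set, hence to the simplex $\conv(B(I))\in\mc T'_i$; distinct graphs have distinct arc sets and hence distinct borders, and every simplex of $\mc T'_i$ is obtained. The one point demanding attention is the counting step in the middle paragraph that pins down the shape of $G$: it is exactly there that abelianity (which makes the $n+1$ vertices split as $i$ sources and $n+1-i$ targets, all incident to arcs) and the staircase condition~(2) conspire to force the lattice-path structure. Everything else is routine, and the linear independence and integrality of the resulting vertex sets are already supplied by Proposition~\ref{unimodulare}.
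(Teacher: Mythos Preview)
Your proof is correct and follows essentially the same approach as the paper: both identify the graphs in question with the minimal paths from $\al_{1,i}$ to $\al_{i,n}$ by a counting argument showing that consecutive arcs (in the natural ordering) differ by a single unit step. The paper organizes this count by grouping arcs according to their source and comparing $\sum_s|T_s|$ with $|\bigcup_s T_s|$, whereas you telescope the increments $(s_{k+1}-s_k)+(t_{k+1}-t_k)$ directly; these are minor variations on the same idea.
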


\begin{proof}
Recall from Proposition \ref{tri-i} that the simplices of  the triangulation $\mc T_i'$ are the convex hulls of the borders of the ideals in $\mc I_\ab(\al_i)$, i.e. the convex hulls of the minimal paths from $\al_{1,i}$  to $\al_{i,n}$. 
Let $p$ be such a path: it contains $n$ roots and it is contained in the rectangle $M_i$, which is the set $\{ \al_{h,k} \mid h\in [i], k \in [i,n] \}$. 
Hence the corresponding graph has $n$ edges,  the vertices $1, \ldots, i$ as sources and the vertices $i+1,\ldots, n+1$ as targets.
Moreover, it satisfies Property (2) of the definition of $n$-anti-standard graph since it corresponds to a path.
\par

Conversely, let $G$ be an $n$-anti-standard graph such that $1, \ldots, i$ are
sources and $i+1,\ldots, n+1$ are targets. 
For all $s \in [i]$, let $T_s$ be the set of all the targets of the edges with $s$ as source.
By Property (2) of the definition of $n$-anti-standard graph  we have that $\max T_s \leq \min T_ {s+1}$, for all $s\in [i-1]$. Hence,
\begin{enumerate}
\item $T_s$ is an interval in $ [i+1,n+1]$, for all  $s \in [i]$, 
\item $|T_1\cup\dots\cup T_i|=|T_1|+\cdots+|T_i|-(|T_1\cap T_2|+\cdots +|T_{i-1}\cap T_i|)$, 
\item $|T_s\cap T_{s+1}|\leq 1$  for $1\leq s < i$. 
\end{enumerate}
By assumption,  $|T_1\cup\dots\cup T_i|=n+1-i$, while $|T_1|+\cdots+|T_i|=n$, since $n$ is the number of arcs in $G$. It follows that $|T_s\cap T_{s+1}|=1$, i.e. that  $\max T_s = \min T_ {s+1}$, for all $s\in[i-1]$.
Hence $G$ corresponds to a minimal path from $\al_{1,i}$ to $\al_{i,n}$.
\end{proof}

\begin{thm}
\label{digra}
The simplices of the triangulation $\mc T'$ of the border of $\mc P$ are exactly the sets corresponding to the $n$-anti-standard graphs.
\end{thm}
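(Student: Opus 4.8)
The plan is to bootstrap from Lemma \ref{anti}, which already matches the $n$-anti-standard graphs whose source set is exactly $\{1,\dots,i\}$ with the simplices of the triangulation $\mc T'_i$ of $F_i$, and to propagate this correspondence over the whole boundary of $\mc P$ using the action of the minimal-length coset representatives $W^i$. Recall that $\mc T'=\bigcup_{i\in[n]}W^i\mc T'_i$ is the triangulation of the boundary of $\mc P$ obtained by restricting the triangulation $\mc T$ of Theorem \ref{tri}, so its (maximal) simplices are exactly the sets $w(B(I))$ with $i\in[n]$, $w\in W^i$, $I\in\mc I_\ab(\al_i)$, each coming from a unique such triple; throughout, I identify a simplex with its vertex set, i.e. with the corresponding set of roots, and I call the set of roots attached to a directed graph on $[n+1]$ (the arc $(h,k)$ corresponding to $\varepsilon_h-\varepsilon_k$, as in Section \ref{digraphs}) its \emph{root set}.

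First I would record two observations about how $W=S_{n+1}$, acting on $\gen\,\Phi\subseteq\mathbb R^{n+1}$ by permuting the $\varepsilon_h$, interacts with the graph encoding. On one hand, $w$ transports the root set of a graph $G$ to the root set of the graph obtained from $G$ by the vertex relabeling $h\mapsto w(h)$, because $w(\varepsilon_h-\varepsilon_k)=\varepsilon_{w(h)}-\varepsilon_{w(k)}$. On the other hand, abelianity (condition (1) in the definition of an $n$-anti-standard graph) merely says that the root set of $G$ is an abelian set of roots, a $W$-stable property; and condition (2) only ever compares two sources or two targets of $G$, so it is preserved by any relabeling that is order-preserving on the source set and on the target set of $G$ separately.

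The key structural fact is that $W^i$, the set of minimal-length representatives of $W/W\la\Pi\setminus\{\al_i\}\ra$, consists precisely of the permutations of $[n+1]$ that are increasing on $\{1,\dots,i\}$ and increasing on $\{i+1,\dots,n+1\}$; equivalently $w\mapsto S:=w(\{1,\dots,i\})$ is a bijection from $W^i$ onto the set of $i$-element subsets of $[n+1]$, and for such a $w$ both $w$ and $w^{-1}$ are order-preserving on each of the two blocks $\{1,\dots,i\}\leftrightarrow S$ and $\{i+1,\dots,n+1\}\leftrightarrow[n+1]\setminus S$. Combining this with the previous paragraph: if $T\in\mc T'_i$, then by Lemma \ref{anti} $T=B(I)$ is the root set of an $n$-anti-standard graph $G_I$ with source set $\{1,\dots,i\}$, and for each $w\in W^i$ the relabeled graph $wG_I$ (source set $S$, target set $[n+1]\setminus S$) is again $n$-anti-standard, with root set $w(B(I))$. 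Hence every simplex of $\mc T'$ is the root set of an $n$-anti-standard graph. For the reverse inclusion, I would start from an arbitrary $n$-anti-standard graph $G$: by (1) it has no isolated vertex, and since it has $n\geq 1$ arcs its source set $S$ is a nonempty proper subset of $[n+1]$; put $i=|S|\in[n]$ and let $w\in W^i$ be the representative with $w(\{1,\dots,i\})=S$. Then $w^{-1}G$ is $n$-anti-standard with source set $\{1,\dots,i\}$, so by Lemma \ref{anti} its root set equals $B(I)$ for some $I\in\mc I_\ab(\al_i)$, whence the root set of $G$ is $w(B(I))$, a simplex of $W^i\mc T'_i\subseteq\mc T'$. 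Moreover $G$ determines the triple $(i,w,I)$ uniquely, so the correspondence is a bijection.

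I expect the only delicate point — the main obstacle, though it is bookkeeping rather than depth — to be the verification that the $W$-action on root sets coincides with vertex relabeling and that $W^i$ is exactly the set of permutations order-preserving on the two blocks $\{1,\dots,i\}$ and $\{i+1,\dots,n+1\}$. This is precisely what makes condition (2) survive transport by $W^i$ (while it would be destroyed by a generic element of $W$), and it is the reason the parametrization of $\mc T'$ by the triples $(i,w,I)$ lines up with the purely combinatorial notion of an $n$-anti-standard graph.
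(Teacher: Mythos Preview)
Your proof is correct and follows essentially the same approach as the paper: both use Lemma \ref{anti} for the ``initial'' anti-standard graphs with source set $\{1,\dots,i\}$, identify $W^i$ with the $(i,n+1-i)$-shuffles of $[n+1]$, observe that such shuffles preserve conditions (1) and (2) under the vertex-relabeling action, and run the argument in both directions. Your write-up is slightly more careful (e.g.\ spelling out why condition (2) survives a relabeling that is order-preserving on sources and on targets separately, and why an $n$-anti-standard graph has no isolated vertices so that $|S|\in[n]$), but the underlying idea is identical.
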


\begin{proof}
By Theorem \ref{tri} and Lemma \ref{anti}, we need to
show that each $n$-anti-standard graph is the image, under some $w\in W^i$, of a  $n$-anti-standard graphs such that $1, \ldots, i$ are sources and $i+1, \ldots, n+1$, for some $i\in [n]$. 
Using the coordinate description, $W$ acts as the group of permutations of the
standard basis vectors $\varepsilon_1, \ldots, \varepsilon_{n+1}$
and this action induces a 
faithful action on the set of digraphs of vertex set $[n+1]$. It is clear that
this action preserves abelianity. 
The permutations belonging to $W^i$ are exactly the shuffles of the first $i$
nodes with the remaining $n+1-i$ ones, i.e. 
they are all the permutations $\sigma$ such that $\sigma(1)<\cdots<\sigma(i)$
and
$\sigma(i+1)<\cdots< \sigma(n+1)$. Hence this action preserves also the property
of being an  $n$-anti-standard graph.
\par

On the other hand, let $G$ be an $n$-anti-standard graph and let $i$ be
the number of
sources of $G$ (so that $n+1-i$ is the number 
of targets by abelianity). Consider the permutation 
$\sigma$ which moves all sources in the first $i$ position without changing the
relative order among the sources and among 
the targets. Then $\sigma(G)$ corresponds to a minimal path from $\al_{1,i}$
to $\al_{i,n}$ and $\sigma^{-1}\in W^i$. 
We get the assertion.
\end{proof}
In the next figure we show an anti-standard graph and
the distinguished one which
it is obtained from. 
$$
\begin{tikzpicture}
 \foreach \x in {1, 2, 3, 4, 5}
 \filldraw (\x,0) circle(1pt);
 \pgfsetarrowsend{stealth}
 \draw  (2,0)..controls(2.3,1)and(3.7,1)..(4.02,0);  
 \draw  (3,0)..controls(3.3,1)and(4.7,1)..(5,0);
 \draw  (3,0)..controls(3.2,.3)and(3.8,.3)..(3.98,0);
 \draw  (2,0)..controls(1.8,.3)and(1.2,.3)..(1,0);
\end{tikzpicture} =
s_1s_2\ \begin{tikzpicture}
 \foreach \x in {1, 2, 3, 4, 5}
 \filldraw (\x,0) circle(1pt);
 \pgfsetarrowsend{stealth}
 \draw  (1,0)..controls(1.4,.5)and(2.6,.5)..(3.,0);  
 \draw  (1,0)..controls(1.7,1.2)and(3.3,1.2)..(4.02,0);
 \draw  (2,0)..controls(2.4,.5)and(3.6,.5)..(3.98,0);
 \draw  (2,0)..controls(2.7,1.2)and(4.3,1.2)..(5,0);
\end{tikzpicture} 
$$
The graph on  the right side of the above picture corresponds
to the following minimal path in 
$M_2$, for type $A_4$.  
$$
\begin{tikzpicture}
 \fill[gray!50](0,-.5) rectangle (.5,-1.5)  (.5,0) rectangle (1,-1);
 \draw{(0,0)--(2,0) (0,0)--(0,-2)};
 \foreach \x in {0.5, 1, 1.5, 2, 2.5}
 \draw{(0,-\x)--(2.5-\x,-\x) (\x,0)--(\x,-2.5+\x)};
\end{tikzpicture}
$$

The anti-standard graphs whose associated sets of roots are contained
in $\Phi^+$ correspond to the 
anti-standard trees in \cite{GGP} (see also \cite[ex. 6.19-q.]{StaEC2}): in
fact,
the restricted triangulation of 
Theorem \ref{indotta} is the triangulation of $\mc P^+$ 
given by the {\it anti-standard bases} studied in [loc. cit.]. 
\par

We could have done the analogous construction replacing, for all $i\in [n]$, 
the minimal paths from $\al_{1,i}$ to $\al_{i,n}$ with the minimal paths 
from $\al_i$ to the highest root $\theta$.  We notice that the minimal paths from $\al_i$ to $\theta$ correspond to the standard bases of \cite{GGP} that are included in $M_i$ and,  in fact, they still yield a triangulation of $F_i$. The triangulation obtained in this way is the transformed one of the previous one by an element  of the Weyl group. Indeed, looking at the diagram combinatorics, we can easily find an element in the Weyl group that induces a bijection  between the sets of  the standard and anti-standard bases.
For example, we can take the involution
$$
w_i^c:=s_{\al_{i_*,i_*'}}\cdots s_{\al_{2,i-2}}  s_{\al_{1,i-1}}
$$
where $i_*=\lfloor i/2\rfloor $, $i_*'=\lfloor (i+1)/2\rfloor$ (integral parts).
By a direct check, we can see that $w_i^c$ acts on $M_i$ as the antipodal
permutation of the columns, hence it maps $\al_{1,i}$ to $\al_i$, 
$\al_{i,n}$ to $\theta$, and trasforms the minimal paths from  $\al_{1,i}$
to
$\al_{i,n}$ into the minimal paths from $\al_i$ to $\theta$.
Hence, $w^c$ preserves $F_i$ and  transforms the triangulations of $F_i$  associated to the two kinds of paths into each other. 

\section{Triangulation of $\mc P$: type $C$}
\label{tr-c}
Throughout this section, $\Phi$ is a root system of type $C_n$. Recall that the only
long simple root is $\al_n$, the positive short roots are the roots $\al_{i,j}=
\sum _{k=i}^{j} \al_k$, for all $1\leq i\leq j\leq n$ except $i=j=n$, and  $\alpha_{i,n}+\alpha_{j, n-1}$,  for all $1\leq i< j\leq n-1$,
while the positive long roots are $\lambda_n=2(\sum_{k=i}^{n-1} \al_k) +
\al_n$, for $i \in [n]$.
\par

If $\be$ is any root not in $M:=M_n=\{\alpha \in \Phi^+ \mid c_n(\alpha)=1\}$,
then both $\ga= \al_1 + \cdots + \al_n $ and $\ga'= \al_1 + \cdots + \al_{n-1} $
are greater than or equal to $\be$, and $\ga + \ga'$ is a 
root. Hence all the abelian ideals are contained in $M$, and $M$ is the
unique maximal  abelian ideal. By Theorem \ref{CM}, $\mc P$ has $F:=F_n$ as its
unique standard parabolic facet; the stabilizer of $F$ is the parabolic
subgroup $W \la \Phi \setminus \al_n \ra$, and the set of all facets of $\mc P$
is  the orbit of $F$ under the action of $W$.
\par

As a matter of fact, the
triangulation we shall construct has as vertices not only the vertices of $\mc
P_{\Phi}$, which are the long roots of $\Phi$, but also the short roots of
$\Phi$, which lie in the inner part of edges of $\mc
P_{\Phi}$ (see \cite[\S 5]{C-M}). 
\par

Recall that $W^n$ denotes   
the set of the minimal left cosets representatives of $W \la \Phi \setminus
\al_n \ra$, which corresponds to the 
orbit of $F$ under $W$.

\begin{pro}
Let $I$ be an abelian ideal. Then $I\in \mc I_\ab(\al_n)$ if and only if 
$\al_{1,n}\in I$. 
\end{pro}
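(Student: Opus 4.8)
The plan is to reduce everything to Proposition~\ref{Ialfa}. Since $\al_n$ is the unique long simple root of type $C_n$, that proposition says that an abelian ideal $I$ lies in $\mc I_\ab(\al_n)$ if and only if, for every decomposition $\theta=\be+\ga$ with $\be,\ga\in\Phi^+$, exactly one of $\be$ and $\ga$ belongs to $I$. So the first step is to enumerate the decompositions of $\theta$ as a sum of two positive roots. Since $c_n(\theta)=m_n=1$, one of $\be,\ga$ has vanishing $n$-th coordinate and the other has $n$-th coordinate $1$; say $c_n(\ga)=0$ and $c_n(\be)=1$, so $\ga\in\Phi^+\setminus M$ and $\be\in M=M_n$. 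The positive roots with zero $n$-th coordinate are exactly the $\al_{i,j}$ with $1\le i\le j\le n-1$, so $\ga=\al_{i,j}$ and $\be=\theta-\al_{i,j}$. As $\al_{i,j}\neq\un 0$, the positive root $\be$ is not $\theta$, and since $\theta$ is the only positive root with first coordinate equal to $m_1=2$ we must have $c_1(\be)=1$, i.e.\ $i=1$. Hence the decompositions of $\theta$ into two positive roots are precisely $\theta=(\theta-\al_{1,j})+\al_{1,j}$ for $j\in[1,n-1]$, and conversely each such $j$ does give a valid decomposition: $\theta-\al_{1,j}$ equals $\al_{1,n}$ when $j=n-1$ and $\al_{1,n}+\al_{j+1,n-1}$ when $j\le n-2$, which are roots lying in $M$.

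Next I would invoke the fact, recalled just above the statement, that every abelian ideal is contained in $M$; in particular $\al_{1,j}\notin I$ for all $j\in[1,n-1]$. Thus the condition provided by Proposition~\ref{Ialfa} collapses to the requirement that $\theta-\al_{1,j}\in I$ for every $j\in[1,n-1]$. The last step is the observation that all of the roots $\theta-\al_{1,j}$ dominate $\al_{1,n}$ in the root poset: indeed $\theta-\al_{1,n-1}=\al_{1,n}$, and for every $j\in[1,n-1]$ one has $(\theta-\al_{1,j})-\al_{1,n}=\al_{1,n-1}-\al_{1,j}$, which is the root $\al_{j+1,n-1}$ when $j\le n-2$ and $\un 0$ when $j=n-1$, hence lies in $\Phi^+\cup\{\un 0\}$. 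Since $I$ is a dual order ideal, the requirement ``$\theta-\al_{1,j}\in I$ for all $j\in[1,n-1]$'' is therefore equivalent to the single condition $\al_{1,n}\in I$. Combining this with the reformulation via Proposition~\ref{Ialfa} gives the claim.

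There is no genuinely difficult step: the argument is just bookkeeping of the decompositions of $\theta$ together with the fact that $I$ is a filter. The only facts about the root system of type $C_n$ that I use are that the positive roots with vanishing $n$-th coordinate are exactly the $\al_{i,j}$ with $j\le n-1$, and that $\theta$ is the unique positive root with first coordinate $m_1=2$; both follow at once from the explicit list of positive roots recalled at the start of this section (short roots $\al_{i,j}$ and $\al_{i,n}+\al_{j,n-1}$, long roots $\lambda_i$). I could also phrase the computation symmetrically by noting that, as $j$ ranges over $[1,n-1]$, the roots $\theta-\al_{1,j}$ run exactly through $\al_{1,n}$ and the roots $\al_{1,n}+\al_{k,n-1}$ with $k\in[2,n-1]$, which are precisely the elements of $M$ above $\al_{1,n}$ that occur in a decomposition of $\theta$.
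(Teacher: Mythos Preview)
Your proof is correct and follows essentially the same approach as the paper's: both reduce to Proposition~\ref{Ialfa}, use that every abelian ideal is contained in $M$ so the summand outside $M$ is never in $I$, and show that the summand $\be\in M$ in any decomposition $\theta=\be+\ga$ satisfies $\be\geq\al_{1,n}$, with $\al_{1,n}$ itself occurring for $\ga=\al_{1,n-1}$. The only cosmetic difference is that the paper derives $\be\geq\al_{1,n}$ in one stroke from $c_k(\ga)\leq 1$ for all $k$ (since $\ga$ lies in the $A_{n-1}$ subsystem), whereas you first pin down $\ga=\al_{1,j}$ via the observation that $\theta$ is the unique positive root with $c_1=2$ and then compute $\be-\al_{1,n}$ explicitly; both routes are equally short.
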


\begin{proof}
Let $\theta=\beta+\gamma$ be a decomposition of $\theta$ as a sum of two
positive roots. 
Then exactly one of $\be$ and $\ga$ belongs to $M$. Assume $\be \in M$, so
that  $\ga\in \Phi(\Pi\setminus\{\al_n\})$. Since
$\theta=2\al_1\cdots+2\al_{n-1}+\al_n$ and since $\Phi(\Pi\setminus\{\al_n\})$
is of type $A_{n-1}$, we obtain that $\be\geq \al_{1,n}$. Since
$\theta=\al_{1,n}+\al_{1, n-1}$, we get the assertion by  Proposition \ref{Ialfa}.
\end{proof}

The  border strips $B(I)$ of the ideals in $\mc I_\ab(\al_n)$ are the minimal 
paths from $\al_{1,n}$ to a long root. All such minimal paths consist of $n-1$
steps, either leftwards, or downwards,  starting from $\al_{1,n}$, all choices
being allowed. In particular, they are $2^{n-1}$, in number, and hence $|\mc
I_\ab(\al_n)|=2^{n-1}$.

$$
\begin{tikzpicture}
[scale=.6]
\label{path_c}

\foreach \x in {0,1, 2, 3, 4, 5, 6, 7}
\fill[gray!20](-7+ \x ,0) rectangle (1,-1-\x);
 \draw{(-7,0)--(8,0)};

\fill[gray!80](-1 ,0) rectangle (1,-1);

\fill[gray!80](-1 ,-1) rectangle (0,-3);

\fill[gray!80](-3 ,-3) rectangle (0,-4);

\fill[gray!80](-3 ,-4) rectangle (-2,-5);

\draw{(-7,0)--(8,0)};
 \foreach \x in {0,1, 2, 3, 4, 5, 6, 7}
 \draw{(-7+\x,-\x-1)--(8-\x,-\x-1) 
           (-7+\x,0)--(-7+\x,-1-\x)
           (8-\x, 0)--(8-\x, -1-\x)};
\pgftext[base, left, x=.05cm, y=-.7cm]{\Large $\al_{1,8}$};
\pgftext[base, left, x=-6.8cm, y=-.7cm]{\Large $\lambda_1$};
\pgftext[base, left, x=-5.8cm, y=-1.7cm]{\Large $\lambda_2$};
\pgftext[base, left, x=-4.8cm, y=-2.7cm]{\Large $\lambda_3$};
\pgftext[base, left, x=-3.8cm, y=-3.7cm]{\Large $\lambda_4$};
\pgftext[base, left, x=-2.8cm, y=-4.7cm]{\Large $\lambda_5$};
\pgftext[base, left, x=-1.8cm, y=-5.7cm]{\Large $\lambda_6$};
\pgftext[base, left, x=-.8cm, y=-6.7cm]{\Large $\lambda_7$};
\pgftext[base, left, x=.2cm, y=-7.7cm]{\Large $\lambda_8$};

\pgftext[base, left, x=-7cm, y=-7cm]{\Large a path from $\al_{1,8}$ to 
$\lambda_5$ };
\end{tikzpicture}
$$

\begin{pro}\label{unimodulareC}
For all abelian ideals $I\in \mc I_\ab(\al_n)$, $B(I)$ is a basis of the root lattice. 
\end{pro}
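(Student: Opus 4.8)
The plan is to list the $n$ roots of $B(I)$ along the path explicitly, compute the $n-1$ consecutive differences, and observe that they are $\pm\al_1,\dots,\pm\al_{n-1}$, each occurring once; since $\al_{1,n}=\al_1+\cdots+\al_n$, this forces the $\ganz$-span of $B(I)$ to be the whole root lattice $Q:=\sum_{i\in[n]}\ganz\,\al_i$, and a generating set of $n$ elements of the rank-$n$ free $\ganz$-module $Q$ is automatically a $\ganz$-basis.

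In detail: as recalled just before the statement, for $I\in\mc I_\ab(\al_n)$ the border $B(I)$ is, in the $C_n$ root diagram, a minimal path from $\al_{1,n}$ (cell $(1,n)$) to a long root $\lambda_k$ (cell $(k,k)$) for some $k\in[n]$, made of $\al_{1,n}$ together with $n-1$ further roots obtained by $n-1$ unit steps, each one step leftward or one step downward. A path of this (minimal) length between the two cells must be monotone, so it stays inside the block of cells with rows in $[1,k]$ and columns in $[k,n]$ --- all of which do occur in the diagram --- and performs exactly $n-k$ leftward steps (leaving the columns $n,n-1,\dots,k+1$, each once) and $k-1$ downward steps (leaving the rows $1,2,\dots,k-1$, each once). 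Write $\be_1=\al_{1,n},\be_2,\dots,\be_n=\lambda_k$ for the roots of $B(I)$ read along the path, so that $\be_{h+1}-\be_h\in\pm\Pi$ for $h\in[n-1]$.

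Next I would use the explicit filling of the $C_n$ diagram recalled above: by direct inspection, a leftward step from column $j$ to column $j-1$ changes the root by $\al_{j-1}$ (independently of the row), and a downward step from row $i$ to row $i+1$ changes the root by $-\al_i$ (independently of the column). Hence the multiset $\{\be_{h+1}-\be_h\mid h\in[n-1]\}$ equals
\[
\{\al_k,\al_{k+1},\dots,\al_{n-1}\}\cup\{-\al_1,-\al_2,\dots,-\al_{k-1}\},
\]
i.e. it contains $\pm\al_j$ for every $j\in[n-1]$, each exactly once. (This matches the fact that $B(I)\subseteq M$, so every root of $B(I)$ has $n$-th coordinate $1$ and a difference of two of them never involves $\al_n$; in particular these consecutive differences are pairwise distinct and nonzero, reconfirming that $B(I)$ has $n$ elements.) Therefore the $\ganz$-span of $B(I)$ is $\ganz\,\al_{1,n}+\sum_{j=1}^{n-1}\ganz\,\al_j=\ganz\,\al_n+\sum_{j=1}^{n-1}\ganz\,\al_j=Q$, using $\al_{1,n}=\al_1+\cdots+\al_n$. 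Since $B(I)$ has $n$ elements and generates the rank-$n$ lattice $Q$, it is a $\ganz$-basis of $Q$, as claimed.

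I do not anticipate a real obstacle. The two points that need a little care are the bookkeeping that each simple root $\al_j$ with $j<n$ shows up exactly once among the consecutive differences (this rests on the path being a monotone minimal path between the two cells of the diagram, together with the description of the diagram filling), and the elementary module-theoretic fact that a generating set of cardinality equal to the rank of a free $\ganz$-module is a basis.
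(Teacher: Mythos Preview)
Your proof is correct and follows the same approach as the paper's: both observe that the consecutive differences along the minimal path are exactly $\pm\al_1,\dots,\pm\al_{n-1}$ (each once), and that since $\al_{1,n}$ has $n$-th coordinate $1$, this forces the $\ganz$-span of $B(I)$ to be the full root lattice. Your version is more explicit about the bookkeeping (identifying which simple root arises from each leftward or downward step), but the underlying argument is identical.
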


\begin{proof} 
Two adjacent roots in a minimal path from $\al_{1, n}$ to a long root in the root diagram differ by a
simple root;  the set of the positive differences between adjacent roots in a
fixed path is $\{\al_1, \dots, \al_{n-1}\}$; since $\al_{1, n}$
contains $\al_n$ with coefficient $1$, all the simple roots are integral
linear combinations of the roots in the path.  
\end{proof}

This set parametrizes a triangulation of $F$.

\begin{pro}\label{tri-n}
The set 
$$
\mc T'_n=\{\conv(B(I))\mid I\in \mc I_\ab(\al_n)\}
$$
is a triangulation of $F$. 
\par

Two simplices are adjacent
in  $\mc T'_n$ if and only if the corresponding abelian ideals differ in only
one element.
\end{pro}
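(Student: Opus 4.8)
The plan is to reduce the statement to elementary geometry inside the $(n-1)$-simplex $F=F_n=\conv(\lambda_1,\dots,\lambda_n)$ and then to induct on $n$ (the base case being immediate). First I would record the identity, immediate from $\lambda_i-\lambda_{i+1}=2\al_i$ and $\lambda_n=\al_n$, that the root occupying cell $(i,j)$ of the $C_n$ root diagram ($1\le i\le j\le n$) is exactly $\tfrac12(\lambda_i+\lambda_j)$. Hence $M=M_n$ is the union of the vertex set $\{\lambda_1,\dots,\lambda_n\}$ of $F$ with the set of all $\binom n2$ midpoints of its edges. Combining this with Proposition~\ref{unimodulareC}, for each $I\in\mc I_\ab(\al_n)$ the $n$ roots of $B(I)$ form a $\ganz$-basis of the root lattice, so they are affinely independent; since they all lie on $M\subseteq F$, the set $\sigma_I:=\conv(B(I))$ is a genuine $(n-1)$-simplex contained in $F$. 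A direct check from \eqref{bordo} shows that $\al_{1,n}=\tfrac12(\lambda_1+\lambda_n)$, the root of the starting cell $(1,n)$ of every minimal path, lies in $B(I)$ for all $I$, so $\al_{1,n}$ is a common vertex of all the $\sigma_I$.

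For the inductive step I would split the minimal paths from $\al_{1,n}$ to a long root according to whether their first move is ``downwards'' or ``leftwards''. Deleting the initial cell $(1,n)$ identifies the first-move-down paths with the minimal paths of the $C_{n-1}$ root diagram carried by $\{\al_2,\dots,\al_n\}$, whose long roots are $\lambda_2,\dots,\lambda_n$; under this identification $B(I)=\{\al_{1,n}\}\sqcup B'(I')$, and the facet $G:=\conv(\lambda_2,\dots,\lambda_n)$ of $F$ is precisely the polytope $F_{n-1}$ of that subsystem. By the inductive hypothesis the simplices $\conv(B'(I'))$ triangulate $G$; and since $\al_{1,n}$ is not in the affine hull of $G$ (its barycentric coordinate on $\lambda_1$ equals $\tfrac12$, while that coordinate vanishes on $G$), coning this triangulation from $\al_{1,n}$ triangulates the simplex $\conv(\al_{1,n},G)$. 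Writing $(t_1(x),\dots,t_n(x))$ for the barycentric coordinates of $x$ with respect to $\lambda_1,\dots,\lambda_n$, a short computation gives $\conv(\al_{1,n},G)=\{x\in F\mid t_1(x)\le t_n(x)\}$. The anti-diagonal reflection of the $C_n$ diagram fixes $(1,n)$ and interchanges the two kinds of moves; it is realized by the linear isometry $R\colon\lambda_i\mapsto\lambda_{n+1-i}$ of $\gen\,\Phi$ (the $\lambda_i$ are mutually orthogonal long roots, hence of equal length, so permuting them is an isometry). As $R$ permutes $\{\sigma_I\}$, carries the first-move-down simplices onto the first-move-left ones, and carries $\{t_1\le t_n\}$ onto $\{t_n\le t_1\}$, the first-move-left simplices triangulate the complementary half of $F$.

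It remains to glue the two halves. They cover $F$ and meet exactly along the facet $Q:=\{x\in F\mid t_1(x)=t_n(x)\}$. Each first-move-down simplex lies in the half-space $\{t_1\le t_n\}$, hence meets $Q$ in one of its own faces, and these faces make up the triangulation of $Q$ induced from the first half; likewise for the second half. Since $R$ fixes $Q$ pointwise (the coefficients of $\lambda_1$ and $\lambda_n$ agree on $Q$) while exchanging the two halves, it maps the first induced triangulation of $Q$ onto itself and onto the second one, so the two coincide; therefore $\mc T'_n=\{\sigma_I\}$ is a triangulation of $F$. (As an independent check of the count: all $\sigma_I$ have the same $(n-1)$-volume because their vertex sets are lattice bases lying in one translate of $\breve\omega_n^{\perp}$, and this common value is $2^{-(n-1)}\vol(F)$ since $\lambda_i-\lambda_1=-2(\al_1+\cdots+\al_{i-1})$, so the $2^{n-1}$ simplices account for all of $\vol(F)$.) Finally, two simplices $\sigma_I$ and $\sigma_J$ are adjacent, i.e. share a facet, iff $|B(I)\cap B(J)|=n-1$, that is iff the two minimal lattice paths differ in a single box; and because a minimal path is the antichain of minimal elements of the dual order ideal it generates and always has $n$ boxes, a single box flip $(i,j-1)\leftrightarrow(i+1,j)$ corresponds to adding or deleting exactly one root of the ideal, which is the claimed equivalence.

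The step I expect to be the main obstacle is the bookkeeping inside the induction: making precise that deleting the starting cell really matches the first-move-down paths, their borders, and the facet $G$ with the $C_{n-1}$ data, and that the two halves induce the same triangulation of the shared facet $Q$ (which the pointwise-fixed-point property of $R$ settles), together with the two-way box-flip $\leftrightarrow$ single-root correspondence behind the adjacency statement. Everything else reduces to the cell--midpoint identity above and the standard fact that coning a triangulation of a polytope from a point off its affine hull yields a triangulation.
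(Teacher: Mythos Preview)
Your inductive half-space splitting is a genuinely different route from the paper's proof. The paper argues directly: writing $x=\sum_i x_i\lambda_i\in F$ and using $\lambda_1+\lambda_n=2\al_{1,n}$ (and more generally $\lambda_i+\lambda_j=2\cdot(\text{root at }(i,j))$), it rewrites $x$ step by step as a nonnegative combination of some $B(I)$, and observes that the rewriting is generically unique; Proposition~\ref{unimodulareC} then forces pairwise intersections to be common faces. Your approach, by contrast, makes the recursion and the count $2^{n-1}$ transparent, and the adjacency argument you give at the end is correct in both directions.

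There is, however, a real gap in the gluing step. The claim that $R$ fixes $Q=\{x\in F\mid t_1(x)=t_n(x)\}$ pointwise is false for $n\ge 4$: since $R$ sends $t_i\mapsto t_{n+1-i}$, on $Q$ it still swaps $t_2\leftrightarrow t_{n-1}$, $t_3\leftrightarrow t_{n-2}$, and so on. So you cannot conclude from $R$-invariance that the first-half induced triangulation of $Q$ equals itself, and hence equals the second-half one.

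The repair is short once you identify the $Q$-facets explicitly. A vertex of a first-move-down simplex lies on $Q$ iff it is $\al_{1,n}$ or sits in a cell $(i,j)$ with $2\le i\le j\le n-1$; hence such a simplex has a facet on $Q$ precisely when its path is $(1,n)\to(2,n)\to(2,n-1)\to\cdots$, and that facet is $\{\al_{1,n}\}$ together with the tail from $(2,n-1)$, a minimal path in the $C_{n-2}$ block on $\lambda_2,\dots,\lambda_{n-1}$. The symmetric analysis on the first-move-left side gives exactly the same collection of $Q$-facets (the tail from $(2,n-1)$ after $(1,n)\to(1,n-1)\to(2,n-1)$). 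Thus both halves induce the \emph{same} triangulation of $Q=\conv(\al_{1,n},\lambda_2,\dots,\lambda_{n-1})$, namely the cone from $\al_{1,n}$ over the $C_{n-2}$ triangulation of $\conv(\lambda_2,\dots,\lambda_{n-1})$, and the two halves glue. With this correction your proof goes through.
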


\begin{proof}
We have to prove that $F=\bigcup\limits_{ I\in \mc I_\ab(\al_n)}\conv(B(I))$
and that, 
for all $I, I'\in  \mc I_\ab(\al_n)$, $\conv(B(I))\cap
\conv(B(I'))$ is a common face of $\conv(B(I))$ and $\conv(B(I'))$.
\par
It is easy to check that $F$ is the convex hull of the long positive roots $\lambda_i$. Therefore, in order to prove that
$F=\bigcup\limits_{ I\in \mc I_\ab(\al_n)}\conv(B(I))$,  it suffices to prove
that any positive linear combination of long positive roots is a positive linear
combination of the roots in $B(I)$, for some $I$ in $\mc I_\ab(\al_n)$. 
\par

Let $x=\sum_{i=1}^n x_i\lambda_i$ with $x_i\geq 0$. We will find the
appropriate $I \in \mc I_\ab(\al_n)$ step by step. Since $\lambda_1 + \lambda_n
=
2\al_{1,n}$, we have:
\begin{itemize}
\item[(a)]
if $x_1>x_n$, then $x=(x_1-x_n)\lambda_1+\sum_{i=2}^{n-1}x_i\lambda_i+
2x_n\al_{1,n}$; 
\item[(b)]
if $x_1 < x_n$, then $x=\sum_{i=2}^{n-1} x_i \lambda_i+ (x_n - x_1) \lambda_n +
2 x_1\al_{1,n}$;
\item[(c)]
if $x_1 = x_n $, then $x=\sum_{i=2}^{n-1} x_i \lambda_i +
2 x_1\al_{1,n} = \sum_{i=2}^{n-1} x_i \lambda_i +
2 x_n\al_{1,n}$ ;
\end{itemize}
hence we may write $x$ as a positive linear combination of either
$\{\lambda_1, \ldots, \lambda_{n-1}, \al_{1,n}\}$, or  $\{\lambda_2, \ldots,
\lambda_{n}, \al_{1,n}\}$. Since, in general, any root $\alpha \in M_n$ is half
the sum of the unique long root in its row and the unique long root in its column, by
iterating this process we get the border of the desired  $I \in \mc
I_\ab(\al_n)$. Moreover, generically, the element $x=\sum_{i=1}^n x_i\lambda_i$
belongs to $\conv(B(I))$ for only one $I \in \mc I_\ab(\al_n)$: this does not
hold only if, at some step, it occurs that we are in case (c).
Hence 
$\vol\left(\conv(B(I))\cap
\conv(B(I'))\right)=0$. By Proposition \ref{unimodulareC}, for all $I \in \mc
I_\ab(\al_n)$, $\conv B(I) \cap \Phi = B(I)$ and therefore $\conv(B(I))\cap
\conv(B(I'))$ is a common face of $\conv(B(I))$ and $\conv(B(I'))$ for all 
$I, I' \in \mc I_\ab(\al_n)$.
\end{proof}

From the triangulation of $F$, we can construct a triangulation of
the whole $\mc P$  through the action of the elements in $W^n$.
Hence we obtain the following result.

\begin{thm}
\label{tri_c}
Let 
$$
\mc T_n=\{\conv_0(B(I))\mid I\in \mc I_\ab(\al_n)\}\quad\text{and}\quad
\mc T=W^n\mc T_n.
$$
Then $\mc T$ is a triangulation of  $\mc P$.
\end{thm}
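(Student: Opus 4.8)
The plan is to bootstrap from the facet triangulation of Proposition \ref{tri-n} in two routine stages, after which one delicate gluing step remains. First I would cone $\mc T'_n$ from the origin to triangulate the pyramid $\conv_0(F)$; then I would transport this across all facets of $\mc P$ by the coset representatives $W^n$, the only genuine work being that the resulting simplices meet face to face.

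\emph{Coning and covering.} By Proposition \ref{tri-n}, $\mc T'_n=\{\conv(B(I))\mid I\in\mc I_\ab(\al_n)\}$ triangulates $F=F_n$, which lies in the affine hyperplane $\{x\mid(x,\breve\omega_n)=1\}$ (recall $m_n=1$), so $\un 0\notin\aff F$; moreover each $B(I)$ is a $\ganz$-basis of the root lattice by Proposition \ref{unimodulareC}, hence a vector basis of $\gen\,\Phi$, so $\conv_0(B(I))$ is a genuine $n$-simplex and $\conv_0$ of the face relations of $\mc T'_n$ are preserved. Since coning a triangulation of a polytope with an apex off its affine hull again yields a triangulation, $\mc T_n$ triangulates $\conv_0(F)$. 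Next, by Theorem \ref{CM}, $F$ is the unique standard parabolic facet of $\mc P$ with stabilizer $W\la\Pi\setminus\{\al_n\}\ra$, so the facets of $\mc P$ are exactly the $wF$, $w\in W^n$; and since $\Phi=-\Phi$ the centroid of $\Phi$ is $\un0\in\mathrm{int}\,\mc P$, whence $\mc P=\bigcup_{G}\conv_0(G)$ over facets $G$ with $\conv_0(G)\cap\conv_0(G')=\conv_0(G\cap G')$ for any two facets. As $\un 0$ is $W$-fixed, $w\conv_0(F)=\conv_0(wF)$, so $\mc P=\bigcup_{w\in W^n}\bigcup_{I}w\,\conv_0(B(I))$ and $\mc T$ covers $\mc P$; two simplices $w\sigma,w\sigma'$ ($\sigma,\sigma'\in\mc T_n$) with the same $w$ meet in a common face by applying $w$ to $\mc T_n$.

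\emph{Gluing across facets.} For $w\neq w'$ in $W^n$ set $G=wF\cap w'F$, a face of $\mc P$ of dimension $\le n-2$; then $w\conv_0(F)\cap w'\conv_0(F)=\conv_0(G)$, so $w\sigma\cap w'\sigma'\subseteq\conv_0(G)$ for $\sigma,\sigma'\in\mc T_n$. Since $w\mc T'_n$ and $w'\mc T'_n$ each restrict to a triangulation of the simplex $G$, it suffices to show these two induced triangulations of $G$ coincide: then $w\sigma\cap w'\sigma'$ is the cone from $\un0$ over a common cell of one triangulation of $G$, hence a face of both simplices. Reducing to $\dim G=n-2$ and applying $w^{-1}$, one may assume $wF=F$ and $G=\conv\{\lambda_i\mid i\neq k\}$; since $\mc P$ is a cross-polytope the other facet through $G$ is $s_{\lambda_k}(F)$, the reflection of $F$ in $\lambda_k^\perp\supseteq G$, and $s_{\lambda_k}$ fixes $G$ pointwise, so $s_{\lambda_k}\mc T'_n$ and $\mc T'_n$ induce the same triangulation of $G$. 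The remaining point is that replacing $s_{\lambda_k}$ by the \emph{minimal-length} representative of its coset in $W/W\la\Pi\setminus\{\al_n\}\ra$ does not change this induced triangulation, i.e.\ the relevant stabilizer element permutes the cells of $\mc T'_n|_G$. I would prove this from the description of $\mc T'_n$ by minimal paths from $\al_{1,n}$ to a long root together with the order-preserving action of $W^n$ on $M=M_n$ (Proposition \ref{dispone}), so that minimal paths go to minimal paths on $G$; alternatively, by producing an intrinsic combinatorial description of the boundary triangulation $\bigcup_{w\in W^n}w\mc T'_n$ — the type-$C$ analogue of the $n$-anti-standard graphs of Theorem \ref{digra} — from which compatibility is automatic.

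\emph{Main obstacle.} The coning and covering are routine convex geometry; the genuine difficulty is precisely the last point, the compatibility of the induced triangulations on the shared $(n-2)$-faces. Here the minimal-length choice of $W^n$ is essential (cf.\ the Remark after Theorem \ref{indottagen}), and I expect to settle it via the explicit path-in-the-Young-diagram combinatorics of type $C_n$, in close parallel with Lemma \ref{anti} and Theorem \ref{digra} in the type $A$ case.
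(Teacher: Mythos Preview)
Your proposal is considerably more elaborate than the paper's own treatment. The paper gives essentially no proof of this theorem: the sentence preceding it reads ``From the triangulation of $F$, we can construct a triangulation of the whole $\mc P$ through the action of the elements in $W^n$. Hence we obtain the following result'', and no further argument follows. More tellingly, in Section~\ref{idealiabeliani} the paper explicitly asserts that \emph{any} system $\mc R_i$ of coset representatives yields a triangulation of $\mc P$, and remarks that the choice of minimal-length representatives is essential only for Theorem~\ref{indottagen} (the restriction to $\mc P^+$), not for the triangulation of $\mc P$ itself. So the paper regards your ``coning and covering'' stage as the entire proof, and your ``gluing across facets'' stage as unnecessary.

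Your identification of the gluing as a genuine obstacle is therefore at odds with the paper's stance. The way to reconcile them is through the intended meaning of ``triangulation''. If it means a dissection of $\mc P$ into full-dimensional simplices with pairwise null-volume intersection --- which suffices for every application in the paper, including the volume computations in Section~\ref{volume} and Theorem~\ref{indottagen} --- then gluing is automatic, since the pyramids $\conv_0(wF)$ over distinct facets meet only in sets of measure zero. If instead one insists on a bona fide simplicial complex with face-to-face intersections, then you are right that something must be checked; in fact under that reading the paper's claim that \emph{any} representatives work is false already in $C_4$, where one can choose a representative for a facet adjacent to $F$ so that the two induced subdivisions of the shared $(n-2)$-face differ. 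Your programme (minimal paths, Proposition~\ref{dispone}, an analogue of Theorem~\ref{digra}) is then the right one, but you would be proving strictly more than the paper claims or uses.
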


\begin{rem}\label{numeroC}
The cardinality of $W^n$ is $2^n$ and the number of border strips in $\mc
T$ is $2^{n-1}$. Hence the total number of simplices in $\mc T$ is $2^{2n-1}$.
\end{rem}

Actually, this triangulation induces also a triangulation of $\mc P^+$. 
To prove this, we need the following lemma. Recall that, for any long positive
root
$\lambda$, we have set
\begin{itemize}
 \item[] $U_{\lambda}:= \{\be \in \Phi^+ \mid \be - \lambda \in \Phi^+\}$,
\item[] $R_{\lambda}:= \{\be \in \Phi^+ \mid \lambda - \be \in \Phi^+\}$
\end{itemize}
(see equations   (\ref{u-r}) and the subsequent figure).

Note that $U_{\lambda}$ and $M\cap R_{\lambda}$ both contain only short roots
$\be$ such that $2 \be - \lambda \in \Phi^+$. Equivalently, given any
positive root $\be \in M$, say $\be = \sum_{i=i_1}^{i_2}\al_i +
2 \sum_{i=i_2+1}^{n-1}\al_i + \al_n$, the hook centered at $\be$
contains the two long positive roots $\lambda_{i_1}= 2 \sum_{i=i_1}^{n-1}\al_i +
\al_n$ and $\lambda_{i_2} = 2 \sum_{i=i_2+1}^{n-1}\al_i +
\al_n$ satisfying $\lambda_{i_1} + \lambda_{i_2} = 2 \be$.

\begin{lem}
\label{sopra-destra}
Let $t \in [n]$, $w \in W^n$, and $\lambda$ be a positive long root. 
\begin{enumerate}
 \item 
\label{sopra-destra1}
If 
$(w (\lambda) , \breve\omega_t)<0$, then $(w(\be) , \breve\omega_t) \leq
0$ for all $\be \in U_{\lambda}$.
\item 
\label{sopra-destra2}
If  
$(w (\lambda) , \breve\omega_t)>0$, then $(w(\be) , \breve\omega_t) \geq
0$ for all $\be \in M\cap R_{\lambda}$.
\end{enumerate}
\end{lem}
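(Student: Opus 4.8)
The plan is to deduce both parts from a single coordinate computation; throughout I write $(\gamma,\breve\omega_t)=c_t(\gamma)$ for the $\alpha_t$-coordinate of a root $\gamma$, and I recall that in type $C_n$ one has $m_t=2$ for $t<n$ and $m_n=1$. Fix $w\in W^n$. Since $D_r(w)\subseteq\{\alpha_n\}$, the element $w$ sends each of $\alpha_1,\dots,\alpha_{n-1}$ to a positive root, hence it maps every positive root of the parabolic subsystem $\Phi(\Pi\setminus\{\alpha_n\})$ (which is of type $A_{n-1}$) into $\Phi^+$: such a root is a nonnegative integral combination of $\alpha_1,\dots,\alpha_{n-1}$, so its image is a nonnegative combination of positive roots and is itself a root. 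This is the only property of $w$ that I will use.

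The first step is to situate $U_\lambda$ and $M\cap R_\lambda$ with respect to $\alpha_n$. If $\beta\in U_\lambda$, set $\mu:=\beta-\lambda\in\Phi^+$. Since $c_n(\lambda)=1=m_n$ and $c_n(\mu)\ge 0$, we have $c_n(\beta)=c_n(\lambda)+c_n(\mu)\ge 1$; as $c_n(\beta)\le m_n=1$, it follows that $c_n(\beta)=1$ and $c_n(\mu)=0$, so $\mu\in\Phi^+(\Pi\setminus\{\alpha_n\})$. Dually, if $\beta\in M\cap R_\lambda$ then $\nu:=\lambda-\beta\in\Phi^+$ has $c_n(\nu)=c_n(\lambda)-c_n(\beta)=1-1=0$ (here the hypothesis $\beta\in M$ is essential), so again $\nu\in\Phi^+(\Pi\setminus\{\alpha_n\})$. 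Applying $w$ and the first paragraph, $w(\beta)=w(\lambda)+w(\mu)$ with $w(\mu)\in\Phi^+$ in case (1), and $w(\beta)=w(\lambda)-w(\nu)$ with $w(\nu)\in\Phi^+$ in case (2).

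Now I would invoke two elementary facts about $C_n$: (i) $|c_t(\gamma)|\le m_t$ for every root $\gamma$ and every $t$ (a standard consequence of $\theta$ being the highest root); and (ii) every long root $\ell$ has $c_t(\ell)$ even for $t<n$, so $c_t(\ell)\in\{0,\pm2\}$ there, while $c_n(\ell)\in\{0,\pm1\}$ (immediate from $\ell=\pm2\varepsilon_i$ in the standard model, or from $\ell/2$ lying in the coroot lattice). For part (1): assume $c_t(w\lambda)<0$. If $t<n$ then $c_t(w\lambda)\le -2$ by (ii), while $c_t(w\mu)\le 2$ by (i), so $c_t(w\beta)=c_t(w\lambda)+c_t(w\mu)\le 0$. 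If $t=n$ then $c_n(w\lambda)=-1$ and $c_n(w\mu)\le 1$, so again $c_n(w\beta)\le 0$. Part (2) is the mirror image: if $c_t(w\lambda)>0$ then $c_t(w\lambda)=2$ for $t<n$ and $c_t(w\lambda)=1$ for $t=n$ by (ii), and since $c_t(w\nu)\le m_t$ by (i) we get $c_t(w\beta)=c_t(w\lambda)-c_t(w\nu)\ge 0$ in both cases.

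I expect the proof to be short, the one subtle point being fact (ii): the crude bound $|c_t|\le m_t=2$ does not suffice on its own (a long root with $c_t=-1$ together with a root with $c_t=2$ would violate the claim), and it is precisely the evenness of the $\alpha_t$-coordinates of long roots, together with $m_n=1$, that closes the gap. This parity phenomenon is exactly what is absent in type $A_n$, where $m_t=1$ for all $t$ makes the analogous inequality used inside the proof of Theorem \ref{indotta} essentially automatic.
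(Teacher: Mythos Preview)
Your proof is correct, but it takes a different route from the paper's. The paper's argument rests on the single observation that for $\beta\in U_\lambda$ (respectively $\beta\in M\cap R_\lambda$) one has $2\beta-\lambda\in\Phi^+$: indeed $\beta$ lies in the row or column of $\lambda$, so $2\beta$ is the sum of $\lambda$ and the other long root on that line. Then
\[
c_t\bigl(w(2\beta-\lambda)\bigr)=2\,c_t(w\beta)-c_t(w\lambda),
\]
and if $c_t(w\lambda)\le -1$ while $c_t(w\beta)\ge 1$, the left side would be at least $3>m_t$, a contradiction. No parity of long-root coordinates is invoked, and the hypothesis $w\in W^n$ is never used.

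Your approach replaces the auxiliary root $2\beta-\lambda$ by the parity fact~(ii), namely $c_t(w\lambda)\in\{0,\pm 2\}$ for $t<n$. This is equivalent content (it is exactly the coordinate shadow of $2\beta=\lambda+\lambda'$), and it works just as cleanly. One remark: your first two paragraphs set up $w(\mu),w(\nu)\in\Phi^+$ via $w\in W^n$, but the third paragraph only uses the crude bound $|c_t(\cdot)|\le m_t$, which holds for \emph{any} root; the positivity of $w(\mu),w(\nu)$ is never called upon. So, as in the paper's proof, the hypothesis $w\in W^n$ is in fact superfluous for this lemma, and your setup could be trimmed accordingly.
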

\begin{proof}
Let us prove (\ref{sopra-destra1}). As we have noted above, $2 \be - \lambda
\in \Phi^+$. Since 
$$
(w (2\be - \lambda) , \breve\omega_t)= 2 (w (\be) , \breve\omega_t) -
(w (\lambda) , \breve\omega_t) \geq 2 (w (\be) , \breve\omega_t) +1,
$$
$(w (\be) , \breve\omega_t)$ cannot be positive otherwise $(w (2\be -
\lambda) , \breve\omega_t)$ would be $\geq 3$, which is impossible since
$m_t  \in \{1,2\}$ for type $C_n$.
\par

The proof of (\ref{sopra-destra2}) is analogous.
\end{proof}

\begin{thm}
\label{indotta_c}
Let
$\mathcal C^+$ denote the positive cone generated by $\Pi$. Then $\mathcal
P^+=\mathcal
C^+\cap
\mathcal P$
and the triangulation ${\mathcal T}$  of $\mathcal P$ restricts to a
triangulation of $\mathcal P^+$.
\end{thm}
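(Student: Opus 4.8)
The plan is to follow, step by step, the proof of the type-$A$ statement (Theorem~\ref{indotta}): I will prove that (1) the triangulation $\mc T$ restricts to a triangulation of $\mc C^+\cap\mc P$, and that (2) $\mc P^+=\mc C^+\cap\mc P$. Statement (2) will follow from (1) exactly as in type $A$: by (1), $\mc C^+\cap\mc P$ is a union of simplices of $\mc T$, and the vertices of any such simplex are roots or $\un 0$ lying in $\mc C^+$; since $\mc C^+\cap\Phi=\Phi^+$, they belong to $\mc P^+$, so $\mc C^+\cap\mc P\subseteq\mc P^+$, the reverse inclusion being trivial. The only difference from type $A$ is that the simplices now have short roots among their vertices, but short positive roots still lie in $\mc P^+$, so this causes no trouble.

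As in type $A$, statement (1) amounts to showing that, for all $w\in W^n$ and $I\in\mc I_\ab(\al_n)$, the simplex $\sigma=w(\conv_0(B(I)))$ is either contained in $\mc C^+$ or meets $\mc C^+$ in a null set. Since $\mc C^+=\{x\mid (x,\breve\omega_t)\ge 0\ \forall\, t\in[n]\}$ and $\un 0\in\mc C^+$, if $\sigma\not\subseteq\mc C^+$ there are $\gamma\in B(I)$ and $t\in[n]$ with $(w(\gamma),\breve\omega_t)<0$. It then suffices to prove the \emph{key claim}: $(w(\gamma'),\breve\omega_t)\le 0$ for every $\gamma'\in B(I)$. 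Indeed, granting this, all vertices of $\sigma$ lie in the halfspace $(x,\breve\omega_t)\le 0$, so $\sigma\cap\mc C^+\subseteq\sigma\cap\breve\omega_t^\perp$, a proper face of the full-dimensional simplex $\sigma$ (full-dimensional because $B(I)$ is a $\ganz$-basis of the root lattice by Proposition~\ref{unimodulareC}), hence a null set.

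To prove the key claim I would use the Young-diagram description of $B(I)$: it is a monotone path in the $C_n$ diagram that starts at $\al_{1,n}$ (position $(1,n)$), uses only downward and leftward steps, and ends at a long root $\lambda_k$ (position $(k,k)$); every root on it occupies a position $(i,j)$ with $1\le i\le k\le j\le n$, and $2\,\gamma_{(i,j)}=\lambda_i+\lambda_j$, the long roots at the two ends of its hook. Put $c(q)=(w(\lambda_q),\breve\omega_t)$, so that $(w(\gamma_{(i,j)}),\breve\omega_t)=\tfrac12(c(i)+c(j))$. Since $m_n=1$, Proposition~\ref{dispone} applies to $w\in W^n$ and, as $\lambda_1>\dots>\lambda_n$ is a chain in $M_n$, yields $w(\lambda_1)\ge\dots\ge w(\lambda_n)$, so $c$ is non-increasing; moreover, in the coordinate description of type $C_n$ a long root has $\breve\omega_t$-coordinate in $\{-2,0,2\}$ when $t<n$ and in $\{-1,1\}$ when $t=n$ (this is where the bound $m_t\le 2$ — the content of Lemma~\ref{sopra-destra} — intervenes). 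Writing $\gamma=\gamma_{(a,b)}$ with $a\le b$, from $c(a)+c(b)=2(w(\gamma),\breve\omega_t)<0$, $c(a)\ge c(b)$, and the restricted value set one deduces that $c(a)\le 0$ and that $c(b)$ is the minimal possible value ($-2$ if $t<n$, $-1$ if $t=n$). For an arbitrary $\gamma'=\gamma_{(i,j)}\in B(I)$ one then splits cases on its column: if $j\ge b$, then $c(j)$ is minimal and $c(i)+c(j)\le 0$; if $j<b$, monotonicity of the path forces $i\ge a$, whence $c(i)\le c(a)\le 0$ and, since $j\ge k\ge a$, also $c(j)\le c(a)\le 0$, so again $c(i)+c(j)\le 0$. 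In every case $(w(\gamma'),\breve\omega_t)=\tfrac12(c(i)+c(j))\le 0$, which proves the claim. The main obstacle is precisely this claim: in type $A$ one gets away with $|(\alpha,\breve\omega_t)|\le 1$, whereas here, for the coordinate directions with $m_t=2$, one must combine the monotonicity of $w$ along the chain of long roots, the values these coordinates take on long roots, and the monotone structure of the path $B(I)$ — and this interplay, rather than any individual step, is what type $C$ forces one to do by hand; the remaining verifications, being formal, parallel those of Section~\ref{tr-a}.
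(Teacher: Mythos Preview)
Your proof is correct and follows the same route as the paper: reduce to the key claim that $(w(\gamma'),\breve\omega_t)\le 0$ for all $\gamma'\in B(I)$, then establish it by combining the monotonicity of $w$ on $M$ (Proposition~\ref{dispone}), the hook relation $2\gamma_{(i,j)}=\lambda_i+\lambda_j$, and the bound $m_t\le 2$. Your packaging via the non-increasing function $c(q)=(w(\lambda_q),\breve\omega_t)$ taking values in $\{-2,0,2\}$ (or $\{-1,1\}$ when $t=n$) is precisely the content of Lemma~\ref{sopra-destra} rewritten in coordinates, so the two arguments differ only in presentation.
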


\begin{proof}
As for the $A_n$ analogue (Theorem \ref{indotta}), we shall prove the following
two statements, which give the result:
\begin{enumerate}
 \item the triangulation ${\mathcal T}$  restricts
to a
triangulation of $\mathcal C^+\cap
\mathcal P$,
\item $\mathcal
P^+=\mathcal C^+\cap
\mathcal P$.
\end{enumerate}
The first assertion is equivalent to requiring that every simplex of the
triangulation $\mathcal T$ is either contained in $\mathcal C^+$, or intersects
it in a null set. 
Hence, we must show that, for any  $w\in W^n$, and $T\in \mathcal T_n$, either
$w(T)\subseteq \mathcal C^+$, or $w(T)\cap \mathcal C^+$ has volume equal to
$0$.
\par

So, let $w\in W^n$, $T\in\mathcal T_n$, $T=\conv_0 (B(I))$  with $I\in \mc
I_\ab(\al_n)$,
be such that $w(T)\not\subseteq \mc C^+$. Then,
$w(B(I))\not\subseteq \Phi^+$, hence there exists $\gamma\in B(I)$ and $t \in
[n]$ such that $(w(\gamma), \breve\omega_t)<0$. We need to prove that, for all
$\gamma'\in B(I)$, we have that $(w (\gamma') , \breve\omega_t)\leq 0$.

For all the long roots $\lambda$ belonging to columns which are weakly to the right of the column of $\gamma$, we have $\lambda \leq \gamma$, and hence 
 $(w(\lambda) , \breve\omega_t)<0$, by Proposition \ref{dispone}. Therefore, Lemma \ref{sopra-destra},
(\ref{sopra-destra1}), implies that $(w (\gamma') , \breve\omega_t)\leq 0$ for all $\gamma'\in M$ belonging to columns weakly to the right of the column of $\gamma$.

Now let  $\lambda$ be the long positive root in
the row of $\ga $. By Lemma \ref{sopra-destra}, (\ref{sopra-destra2}), 
$(w(\lambda) , \breve\omega_t)\leq 0$. Hence Proposition \ref{dispone}
 implies that $(w (\gamma') , \breve\omega_t)\leq 0$ for all $\gamma'\in B(I)$ belonging to columns to the left of the column of $\gamma$.

The proof that $\mc P^+=\mc  P\cap \mc C^+$ is similar to that of the $A_n$
analogue in Theorem \ref{indotta}.
\end{proof}

As in the $A_n$ case, we obtain, as a corollary of Theorem \ref {tri_c}, the
fact that   
the triangulation $\mc T'_n$ of Proposition \ref{tri-n} inherits the poset structure of $\mc I_\ab(\al_n)$. 

\begin{cor}
Let $I\in \mc I_\ab(\al_n)$ be an abelian ideal. Then the set 
$$
\mc T'_I=\{\conv(B(J))\mid J\in \mc I_\ab(\al_n),\ J\subseteq I\}
$$ 
is a triangulation of $\conv(I)$. 
\end{cor}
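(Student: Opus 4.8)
The plan is to transcribe almost verbatim the argument that establishes the type $A_n$ analogue (the corollary following Theorem~\ref{indotta}), with Theorem~\ref{indotta_c} and Proposition~\ref{vecchiatfae} replacing their type $A_n$ counterparts. Since $\Phi$ is of type $C_n$ we have $m_n=1$, so Proposition~\ref{vecchiatfae} applies with $i=n$: the set $M\setminus(M\setminus I)=I$ is an abelian ideal and $M\setminus I\subseteq M$, hence there is $w\in W^n$ with $M\setminus I=\ov N(w)$, that is, $I=M\setminus\ov N(w)$. In particular, for every $\be\in M$ we have the chain of equivalences $\be\in I\iff\be\notin\ov N(w)\iff w(\be)\in\Phi^+$.

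The key combinatorial step is to decide, for each $J\in\mc I_\ab(\al_n)$, when the simplex $w(\conv(B(J)))=\conv(w(B(J)))$ is contained in $\mc P^+$. Its vertices are the roots $w(\be)$ with $\be\in B(J)\subseteq J\subseteq M$. A root lies in $\mc P^+$ exactly when it is positive (since $\mc P^+\subseteq\mc C^+$ and $\mc C^+$ contains no negative root), and $\mc P^+$ is convex, so $w(\conv(B(J)))\subseteq\mc P^+$ if and only if $w(\be)\in\Phi^+$ for all $\be\in B(J)$, i.e.\ $B(J)\subseteq I$ by the equivalence above; otherwise the simplex has a vertex outside $\mc P^+$ and so is not contained in it. Since $J$ is the dual order ideal generated by $B(J)$ and $I$ is a dual order ideal, $B(J)\subseteq I$ is equivalent to $J\subseteq I$. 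Thus $w(\conv(B(J)))\subseteq\mc P^+\iff J\subseteq I$.

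Now I would invoke Theorem~\ref{indotta_c}: the triangulation $\mc T=W^n\mc T_n$ restricts to a triangulation of $\mc P^+$, and by Proposition~\ref{tri-n} the set $\{w(T)\mid T\in\mc T'_n\}$ triangulates the facet $w(F)$ of $\mc P$. Hence $w(F)\cap\mc P^+$ is the union of those $w(T)$, $T\in\mc T'_n$, that lie in $\mc P^+$, and by the previous paragraph these are exactly the $w(\conv(B(J)))$ with $J\subseteq I$. Applying the linear map $w^{-1}$, which carries a triangulation to a triangulation, we get that $\mc T'_I=\{\conv(B(J))\mid J\in\mc I_\ab(\al_n),\ J\subseteq I\}$ is a triangulation of $F\cap w^{-1}(\mc P^+)$. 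Finally I would identify this set with $\conv(I)$: since $I\subseteq M$ we have $\conv(I)\subseteq F$, and since $w(I)\subseteq\Phi^+\subseteq\mc C^+$ we get $\conv(I)=w^{-1}(\conv(w(I)))\subseteq w^{-1}(\mc C^+)$; using $w^{-1}(\mc P^+)=w^{-1}(\mc P)\cap w^{-1}(\mc C^+)=\mc P\cap w^{-1}(\mc C^+)$ and $F\subseteq\mc P$ this gives $\conv(I)\subseteq F\cap w^{-1}(\mc P^+)$, while the reverse inclusion is immediate because each $\conv(B(J))$ with $J\subseteq I$ has all its vertices in $I$. So $F\cap w^{-1}(\mc P^+)=\conv(I)$ and $\mc T'_I$ triangulates it.

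The step I expect to require the most care is the claim that $w(F)\cap\mc P^+$ is \emph{exactly} the union of the simplices $w(\conv(B(J)))$ with $J\subseteq I$: this rests on the dichotomy contained in the proof of Theorem~\ref{indotta_c}, namely that each simplex of $\mc T$ is either contained in $\mc C^+$ or meets it in a set of measure zero, so that a point of $w(F)\cap\mc P^+$ in the relative interior of some $w(\conv(B(J)))$ forces the whole simplex into $\mc P^+$. Everything else is a routine adaptation of the type $A_n$ proof; the only feature peculiar to type $C_n$ is that the simplices $\conv(B(J))$ may carry short roots as vertices, but since $B(J)\subseteq M$ throughout, the positivity criterion $w(\be)\in\Phi^+$ is untouched.
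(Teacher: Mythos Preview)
Your proposal is correct and follows essentially the same approach as the paper, which simply says the result is obtained ``as in the $A_n$ case'' and refers back to the corollary after Theorem~\ref{indotta}. You in fact supply more detail than the paper does: the explicit identification $F\cap w^{-1}(\mc P^+)=\conv(I)$ and the justification, via the separating-hyperplane dichotomy from the proof of Theorem~\ref{indotta_c}, that $w(F)\cap\mc P^+$ is exactly the union of the $w(\conv(B(J)))$ with $J\subseteq I$, are both left implicit in the paper's treatment.
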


Note that our triangulation of type $C$ coincides with the triangulation
studied in \cite{K2} (in the usual coordinate description) on the cone on $F$, but not on
$\mc P^+$. In fact, we can check directly that our minimal paths from
$\al_{1, n}$ to a long root $\lambda_j$ ($j\in [n])$ correspond to the {\it
alternating well structured graphs of \cite{K2}} in which all edges are
positive. But, for example, in type $C_4$, $(\varepsilon_1 - \varepsilon_3, \varepsilon_2 -
\varepsilon_3, \varepsilon_2 + \varepsilon_4, 2 \varepsilon_4)$, which gives
rise to a graph which is not alternating well structured, is the image of 
$(\varepsilon_1 + \varepsilon_4, \varepsilon_2 +
\varepsilon_4, \varepsilon_2 + \varepsilon_3, 2 \varepsilon_3)$ under the action of
$w=s_{\alpha_3} s_{\alpha_4} \in W^4$.

\section{Some remarks on volumes}
\label{volume}
In this section, we show how the curious 
identity of \cite{DC-P} holds
also for the triangulations studied in Sections \ref{tr-a} and \ref{tr-c}.
\par

Let us set
$$
\vol(\Pi):=\vol(\conv_0(\Pi)).
$$   
If $B$ is any 
$\ganz$-basis of the root lattice,  then $\vol(\conv_0(B))=\vol(\Pi)$, since the linear transformation that maps
$\Pi$ to $B$ and its inverse are both integral and hence have determinant $1$ or
$-1$.
\par

By Propositions \ref{unimodulare} and \ref {unimodulareC}, it follows that
the total number of simplices in the  triangulations of Theorems \ref{tri} and
\ref{tri_c} is equal to 
$$
\frac{\vol(\mc P)}{\vol(\Pi)},
$$ 
where
$\vol({\mc P})$ is the volume of $\mc P$.  
Hence,
$\frac {\vol(\mc
P_{A_n})}{\vol(\Pi_{A_n})}=\binom{2n}{n}$ 
and
$\frac{\vol(\mc P_{C_n})}{\vol(\Pi_{C_n})}=2^{2n-1}.$
\par

By \cite[Theorem 3.11, Lemma 3.12, and Table 4]{FZ}, a
triangulation of $\mc P^+$ made of simplices generated by integral
bases has 
$$
\prod_{i=1}^n \frac{h+e_i-1}{e_i+1}
$$ 
elements, where $h$ is the Coxeter number and the $e_i$ are the exponents of
$\Phi$. For $\Phi$ of type $A_n$ this number specializes to the $n$th  Catalan
number $\frac{1}{n+1}\binom{2n}{n}$, and for type $C_n$ to $\binom{2n-1}{n}$.
For type $A_n$, we could already find this number in \cite{GGP}.    
\par

Thus we obtain that
$\frac{\vol(\mc P_{A_n}^+)} {\vol(\mc
P_{A_n})}=\frac{1}{n+1},$ and $\frac{\vol(\mc P_{C_n}^+
)}{\vol(\mc
P_{C_n})}=\frac{1}{2^{2n-1}}\binom{2n-1}{n}=\frac{1}{2^{2n}}\binom{2n}{n}.$
We can easily check that in both cases we have obtained that  
$$
\frac{\vol(\mc P^+)} {\vol(\mc
P)}=\frac{\prod_{i=1}^n e_i}{|W|}.
$$

This result is formally similar to the following one, first proved by De Concini
and Procesi \cite{DC-P},  that holds for all finite
crystallographic root systems (see also \cite{D}, \cite{BZ}, \cite{W}). 
Let $S$ be a sphere centered
at the origin. Then
$$
\frac{\vol(\mc C^+\cap S)} {\vol(S)}=\frac{\prod_{i=1}^n
e_i}{|W|},
$$ 
where, as before, $\mc C^+$ is the positive cone generated by $\Pi$.

\par

Indeed, we can see that the analogous equality holds for the orbit of each facet
of $\mc P$, in case $A_n$ and, trivially, in case $C_n$.
\par
 
Assume that $\Phi$
is of type $A_n$. For any $i\in [n]$, set 
$$
\wt F_i:=\conv_0(F_i).
$$ 
Consider the union of the sets in the orbit of $\wt F_i$ 
$$
W\wt F_i=\{w(x)\mid w\in W,\ x\in \wt F_i\}
$$ 
and the sets of simplices
$$
W^i \mc T_i=\{w(S)\mid w\in W^i,\ S\in \mc T_i\}.
$$ 
By Theorem \ref{indotta},  $W^i \mc T_i\cap \mc P^+$ is a
triangulation of 
$W \wt F_i \cap \mc P^+$. We have the following result.

\begin{pro}
Assume that $\Phi$ is of type $A_n$.
For each $i\in [n]$, 
$$
\frac {|W^i \mc T_i \cap \mc P^+|}{|W^i \mc
T_i|}=\frac{1}{n+1}.
$$
\end{pro}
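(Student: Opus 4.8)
The plan is to rewrite the statement as an equality of volumes and then average over the cyclic subgroup of $W=S_{n+1}$. As recalled just before the statement, $W^i\mc T_i$ triangulates $W\wt F_i$ and $W^i\mc T_i\cap\mc P^+$ triangulates $W\wt F_i\cap\mc P^+$ (Theorems \ref{tri} and \ref{indotta}); by Proposition \ref{unimodulare}, together with the fact (noted before Theorem \ref{tricomune}) that the vertex set of each simplex of $\mc T$ is $\un 0$ together with a $\ganz$-basis of the root lattice, every simplex occurring in these two triangulations has volume $\vol(\Pi)$. Hence $|W^i\mc T_i|=\vol(W\wt F_i)/\vol(\Pi)$ and $|W^i\mc T_i\cap\mc P^+|=\vol(W\wt F_i\cap\mc P^+)/\vol(\Pi)$, so the Proposition is equivalent to $\vol(W\wt F_i\cap\mc P^+)=\tfrac1{n+1}\vol(W\wt F_i)$. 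Since $\wt F_i=\conv_0(F_i)\subseteq\mc P$ and $\mc P$ is $W$-stable, $W\wt F_i\subseteq\mc P$, so $W\wt F_i\cap\mc P^+=W\wt F_i\cap\mc P\cap\mc C^+=W\wt F_i\cap\mc C^+$, and it suffices to prove
$$
\vol(W\wt F_i\cap\mc C^+)=\tfrac1{n+1}\,\vol(W\wt F_i).
$$

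For this I would use the standard coordinate model: $\gen\,\Phi=V=\{x\in\real^{n+1}\mid \sum_a x_a=0\}$, $W=S_{n+1}$ acts by permuting coordinates, and $\al_j=\varepsilon_j-\varepsilon_{j+1}$. Writing $x=\sum_j t_j\al_j$ one gets $t_j=x_1+\cdots+x_j$, whence
$$
\mc C^+=\{x\in V\mid x_1+\cdots+x_j\ge0\ \text{ for all }j\in[n]\}.
$$
Let $\rho\in W$ be the cyclic shift $\varepsilon_a\mapsto\varepsilon_{a+1}$. The crucial point is the \emph{cycle lemma}: the $n+1$ cones $\rho^0(\mc C^+),\dots,\rho^n(\mc C^+)$ cover $V$ and any two of them meet in a set of measure zero. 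To prove it I would fix $x\in V$, put $S_0=0$ and $S_m=x_1+\cdots+x_m$ ($m\in[n]$), note that $S_{n+1}=0$ so the $S_m$ extend periodically modulo $n+1$, observe that the partial sums of $\rho^k(x)$ are $S_m-S_{m_0}$ with $m$ running over a complete residue system modulo $n+1$ and $m_0$ depending bijectively on $k$, and conclude that $\rho^k(x)\in\mc C^+$ exactly when $S_{m_0}=\min\{S_0,\dots,S_n\}$; outside the measure-zero union of the hyperplanes $\{S_j=S_k\}$ this minimum is attained at a unique index, giving exactly one admissible $k$.

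Once the cycle lemma is established the conclusion is a one-line averaging: $W\wt F_i$ is $W$-stable, hence $\rho$-stable, and for every $k$ we have $\vol\!\big(W\wt F_i\cap\rho^k(\mc C^+)\big)=\vol(W\wt F_i\cap\mc C^+)$, because $\rho^{-k}$ is a volume-preserving linear automorphism of $V$ fixing $W\wt F_i$ and carrying $\rho^k(\mc C^+)$ onto $\mc C^+$. Summing over $k=0,\dots,n$ and using that the $\rho^k(\mc C^+)$ tile $V$ gives $\vol(W\wt F_i)=(n+1)\,\vol(W\wt F_i\cap\mc C^+)$, as desired.

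The main obstacle, and essentially the only nontrivial ingredient, is the cycle lemma — more precisely, recognizing that the $n+1$ rotates of the positive cone $\mc C^+$ tile the ambient space $V$ and that $\rho$ permutes the partial sums of a vector cyclically. The remaining steps are the routine reduction to a volume statement (where one must only be careful that counting simplices computes volume, which is Proposition \ref{unimodulare}) and the averaging over $\langle\rho\rangle\subseteq W$.
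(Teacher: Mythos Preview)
Your proof is correct and follows essentially the same strategy as the paper: reduce the counting statement to a volume statement, tile $V$ by $n+1$ congruent cones each a $W$-translate of $\mc C^+$, and average. In fact your cones and the paper's coincide: a direct check shows $\rho(\Pi)=\{-\theta\}\cup\Pi\setminus\{\al_1\}$ and more generally $\rho^k(\Pi)=\{-\theta\}\cup\Pi\setminus\{\al_k\}$ for $k\in[n]$, so your rotates $\rho^k(\mc C^+)$ are precisely the cones $\mc C_{\Pi'}^+$ with $\Pi'\in\{\Pi\}\cup\{\{-\theta\}\cup\Pi\setminus\{\al_k\}:k\in[n]\}$ that the paper uses. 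The only genuine difference is that the paper cites \cite{DC-P} for the fact that these $n+1$ cones tile $V$, whereas you supply a self-contained proof via the cycle lemma on partial sums; your argument is thus slightly more elementary and independent of the literature, at the cost of being specific to type $A$ (the paper's cited tiling holds for all types).
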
 

\begin{proof}
For any basis $\Pi'$ of the root system $\Phi$, let $\mc C_{\Pi'}^+$ denote the positive cone
generated by $\Pi'$. 
Since $W$ is a group of isometries and is
transitive on the set of root system bases of $\Phi$,  for any fixed standard  parabolic
facet $F_i$ we have that  
$$
\vol(W\tilde F_i \cap \mc C_{\Pi'}^+)=\vol(W\tilde F_i \cap \mc C^+)=
\vol(W\tilde F_i \cap \mc P^+)
$$
for all bases $\Pi'$ of $\Phi$. 
\par

It is easy to see that, for any finite crystallographic irreducible $\Phi$, 
$\gen \, \Phi$ is the union of the $n+1$ positive cones generated by
the sets $\Pi$ and $ \{-\theta\} \cup \Pi \setminus \{\al_k\}$, for all $k\in
[n]$, and that, moreover, these cones have pairwise null intersections (a proof 
can be found in \cite{DC-P}).

If $\Phi$ is of type $A_n$, the set $ \{-\theta\} \cup \Pi \setminus \{\al_k\}$
is a basis of $\Phi$, for  all $k\in
[n]$, and hence the volume of the intersection of $W\wt F_i$ with each of these cones is the same and is equal to $\frac{1}{n+1}\vol(W\wt F_i\cap \mc P)$, so that 
$$
\vol(W\wt F_i\cap \mc P^+)=\frac{1}{n+1}\vol(W\wt F_i\cap \mc P).
$$
Since all the simplices in $\mathcal T$ have the same volume,
and both $W\wt F_i\cap \mc P^+$  and $W\wt F_i\cap \mc P$ are union of simplices in $\mc T$, this proves the claim.
\end{proof}

\section*{Acknowledgments}
We are grateful to the referees for carefully reading our manuscript and for giving many helpful suggestions for improving the article. 
\par
Part of this work was done when the second author was at Universit\`a di Chieti-Pescara, whose support is gratefully acknowledged.

\end{document}